\def\baselinestretch{1.0}
  \newtheorem{assumption}{Assumption}[section]
\newcommand{\al}{\alpha}
\newcommand{\fy}{\varphi}
\def\Dal{{\partial_t^\al}}
\def\bDal{{\bar\partial_\tau^\al}}
\def\d{{\rm d}}
\def\al{\alpha}
\def\fy{\varphi}
\def\Om{\Omega}
\def\Dal{{\partial^\alpha_t}}
\def\dH#1{\dot H^{#1}(\Omega)}
\def\A{\mathcal{A}}
\def\lb{\left(}
\def\rb{\right)}
\def\lsb{\left[}
\def\rsb{\right]}
\def\lnorm{\left\|}
\def\rnorm{\right\|}
\def\II{{(\Omega)}}
\theoremstyle{plain}
\newtheorem{theorem}{Theorem}[section]
\newtheorem{remark}{Remark}[section]
\newtheorem{lemma}{Lemma}[section]
\numberwithin{equation}{section}
\titleformat{\section}{\vskip10pt\large\bfseries}{\thesection.}{0.5em}{\centering\vspace{5pt}}
\titleformat{\subsection}{\vskip10pt\normalsize\bfseries}{\thesubsection.}{0.5em}{}
\def\d{{\rm d}}
\def\al{\alpha}
\def\Dal{{\partial^\alpha_t}}
\def\dH#1{\dot H^{#1}(\Omega)}
\def\II{(\Omega)}
\newcommand{\ds}{\mathrm ds}
\newcommand{\dx}{\mathrm dx}
\newcommand{\dz}{\mathrm dz}
\renewcommand{\d}{{\,\rm d}}
\def\Dal{{\partial_t^\al}}
\def\Om{\Omega}
\def\II{(\Om)}
\def\dH#1{\dot H^{#1}(\Omega)}
\def\bDal{\bar\partial_\tau^\alpha}
\def \contour{{\Gamma_{\theta,\sigma}}}
\def\L2Om{{L^2(\Omega)}}
\def\T{\mathcal{T}}
\def\Ih{\mathcal{I}_h}
\def\Ih{I_h}
\begin{document}

\title[Simultaneous Reconstruction of Initial Condition and Potential]{Numerical Analysis of Simultaneous Reconstruction of Initial Condition and Potential in Subdiffusion}

\author[Xu Wu]{$\,\,$Xu Wu$\,$}
\address{Department of Applied Mathematics,
The Hong Kong Polytechnic University, Kowloon, Hong Kong}
\email {polyuxu.wu@polyu.edu.hk}

\author[Jiang Yang]{$\,\,$Jiang Yang$\,$}
\address{Department of Mathematics, SUSTech International Center for Mathematics \& National Center for Applied Mathematics Shenzhen (NCAMS), Southern University of Science and Technology, Shenzhen, China }
\email {yangj7@sustech.edu.cn}

\author[Zhi Zhou]{$\,\,$Zhi Zhou$\,$}
\address{Department of Applied Mathematics,
The Hong Kong Polytechnic University, Kowloon, Hong Kong}
\email {zhizhou@polyu.edu.hk}

\keywords{fractional diffusion, parameter identification, terminal observation, finite element method, conditional stability, iterative algorithm, error estimate}

\begin{abstract}
This paper investigates the simultaneous identification of a spatially dependent potential and the initial condition in a subdiffusion model based on two terminal observations. The existence, uniqueness, and conditional stability of the inverse problem are established under weak regularity assumptions through a constructive fixed-point iteration approach.
The theoretical analysis further inspires the development of an easy-to-implement iterative algorithm. A fully discrete scheme is then proposed, combining the finite element method for spatial discretization, convolution quadrature for temporal discretization, and the quasi-boundary value method to handle the ill-posedness of recovering the initial condition. Inspired by the conditional stability estimate, we demonstrate the linear convergence of the iterative algorithm and provide a detailed error analysis for the reconstructed initial condition and potential.
The derived \textsl{a priori} error estimate offers a practical guide for selecting regularization parameters
and discretization mesh sizes based on the noise level. Numerical experiments are provided to illustrate and support our theoretical findings.
\end{abstract}

\maketitle

\renewcommand{\baselinestretch}{0.95}
\setlength\abovedisplayskip{4.0pt}
\setlength\belowdisplayskip{4.0pt}

\section{Introduction}
This work aims to investigate the identification of a spatially dependent potential and the initial condition in a subdiffusion model  with a space-dependent
potential and its rigorous numerical analysis. Let $\Omega\subset\mathbb{R}^d$ ($d=1,2,3$) be a convex
polyhedral domain with a boundary $\partial\Omega$. Fixing $T>0$ as the final time, we  consider the following  initial-boundary  value problem for the  diffusion model with $\alpha\in(0,1)$:
 \begin{equation}\label{eqn:pdeo}
 \begin{cases}
  \begin{aligned}
     \Dal U(x,t) - \Delta U(x,t) +q(x) U(x,t)&=f(x), &&(x,t)\in \Omega\times(0,T],\\
    U(x,t)&=b(x),&&(x,t)\in \partial\Omega\times(0,T],\\
    U(x,0)&=v(x),&&x\in\Omega,
  \end{aligned}
  \end{cases}
 \end{equation}
where $v$ denotes the initial condition, $b$ and $f$ are space-dependent boundary data and source term, respectively.
The function $q$ refers to the radiativity or
reaction coefficient or potential, dependent on the specific applications.
Throughout, we assume that the potential $q$ is space-dependent.
The fractional order $\alpha \in (0,1)$ is fixed, and $\partial_t^{\alpha} U$ denotes the Djrbashian--Caputo fractional derivative of order $\alpha$ in time, defined by \cite[Definition~2.3]{Jin:book2021}
\begin{equation*}
    \partial_t^{\alpha} U(t)
    = \frac{1}{\Gamma(1 - \alpha)}
    \int_0^t (t-s)^{-\alpha} \partial_s U(s)\, {\rm d}s,
\end{equation*}
where $\Gamma(z) = \int_0^{\infty} s^{z-1} e^{-s}\, {\rm d}s$ for $\Re(z) > 0$ is Euler's Gamma function.
For sufficiently smooth $U$, $\partial_t^{\alpha} U$ reduces to the classical derivative $\partial_t U$ as $\alpha \to 1^-$.

 The model \eqref{eqn:pdeo} is commonly used to describe subdiffusive dynamics, where the mean squared displacement (MSD) grows slower than in normal diffusion, following a power-law $t^\alpha$ instead of linear growth. Subdiffusive processes frequently arise in biological systems, such as the motion of proteins or organelles in crowded cellular environments, in geophysics, where fluid flow in porous media deviates from normal diffusion due to complex structures, and in pollutant dispersion in natural environments, where heterogeneous landscapes or obstacles slow down the spread of contaminants. Additionally, the model is used in materials science for heat conduction in fractal or memory-dependent materials, and in finance to describe anomalous asset price dynamics. For a comprehensive overview of the derivation of these mathematical models and their diverse applications, we refer readers to \cite{MetzlerKlafter:2000,Du:book,Jin:book2021}.

In this paper, we focus on the inverse problem  associated with the subdiffusion model \eqref{eqn:pdeo}, aiming to recover the unknown potential term $q^\dag(x)\in L^\infty(\Omega)$ and initial condition $ v^\dag\in L^2\II$ 
 from two terminal observations:
\begin{equation}\label{eqn:ob_ex}
   U(x,T_1; q^\dag,v^\dag)=g_1(x),\quad  U(x,T_2; q^\dag,v^\dag)=g_2(x)\quad \mbox{in }\Omega.
\end{equation}
Here we denote the solution to \eqref{eqn:pdeo} corresponding to the potential $q$ and initial condition $v$ by $U(x,t;q,v)$.
In practice, observational data often contains noise. In this work, we consider the empirical observational data $g_1^\delta$ and $g_2^\delta$ satisfying
\begin{align*}
   \|g_1^\delta - g_1\|_{L^2\II}\le\delta \quad \text{and}\quad \|g_2^\delta - g_2\|_{ L^2\II} \le \delta , 
\end{align*}
where $\delta$ denotes the noise level.
The primary objectives of this study are to examine the solvability of the inverse problem for the identification of the initial condition and potential, to develop a robust numerical scheme for its solution, and to derive an error estimate for the numerical reconstruction of both the initial condition and potential. The derived error estimate will serve as a fundamental criterion for the selection of optimal discretization parameters, including the spatial mesh size, temporal step size, and regularization parameter, thereby enhancing the accuracy and efficiency of the numerical method.

The inverse potential problem arises in various practical contexts, where
$q^\dag$ typically represents the radiativity coefficient in heat conduction \cite{YZ:2001} or the perfusion coefficient in Pennes' bio-heat equation in human physiology \cite{Pennes:1948}.
Theoretical investigations of this problem in diffusion equations with final-time observational data have a long history; see, for instance, \cite{Isakov:1991, ChoulliYamamoto:1996, ChoulliYamamoto:1997, ChenJiangZou:2020, KlibanovLiZhang:2020} and the references therein.  
In \cite{Isakov:1991}, Isakov established the uniqueness and (conditional) existence of solutions for the parabolic inverse potential problem by developing a unique continuation principle and a constructive fixed-point iteration scheme. Zhang and Zhou adopted a similar strategy in \cite{ZhangZhou:2017} for a one-dimensional time-fractional subdiffusion model. They used the spectrum perturbation argument (\cite[Lemma 2.2]{ZhangZhou:2017} and \cite{Trubowitz:1987}) to prove that the fixed point iteration is a contraction, leading to immediate conclusions about uniqueness and existence. Their subsequent work \cite{zhang2022identification} generalized these results to higher dimensional cases, establishing conditional stability in Hilbert spaces under suitable assumptions on the problem data, and further providing an error analysis for finite element reconstructions with smooth data and noise.  
For small time $T$, the conditional stability results have been established in \cite{ChoulliYamamoto:1996,ChoulliYamamoto:1997} for normal diffusion and in  \cite{JinZhou:IP2021-a} for the case where $\alpha \in (0,1)$.
Kaltenbacher and Rundell \cite{KaltenbacherRundell:2019a} demonstrated the invertibility of the linearized map (of the direct problem) from the space $L^2(\Omega)$ to $H^2(\Omega)$ using a Paley--Wiener type argument and a strong maximum principle.
In \cite{KR:2020}, they explored the simultaneous recovery of several parameters from overposed data consisting of $u(T)$. 
Chen et al.~\cite{ChenJiangZou:2020} studied the parabolic inverse potential problem with observational data on $[T_0, T_1]\times\Omega$, proving conditional stability in negative Sobolev spaces.
More recently, Jin et al.~\cite{JLQZ:2021} considered the same data configuration and established a weighted $L^2$ stability estimate, which implies a H\"older-type stability in the standard $L^2$ norm under a suitable positivity condition.
For additional studies on the inverse potential problem for (sub)diffusion models with various types of observational data, see \cite{KianYamamoto:2019, MillerYamamoto:2013, KR:2020-b} and the references therein. 

The backward subdiffusion problem, which involves recovering the initial data from an over-posed terminal measurement, has garnered considerable attention in recent literature. For the case of $\alpha = 1$, it is well-known that the parabolic backward problem is severely ill-posed. However, in the subdiffusion case with $\alpha \in (0,1)$, the backward problem is only mildly ill-posed, as highlighted in the pioneering work of Sakamoto and Yamamoto \cite{sakamoto2011initial}. This result has inspired extensive research on the development and analysis of regularization methods for solving the backward subdiffusion problem \cite{liu2010backward, wang2013total, wei2014modified, yang2013solving, wei2019variational, wu2025numerical}.
Despite these theoretical advances, studies on numerical discretization and rigorous error analysis remain relatively limited. Zhang etal.\cite{zhang2020numerical} analyzed a fully discrete scheme for the backward problem and later extended their approach to handle time-dependent coefficients using a perturbation argument \cite{zhang2023stability}, as well as nonlinear models via a fixed-point iteration argument \cite{wu2025numerical}. Furthermore, it is noteworthy that the fractional backward model ($\alpha \in (0,1)$) can act as a regularization for the classical backward heat problem ($\alpha = 1$), as shown in \cite{KaltenbacherRundell:2019, KaltenbacherRundell:2025}.



In this study, we investigate the inverse problem of identifying the potential and initial condition in the model described by equation \eqref{eqn:pdeo} using two terminal observations. This problem is particularly challenging due to the increased complexity of the coefficients-to-state map when compared to the case of a single coefficient, and the decoupling method remains ill-defined. We propose a fixed-point iterative algorithm to recover the initial data and potential. Specifically, we define an operator \( K \) derived from the PDE \eqref{eqn:pdeo} as follows:
\[
 K\psi(x) = \frac{f(x) - \Dal U(x,T_2;\psi,B(\psi,g_1)) + \Delta g_2(x)}{g_2(x)},
\]
where \( B(\cdot, \cdot) \) denotes the backward operator, with \( B(\psi,g) \) representing the initial condition corresponding to the observational data \( g \) at \( T_1 \) with potential \( \psi \). We observe that the exact potential \( q^\dag \) is a fixed point of \( K \) for the observations described in \eqref{eqn:ob_ex}.
We demonstrate that for a fixed \( T_1 \), there exists a \( T_0 \) such that for all \( T_2 \ge T_0 \), the operator \( K \) becomes a contracting mapping, ensuring the existence of a unique fixed point. 
Moreover, this novel argument also yields a Lipschitz-type stability result in Hilbert spaces (see Theorem \ref{thm:cond-stab})
\begin{equation}\label{eqn:stab-intro}
\begin{split}
\| q_1 - q_2 \|_{L^2\II} + \| v_1- v_2\|_{L^2\II} \le C \sum_{i=1}^2 \|  U(T_i;q_1, v_1) - U(T_i;q_2, v_2) \|_{H^2\II},
\end{split}
\end{equation}
where the proof heavily relies on the smoothing properties and asymptotics of the solution operators. 
This conditional stability plays an essential role in the numerical analysis of our reconstruction algorithm with fully discretization in space and time. It is important to note that Yamamoto and Zou, in \cite{YZ:2001}, studied the simultaneous recovery of the initial data and potential using observations taken at one terminal time and a space-time subdomain, relying on the Carleman estimate. In contrast, our paper adopts entirely different approaches that are more readily adaptable to numerical analysis.

The stability estimate \eqref{eqn:stab-intro} implies that the inverse problem is ill-posed, exhibiting a loss equivalent to
a second-order derivative. Therefore, regularization techniques, particularly Tikhonov regularization, are designed to address this ill-posedness \cite{EnglKunischNeubauer:1989, YZ:2001}. In practical computations, it is necessary to discretize the continuous regularized formulation, which introduces discretization errors. However, obtaining convergence rates for discrete approximations is generally very challenging due to the nonconvexity of the regularized function, which arises from the high degree of nonlinearity in the parameter-to-state map. 
In this work, we apply the quasi-boundary
value method to handle the ill-posedness of the backward problem and propose an easy-to-implement iterative solver to recover both initial data and potential. In particular, we discretize the continuous problem \eqref{eqn:pdeo} using the Galerkin finite element method with conforming piecewise linear finite elements for space discretization, and the convolution quadrature generated by the backward Euler method (CQ-BE) for time discretization. We prove that, with properly chosen $T_1$ and $T_2$, the iteration generates a sequence that converges linearly to a fixed point $q^*$ and the numerical reconstructions satisfy the following \textsl{a priori} error estimate (cf. Theorem \ref{thm:err-fully}):
\begin{equation*}
\begin{split}
\| q^* - q^\dag \|_{L^2\II} +  & \lnorm U_{\gamma,h}^{\delta,0}(q^*)-v^\dag\rnorm_{L^2\II}\\ &\le c\left(\gamma^{-1}\delta+\gamma^{\frac{s_1}{2}}+\gamma^{-1} h^2+ \tau +\delta H^{-\frac{3}{2}} \bar H^{-\frac{1}{2}}+\bar H^{\frac{3}{2}}  H^{-\frac{3}{2}} + H^{\min\{s_1,s_2\}}\right), 
\end{split}
\end{equation*}
where $v^\dag \in H^{s_1}\II$ and $q^\dag \in H^{s_2}\II$, with constants $s_1, s_2 \in (0,2]$. Here, $\gamma$ is the regularization parameter, $h$ denotes the spatial mesh size, and $\tau$ represents the time step size. The additional spatial mesh sizes, $H$ and $\bar{H}$, arise from the two-grid algorithm used to approximate the Laplacian of noisy data. 
The \textsl{a priori} error estimate provides practical guidance for selecting regularization and discretization parameters $\gamma$, $h$, $H$, $\bar{H}$, and $\tau$ according to the noise level $\delta$. By selecting the parameters as  
\[
\gamma \sim \delta^\frac{2}{2+s_1}, \quad h \sim \delta^{\frac{1}{2}},  \quad H \sim \delta^\frac{3}{6+4\min\{s_1,s_2\}}, \quad \text{and} \quad \bar{H} \sim \delta^{\frac{1}{2}}, \quad \tau \sim \delta^\frac{s_1}{2+s_1},
\]  
one can achieve reconstructions with an optimal error of order 
$O(\delta^\frac{3\min\{s_1,s_2\}}{6+4\min\{s_1,s_2\}})$, which is fully supported by numerical experiments. The proposed numerical scheme and error analysis not only rely on the numerical analysis for the direct problem with minimal regularity \cite{JinZhou:2023book} but are also strongly motivated by the conditional stability analysis, emphasizing the connection between theoretical insights and numerical implementation. Note that the current results significantly improve upon those in earlier work, such as \cite[Theorem 4.12]{zhang2022identification}, which provided a suboptimal error estimate but required highly smooth problem data, globally continuous observational data, and uniform rectangular or triangular meshes (see Remark \ref{rem:reg-err}  and Figure \ref{rateQP1_Conti} for more discussion). 


The remainder of the paper is organized as follows. Section \ref{sec:well_pos} presents preliminary results and establishes the existence and uniqueness of the reconstruction for both the initial condition and potential.
Then in Section \ref{sec:regu}, we investigate the regularization and convergence analysis for the backward problem.
The numerical reconstruction with a fully discretization scheme is developed and analyzed in Section~\ref{sec:fully_scheme},
where we show the linear convergence of the iterative algorithm and
establish \textsl{a priori} error estimates (in terms of discretization parameters and noise level) for the reconstructed potential and initial condition.
Finally, we present some
two-dimensional numerical results to illustrate and support our theoretical findings in Section \ref{sec:num}.  

We now conclude with some useful notations.
For any $k\geq 0$ and $p\geq1$, the
space $W^{k,p}(\Omega)$ denotes the standard Sobolev spaces of the $k$-th order, and we write $H^k(\Omega)$,
when $p=2$. The notation $(\cdot,\cdot)$ denotes the $L^2(\Omega)$ inner product.
 Throughout, the notations $c$ and $C$, with or without a subscript, denote generic constants
that may change at each occurrence, but they are always independent of the regularization parameter $\gamma$, spatial mesh size $h, H, \bar{H}$, temporal step size $\tau$, and noise level $\delta$.

\section{Uniqueness and conditional stability of the inverse problem
}\label{sec:well_pos} 
The aim of this section is to establish the well-posedness of the inverse potential problem and backward problem for the subdiffusion equation \eqref{eqn:pdeo}, specifically addressing the conditional stability of the reconstruction of the initial condition and potential from two terminal observations.

\subsection{Preliminary} 
First,  by splitting the  solution to  $U(x)=u(x)+D(x)$ in the model \eqref{eqn:pdeo}, with $D(x)$ satisfying
\begin{equation}\label{eqn:pdedx}
    -\Delta  D(x)=0,\ x\in\Omega,\  D(x)=b(x), \ x\in \partial\Omega,
\end{equation}
we can observe that $u(x)$ solves:
  \begin{equation}\label{eqn:pde}
 \begin{cases}
  \begin{aligned}
     \Dal u(x,t) - \Delta u (x,t) +q(x) u(x,t)&=f(x)-q(x)D(x), &&(x,t)\in \Omega\times(0,T],\\
    u(x,t)&=0,&&(x,t)\in \partial\Omega\times(0,T],\\
    u(x,0)&=v(x)-D(x)=:\bar v(x),&&x\in\Omega.
  \end{aligned}
  \end{cases}
 \end{equation}

Next, we collect some preliminary settings for the controllable
conditions 
\begin{equation}\label{eqn:cond1}
f \in L^2\II, \quad  D\in  L^\infty \II,\quad   D(x)\ge M>0, ~~\text{ a.e. in } \Omega
\end{equation}
and the (unknown) exact potential $q^\dag\in L^\infty\II$ and (unknown) initial condition  $v^\dag\in L^2\II$. Throughout,
we assume that the exact potential  $q^\dag$  belongs to the following admissible set 
\begin{equation}\label{admissible_q}
 q^\dag \in \A=\{q\in L^\infty\II: 0\le q\le m\}.
\end{equation}
Now we recall the maximum principle of the time-fractional diffusion model~\eqref{eqn:pde}, which has been established in \cite{luchko2017maximum}.
\begin{lemma}\label{positive}
 Let   $u$  be the solution of \eqref{eqn:pde} with potential $q\in \A$ and  $\bar v,\ f\in L^2({\Omega})$ with $\bar v\ge 0,\  f-qD\ge 0,\ a.e. $ in $\Omega$. Then there holds $u(t)\ge 0$, for $t> 0$.
\end{lemma}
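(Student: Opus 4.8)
The plan is to deduce the stated positivity from the positivity preservation of the two solution operators appearing in the representation of the solution to \eqref{eqn:pde}. Writing $L = -\Delta + q$ with homogeneous Dirichlet boundary conditions and setting $F := f - qD$, the (time-independent source) solution admits the Duhamel-type representation
\begin{equation*}
u(t) = E(t)\bar v + \int_0^t \bar E(t-s)\, F\, \mathrm ds,
\end{equation*}
where $E(t) = E_{\alpha,1}(-t^\alpha L)$ and $\bar E(t) = t^{\alpha-1}E_{\alpha,\alpha}(-t^\alpha L)$ are defined through the functional calculus of the self-adjoint, nonnegative operator $L$. Since the hypotheses give $\bar v \ge 0$ and $F = f - qD \ge 0$ a.e.\ in $\Omega$, it suffices to prove that both $E(t)$ and $\bar E(t)$ map nonnegative data to nonnegative functions for every $t > 0$.

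For this I would invoke the subordination principle. At the scalar level, the Laplace transform pair for the M-Wright density $M_\alpha$ yields $E_{\alpha,1}(-\lambda t^\alpha) = \int_0^\infty M_\alpha(\theta)\, e^{-\lambda \theta t^\alpha}\, \mathrm d\theta$ for every $\lambda \ge 0$, where $M_\alpha \ge 0$ and $\int_0^\infty M_\alpha(\theta)\,\mathrm d\theta = 1$. Lifting this identity through the spectral calculus of $L$ gives
\begin{equation*}
E(t) = \int_0^\infty M_\alpha(\theta)\, e^{-\theta t^\alpha L}\, \mathrm d\theta ,
\end{equation*}
and an analogous subordination representation with a nonnegative kernel holds for the resolvent family $\bar E(t)$. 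The parabolic semigroup $e^{-sL}$ generated by $\Delta - q$ under Dirichlet conditions is positivity preserving for any $q \in L^\infty(\Omega)$: this follows from the classical weak maximum principle, or from the Feynman--Kac formula, in which the potential enters only through a positive multiplicative factor. Combining the nonnegativity of $M_\alpha$ with the positivity of $e^{-sL}$ shows $E(t)\bar v \ge 0$ and $\int_0^t \bar E(t-s)F\,\mathrm ds \ge 0$, hence $u(t) \ge 0$ for $t > 0$.

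An alternative route, closer to \cite{luchko2017maximum}, is the weak extremum principle. If a sufficiently regular solution attained a negative minimum at an interior point $(x_0,t_0)$ with $t_0 > 0$, then the spatial second-order condition gives $-\Delta u(x_0,t_0) \le 0$, Luchko's extremum principle for the Caputo derivative gives $\partial_t^\alpha u(x_0,t_0) \le \tfrac{t_0^{-\alpha}}{\Gamma(1-\alpha)}\big(u(x_0,t_0)-u(x_0,0)\big) \le 0$ (using $u(\cdot,0)=\bar v \ge 0$), and $q(x_0)u(x_0,t_0) \le 0$ since $q \ge 0$; substituting these into \eqref{eqn:pde} forces $F(x_0) < 0$, contradicting $F \ge 0$.

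The main obstacle in either route is the low regularity of the data: with $\bar v, f \in L^2(\Omega)$ and $q \in L^\infty(\Omega)$ the solution is not classical, and in particular need not be continuous up to $t = 0$, so the pointwise extremum argument is not directly justified. I would resolve this by approximation---replace $\bar v$ and $F$ by smooth nonnegative functions converging to them in $L^2(\Omega)$, apply the maximum principle to the resulting regular solutions (which enjoy full interior smoothing for $t>0$), and pass to the limit using the $L^2$-stability of the solution map, noting that a.e.\ nonnegativity is preserved under $L^2$ limits along a subsequence. The subordination approach is attractive precisely because it finesses this difficulty: $e^{-sL}$ preserves positivity of merely $L^2$ data, so no smoothness of $\bar v$ or $F$ is required.
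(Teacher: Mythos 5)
The paper does not actually prove this lemma: it states it as a known maximum principle and delegates the proof entirely to the citation \cite{luchko2017maximum}. Your proposal therefore goes beyond what the paper records, and it is correct. The subordination route is the cleanest and fully rigorous under the stated hypotheses: the scalar identity $E_{\alpha,1}(-\lambda t^\alpha)=\int_0^\infty M_\alpha(\theta)e^{-\lambda\theta t^\alpha}\,\mathrm d\theta$ with $M_\alpha\ge0$ lifts through the spectral calculus of $A_q$, the analogous nonnegative-kernel representation for $t^{\alpha-1}E_{\alpha,\alpha}(-t^\alpha A_q)$ is standard (Bazhlekova's subordination principle), and the semigroup $e^{-sA_q}$ is positivity preserving for any $0\le q\in L^\infty(\Omega)$ by the Beurling--Deny criterion or Trotter's formula --- crucially, this acts on $L^2$ data, so no smoothness of $\bar v$ or $f-qD$ is needed, exactly as you observe. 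Your second route is essentially the argument of the cited reference; one small caveat you do not mention is that with $q$ merely in $L^\infty(\Omega)$ the solution is at best $H^2$ in space even for smooth $\bar v$ and $f$, so the pointwise inequality $-\Delta u(x_0,t_0)\le 0$ at an interior minimum is not directly available and one would have to mollify $q$ as well as the data before passing to the limit. Since you explicitly identify the subordination argument as the one that avoids all such regularity issues, the proposal stands as a complete and self-contained proof of the lemma.
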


Now we present the solution representation of the initial-boundary value problem \eqref{eqn:pde}.
Let  $I$ be the identity operator and $A_q$ be the realization of $ -\Delta + q I$   with the homogeneous Dirichlet boundary condition, with the domain
$$ \text{Dom}(A_q) = \{ \psi \in H_0^1\II:\, A_q  \psi \in L^2\II \}=H_0^1\II\cap H^2\II .$$
$\{(\lambda_{n}(q), \fy_{n}(q))\}_{n=1}^\infty$ denote the eigenpairs of $A_q$, with $\{\fy_{n}(q)\}_{n=1}^\infty$ forms an orthonormal basis in $L^2(\Omega)$. 
 Throughout, we denote by~$\dH s$ the Hilbert space induced by the norm
$$\|\psi\|_{\dH s}^2:=\|A_q^\frac{s}{2}\psi\|_{L^2\II}^2=\sum_{n=1}^{\infty}\lambda_n^s (q)( \psi,\fy_{n}(q) )^2, \  s\ge0.$$
$\|\psi\|_{\dH 1}$ is a norm in $H_0^1(\Om)$, and $\|\psi\|_{\dH 2}=\|A_q \psi\|$ is a norm in~$H^2(\Om)\cap H^1_0(\Om)$. 
It is easy to see the space $\dH s$ is the interpolation space $(L^2\II, H^2\II\cap H_0^1\II)_{\frac{s}2}$.

If $q\in\A $, for any $ \psi \in H_0^1\II\cap H^2\II$, with constants $c_1$ and $c_2$ independent of $q$, the full elliptic regularity implies
(see e.g.  \cite[Lemma 2.1]{LiSun:2017} and \cite[Theorems 3.3 and 3.4]{GruterWidman:1982}) 
\begin{equation*}\label{eqn:equiv-n} 
c_1\|  \psi  \|_{H^2\II} \le \| A_q  \psi  \|_{L^2\II} + \|  \psi  \|_{L^2\II} \le c_2\|  \psi  \|_{H^2\II}.
\end{equation*}
 By mean of Laplace Transform,  the solution  of   $u(t)$ to problem \eqref{eqn:pde} can be represented by 
\begin{equation}\label{eq:solutionrepre}
\begin{aligned}
 u( t)&= F_q(t)\bar v +   \int_0^tE_q(s)(f- qD)\d s 
= F_q(t)(v-D)+(I-F_q(t)) A_q^{-1}(f- qD),
\end{aligned}
\end{equation}
where   the operators  $F_q(t)$ and $E_q(t)$ are,  respectively, defined by 
\begin{equation}\label{eqn:FE-MLcop}
 \begin{aligned}
 F_q(t)= \frac{1}{2\pi\mathrm{i}} \int_{\contour} e^{zt} z^{\alpha-1} (z^\alpha
 + A_q)^{-1}  \,\d z,\quad 
E_q (t)= \frac{1}{2\pi\mathrm{i}} \int_{\contour} e^{zt} (z^\alpha +A_q)^{-1}\,\d z.
  \end{aligned}
\end{equation}
 Here $\contour =\{z\in \mathbb{C}: |z| = \delta , |\arg z|\le \theta\} \cup \{ z\in \mathbb{C}: z =\rho e^{\pm i\theta},\rho\ge \sigma\}$ denotes the integral contour in the complex plane $\mathbb{C}$ 
with $\sigma\ge 0$ and $\frac{\pi}{2}<\theta< \frac{\pi}{\alpha}$,
oriented counterclockwise.
Therefore   the solution $U(t)$ to problem~\eqref{eqn:pdeo} could be represented by
\begin{align}\label{sol:origsol}
     U(t)=   F_q(t)(v-D)+(I-F_q(t)) A_q^{-1}(f- qD)+D.
\end{align}
In the following context,  we denote the solutions of $u$ and $U$ for problems~\eqref{eqn:pde} and \eqref{eqn:pdeo} corresponding to the potential and initial condition by $u(t;q, \bar v)$ and $U(t;q, v)$. There holds the relation $\bar v=v-D$.


The following lemma describes the smoothing properties and asymptotic behavior of the solution operators $F_q(t)$ and $E_q(t)$ defined in \eqref{eqn:FE-MLcop}. 
Here, $\|\cdot\|$ denotes the operator norm on $L^2(\Omega)$. 
The proof of part~(i) can be found in \cite[Theorems~6.4 and~3.2]{Jin:book2021} and \cite[Theorem~4.1]{sakamoto2011initial}. 
The proof of part~(ii) will be provided below.

\begin{lemma}\label{lem:op}
Let $F_q(t)$ and $E_q(t)$ be the solution operators defined in \eqref{eqn:FE-MLcop} for $q\in\A$.
Then  there exists a constant $c_0>0$ independent of $q$ and $t$ such that  for all $t>0$, $0\le \nu \le 1$:
\begin{itemize}
\item[$\rm(i)$] $t^{\nu\alpha}\|A_q^{\nu}F_q(t)\|+t^{1-(1-\nu)\alpha}\|A_q^{\nu}E_q(t)\|\le c_0,$  and  $ \|(A_q  F_q(t))^{-1}\| \le c_0 (1+t^{\alpha}) $;
\item[$\rm(ii)$] $ \|F_q(s) F_q(t)^{-1}\|\le c_0(1+s^{-\al}t^\al),\quad \|F'_q(s) F_q(t)^{-1}\|\le c_0s^{-1}(1+s^{-\al}t^\al) $ for $0<s\le t$.    
\end{itemize}
\end{lemma}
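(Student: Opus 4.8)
The plan is to diagonalize the solution operators in the eigenbasis of $A_q$ and reduce both estimates to uniform pointwise bounds on ratios of Mittag--Leffler functions. Since $A_q=-\Delta+qI$ with real $q\in\A$ is self-adjoint with eigenpairs $\{(\lambda_n(q),\fy_n(q))\}$, the contour integral in \eqref{eqn:FE-MLcop} can be evaluated spectrally by Laplace inversion, giving $F_q(t)\fy_n=E_{\alpha,1}(-\lambda_n(q)t^\al)\fy_n$. Because $E_{\alpha,1}(-x)>0$ for all $x\ge0$, the operator $F_q(t)$ is injective, and $F_q(s)F_q(t)^{-1}$ extends to the bounded operator acting as multiplication by $E_{\alpha,1}(-\lambda_n s^\al)/E_{\alpha,1}(-\lambda_n t^\al)$ on the $n$-th eigenspace. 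As these are normal multiplication operators, the operator norm equals the supremum of the multiplier:
\[
\|F_q(s)F_q(t)^{-1}\|=\sup_n\frac{E_{\alpha,1}(-\lambda_n s^\al)}{E_{\alpha,1}(-\lambda_n t^\al)}.
\]

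For the first estimate I would invoke the two-sided bound $c_*(1+x)^{-1}\le E_{\alpha,1}(-x)\le C_*(1+x)^{-1}$ for $x\ge0$, with constants depending only on $\al$. This controls the ratio by $\tfrac{C_*}{c_*}\cdot\tfrac{1+\lambda_n t^\al}{1+\lambda_n s^\al}$, and an elementary split into the cases $\lambda_n s^\al\le1$ and $\lambda_n s^\al>1$ shows $\tfrac{1+\lambda_n t^\al}{1+\lambda_n s^\al}\le2(1+s^{-\al}t^\al)$ uniformly in $n$, $s\le t$. Since the Mittag--Leffler bounds depend only on the argument and not on the $q$-dependent eigenvalues, the resulting $c_0$ is uniform in $q\in\A$.

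For the derivative estimate I would use $\tfrac{d}{ds}E_{\alpha,1}(-\lambda s^\al)=-\lambda s^{\al-1}E_{\alpha,\alpha}(-\lambda s^\al)$, so that $F'_q(s)F_q(t)^{-1}$ is multiplication by $-\lambda_n s^{\al-1}E_{\alpha,\alpha}(-\lambda_n s^\al)/E_{\alpha,1}(-\lambda_n t^\al)$. The crucial input is the sharper decay $0\le E_{\alpha,\alpha}(-x)\le C_{**}(1+x)^{-2}$, which holds because the leading $O(x^{-1})$ term in the large-argument asymptotics vanishes ($1/\Gamma(0)=0$). Combining this with the lower bound on $E_{\alpha,1}$ and factoring out $s^{-1}$ gives
\[
\left|\frac{\lambda_n s^{\al-1}E_{\alpha,\alpha}(-\lambda_n s^\al)}{E_{\alpha,1}(-\lambda_n t^\al)}\right|
\le\frac{C_{**}}{c_*}\,s^{-1}\,\frac{\lambda_n s^\al(1+\lambda_n t^\al)}{(1+\lambda_n s^\al)^2},
\]
and the same two-case argument bounds the last fraction by $2(1+s^{-\al}t^\al)$, which yields the claim.

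The main obstacle is twofold. First, one must secure the sharp $(1+x)^{-2}$ bound on $E_{\alpha,\alpha}$: the weaker $(1+x)^{-1}$ estimate is insufficient, since it would leave an unabsorbable factor $\lambda_n t^\al$ in Case~2. Second, one must verify that every constant is independent of $q\in\A$; this follows precisely because the Mittag--Leffler bounds are functions of the argument alone, so the supremum over $n$ is governed by a $q$-free multiplier bound valid for all $\lambda_n s^\al\ge0$.
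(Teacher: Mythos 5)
Your proposal is correct and follows essentially the same route as the paper: diagonalize $F_q$ and $E_q$ in the eigenbasis of $A_q$, use the two-sided bound on $E_{\alpha,1}(-x)$ and the sharper $(1+x^2)^{-1}$ decay of $E_{\alpha,\alpha}(-x)$, identify $F_q'(s)$ with $-A_qE_q(s)$, and reduce everything to the uniform multiplier bound $(1+\lambda t^\al)/(1+\lambda s^\al)\le c(1+s^{-\al}t^\al)$. The only cosmetic difference is that you phrase the conclusion as a supremum of multipliers for a normal operator, whereas the paper sums the squared Fourier coefficients directly; these are equivalent.
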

\begin{proof}
    For any $v\in L^2(\Omega)$, we have the following equivalence formulas of  $F_q(t)$ and $E_q(t)$
\begin{equation*}
\begin{aligned}
F_q(t) v = \sum_{n=1}^\infty E_{\alpha,1}(-\lambda_{n}(q)t^\alpha)(v,\fy_{n}(q))\fy_{n}(q), \quad
E_q(t)v = \sum_{n=1}^\infty t^{\alpha-1}E_{\alpha,\alpha}(-\lambda_{n}(q) t^\alpha)(v,\fy_{n}(q))\fy_{n}(q),
\end{aligned}
\end{equation*}
 where $E_{\alpha,\beta}(z)$ denotes the two-parameter Mittag--Leffler function.  It is well-known that, with $\alpha\in(0,1)$, there hold  \cite[Lemma 1.3]{JinZhou:2023book}  and \cite[Theorem 3.3 and Corollary 3.3]{Jin:book2021} for all $t\ge0$
\begin{equation*}
   0\le E_{\alpha,\alpha}(-t) \le \frac{c}{1+t^2}\quad \text{and}\quad\frac{1}{1+\Gamma(1-\alpha)t}\le E_{\alpha,1}(-t)\le \frac{1}{1+\Gamma(1+\alpha)^{-1}t}.
\end{equation*}
Therefore, we obtain 
\begin{align*}
    \|F_q(s) F_q(t)^{-1}v\|_{L^2\II}^2&\le c\sum_{n=1}^{\infty}\left|\frac{1+\lambda_{n}(q)t^\alpha}{1+\lambda_{n}(q)s^\alpha}\right|^2(v,\fy_{n}(q))^2
    \le c(1+s^{-\al}t^\al)^2\sum_{n=1}^{\infty}(v,\fy_{n}(q))^2.
\end{align*}
Additionally, using the identity $A_qE_q(t)=-F'_q(t)$ \cite[Lemma 6.3]{Jin:book2021} gives
\begin{align*}
    \|F'_q(s) F_q(t)^{-1}v\|_{L^2\II}^2
    \le &c\sum_{n=1}^{\infty}\left|\frac{\lambda_{n}(q)s^{\al-1}}{\sqrt{1+(\lambda_{n}(q)s^\alpha)^2}}\frac{1+\lambda_{n}(q)t^\alpha}{\sqrt{(1+(\lambda_{n}(q)s^\alpha)^2}}\right|^2(v,\fy_{n}(q))^2
    \\
    \le & 
    c(s^{-1}(1+s^{-\al}t^{\al}))^2\sum_{n=1}^{\infty}(v,\fy_{n}(q))^2.
\end{align*}
These complete the proof of the desired estimate  (ii).
\end{proof}
We now state several regularity results for $u(t)$ to the time-fractional diffusion equation~\eqref{eqn:pde}.
\begin{lemma}\label{lem:Dalu}
Let ${\bar v}, f, D(x)\in L^2\II$ and $q\in \A$, and $u(t)$ be the solution to problem \eqref{eqn:pde}. Then there holds
\begin{align*}
t^{1+\al}\|u'(t)\|_{H^2\II}+t\|   u'(t) \|_{L^2\II} +\|   u(t) \|_{L^2\II}\le c_{{\bar v},f,D} ,\quad
 \| \Dal u  (t)\|_{L^2\II} + \|  u(t) \|_{\dot H^{2}\II} \le c_{{\bar v},f,D} (1 + t^{-\alpha}),
\end{align*}
where the constant $c_{{\bar v},f,D}>0$ is  independent of $q$ and $t$,  but may depend on   ${\bar v}$, $f$ and  $D$.
\end{lemma}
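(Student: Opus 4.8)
The plan is to read every estimate off the solution representation \eqref{eq:solutionrepre} and reduce it to the scalar Mittag--Leffler bounds already used in the proof of Lemma \ref{lem:op}. First I would absorb the steady state: setting $g:=f-qD$, so that $\|g\|_{L^2\II}\le\|f\|_{L^2\II}+m\|D\|_{L^2\II}$, the second form in \eqref{eq:solutionrepre} reads
\[
u(t)=F_q(t)w+A_q^{-1}g,\qquad w:=\bar v-A_q^{-1}g.
\]
Since $q\ge0$ forces $\lambda_1(q)\ge\lambda_1(0)>0$, we have $\|A_q^{-1}\|\le 1/\lambda_1(0)$ uniformly in $q\in\A$, whence $w\in L^2\II$ with $\|w\|_{L^2\II}\le\|\bar v\|_{L^2\II}+c\|g\|_{L^2\II}$, a bound depending only on $\bar v,f,D$. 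The estimate $\|u(t)\|_{L^2\II}\le c_{\bar v,f,D}$ is then immediate from $\|F_q(t)\|\le c_0$.

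The advantage of this form is that the constant term disappears under differentiation, giving the clean identity $u'(t)=F_q'(t)w=-A_qE_q(t)w$, so that all the $u'$ estimates are governed by $A_q^\nu E_q(t)$. For $t\|u'(t)\|_{L^2\II}=t\|A_qE_q(t)w\|_{L^2\II}$ the operator bound in Lemma \ref{lem:op}(i) with $\nu=1$ suffices. For $t^{1+\al}\|u'(t)\|_{H^2\II}$ I would use the uniform elliptic norm equivalence $\|\cdot\|_{H^2\II}\le c_1^{-1}(\|A_q\cdot\|_{L^2\II}+\|\cdot\|_{L^2\II})$ and bound $t^{1+\al}\|A_q^2E_q(t)w\|_{L^2\II}$ and $t^{1+\al}\|A_qE_q(t)w\|_{L^2\II}$; expanding in the eigenbasis $\{(\lambda_n(q),\fy_n(q))\}$ and inserting $E_{\alpha,\alpha}(-x)\le c/(1+x^2)$ with $x=\lambda_n(q)t^\alpha$, these reduce mode by mode to the uniformly bounded quantities $x^4/(1+x^2)^2$ and $\lambda_n(q)^{-2}x^4/(1+x^2)^2$, both $\le c$ after summation against $\sum_n(w,\fy_n(q))^2=\|w\|_{L^2\II}^2$.

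For the second group I would avoid a further differentiation and use the equation directly. From \eqref{eqn:pde}, $\Dal u=\Delta u-qu+g=-A_qu+g$, and since $A_qu(t)=A_qF_q(t)w+g$ the source cancels, leaving
\[
\Dal u(t)=-A_qF_q(t)w,\qquad \|u(t)\|_{\dH2}=\|A_qu(t)\|_{L^2\II}\le\|A_qF_q(t)\|\,\|w\|_{L^2\II}+\|g\|_{L^2\II}.
\]
The smoothing bound $\|A_qF_q(t)\|\le c_0t^{-\al}$ from Lemma \ref{lem:op}(i) (equivalently $\lambda_n(q)E_{\alpha,1}(-\lambda_n(q)t^\alpha)\le ct^{-\al}$ via $E_{\alpha,1}(-x)\le c/(1+x)$) then yields $\|\Dal u(t)\|_{L^2\II}\le c_0t^{-\al}\|w\|_{L^2\II}$ and $\|u(t)\|_{\dH2}\le c_{\bar v,f,D}(1+t^{-\al})$, the constant term $\|g\|_{L^2\II}$ producing the $1$.

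The main obstacle is uniformity in $t$ over all of $(0,\infty)$. The crude operator bound $\|A_qE_q(t)\|\le c_0t^{-1}$ would give $t^{1+\al}\|u'(t)\|_{L^2\II}\le ct^\al$, which blows up as $t\to\infty$; the point is that, tested against a fixed $w$, the operators $A_q^\nu E_q(t)$ decay faster (like $t^{-1-\al}$) for large $t$, precisely because the Mittag--Leffler bound $E_{\alpha,\alpha}(-x)\le c/(1+x^2)$ carries a quadratic denominator. One must therefore retain the full factor $x^a/(1+x^2)^b$ mode by mode rather than pass to the operator norm, the per-mode quantities staying bounded uniformly in $n$ and $t$. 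The $q$-independence of the constant comes from the uniform lower bound on $\lambda_1(q)$ and the uniform elliptic regularity constants $c_1,c_2$ already recorded.
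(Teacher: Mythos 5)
Your argument is correct and follows essentially the same route as the paper: the same reduction $u(t)=F_q(t)w+A_q^{-1}(f-qD)$, the same identity $u'(t)=-A_qE_q(t)w$ with the smoothing bounds of Lemma \ref{lem:op}, and the same cancellation $\Dal u(t)=-A_qF_q(t)w$. The only difference is that the paper delegates the $H^2$-bound on $u'$ and the $\Dal u$, $\dot H^2$ estimates to a citation of \cite[Lemma 4.3]{Kian2023}, whereas you supply the mode-by-mode Mittag--Leffler argument explicitly (correctly noting that the crude operator bound $\|A_qE_q(t)\|\le c_0t^{-1}$ is insufficient for uniformity as $t\to\infty$), so your write-up is in fact more self-contained.
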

\begin{proof}
    Employing the solution representation \eqref{eq:solutionrepre},  Lemma~\ref{lem:op} gives $ \|u(t)\|_{L^2\II}\le c_{{\bar v},f,D}$. 
     Moreover, the identity $A_qE_q(t)=-F'_q(t)$ gives 
    \begin{align*}
        \|u'(t)\|_{L^2\II}=&\|A_qE_q(t)({\bar v}-A_q^{-1}(f-qD))\|_{L^2\II}\le c_0t^{-1}\|{\bar v}-A_q^{-1}(f-qD)\|_{L^2\II}\le c_{{\bar v},f,D}t^{-1}.
    \end{align*}
    The rest of the proof to the estimate can be found in   \cite[Lemma 4.3]{Kian2023}.
\end{proof}
Using Sobolev embedding theorem, we can derive the following lemma.
\begin{lemma}\label{Lem:sobemb}
    For any $u, v\in L^2\II$, $q\in \A$, there exists a constant $c_1$ independent of $q$ such that 
    \begin{align*}
        \|A_q^{-1} u\|_{L^\infty\II}\le c_1\|u\|_{L^2\II},\quad \|A_q^{-1} (uv)\|_{L^2\II}\le c_1\|u\|_{L^2\II}\|v\|_{L^2\II}.
    \end{align*}
\end{lemma}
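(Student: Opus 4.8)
The plan is to derive the second ($L^2$) bound from the first ($L^\infty$) bound by a self-adjointness/duality argument, so the core task is really the $L^\infty$ estimate. For that I would combine the $q$-uniform elliptic regularity recalled above with the Sobolev embedding $H^2\II\hookrightarrow L^\infty\II$, which is valid precisely because $d\le 3$.

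First, for the $L^\infty$ bound I set $\psi=A_q^{-1}u\in H^2\II\cap H_0^1\II$, so that $A_q\psi=u$. The elliptic regularity estimate quoted in the excerpt gives $\|\psi\|_{H^2\II}\le C(\|A_q\psi\|_{L^2\II}+\|\psi\|_{L^2\II})=C(\|u\|_{L^2\II}+\|\psi\|_{L^2\II})$ with $C$ independent of $q$. To control $\|\psi\|_{L^2\II}$ I use that $q\ge0$ forces the coercivity $(A_q\phi,\phi)=(\nabla\phi,\nabla\phi)+(q\phi,\phi)\ge(\nabla\phi,\nabla\phi)\ge\lambda_1\|\phi\|_{L^2\II}^2$, where $\lambda_1>0$ is the first Dirichlet eigenvalue of $-\Delta$ on $\Omega$; hence $\|A_q^{-1}\|_{L^2\II\to L^2\II}\le\lambda_1^{-1}$ uniformly in $q\in\A$, so $\|\psi\|_{L^2\II}\le\lambda_1^{-1}\|u\|_{L^2\II}$. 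Combining the two inequalities yields $\|\psi\|_{H^2\II}\le C\|u\|_{L^2\II}$ with a $q$-independent $C$, and the Sobolev embedding (whose constant depends only on $\Omega$) gives the first estimate.

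For the second estimate I would exploit the self-adjointness of $A_q$, hence of $A_q^{-1}$. For $w\in L^2\II$ we have $(A_q^{-1}(uv),w)=(uv,A_q^{-1}w)$, and by Cauchy--Schwarz together with the first estimate, $|(uv,A_q^{-1}w)|\le\|uv\|_{L^1\II}\|A_q^{-1}w\|_{L^\infty\II}\le c_1\|u\|_{L^2\II}\|v\|_{L^2\II}\|w\|_{L^2\II}$, using $\|uv\|_{L^1\II}\le\|u\|_{L^2\II}\|v\|_{L^2\II}$. Taking the supremum over $\|w\|_{L^2\II}=1$ gives the desired bound. Since $uv$ a priori lies only in $L^1\II$, I would make this rigorous by applying the estimate to the truncations $(uv)_k:=(uv)\,\mathbf{1}_{\{|uv|\le k\}}\in L^2\II\cap L^1\II$, which converge to $uv$ in $L^1\II$; the above bound shows $\{A_q^{-1}(uv)_k\}$ is Cauchy in $L^2\II$, and its limit defines $A_q^{-1}(uv)$ with the asserted estimate.

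I expect the only delicate point to be the uniformity of all constants in $q\in\A$, and this is secured by three ingredients: the $q$-independence of the elliptic regularity constants $c_1,c_2$ stated above; the lower bound $\lambda_1(q)\ge\lambda_1>0$ coming solely from $q\ge0$ (so no coercivity is lost as $q$ varies); and the purely geometric nature of the Sobolev embedding constant on the fixed domain $\Omega$. The $L^1$-versus-$L^2$ gap in the second estimate is a technical wrinkle rather than a genuine obstacle, dispatched by the truncation/density argument without any new analysis.
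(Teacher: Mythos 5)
Your proposal is correct and follows essentially the same route as the paper: elliptic regularity uniform in $q\in\A$ plus the Sobolev embedding $H^2\II\hookrightarrow L^\infty\II$ (valid for $d\le 3$) for the first bound, and a duality argument via the self-adjointness of $A_q^{-1}$ together with $\|uv\|_{L^1\II}\le\|u\|_{L^2\II}\|v\|_{L^2\II}$ for the second. Your additional care (the coercivity bound $\|A_q^{-1}\|_{L^2\to L^2}\le\lambda_1^{-1}$ and the truncation argument for $uv\in L^1\II$) only makes explicit details the paper leaves implicit.
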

\begin{proof}
    The first estimate is obtained by Sobolev embedding theorem and elliptic regularity pickup such that
   \begin{align*}
    \|A_q^{-1} u\|_{L^\infty\II} \le C \|A_q^{-1} u\|_{H^2\II}  \le c_1 \|u\|_{L^2\II}.
    \end{align*}    
    This immediately leads to the second estimate: 
    \begin{align*}
      \|A_q^{-1} (uv)\|_{L^2\II} &=\sup_{\fy\in L^2\II}\frac{( uv,A_q^{-1}\fy)}{\|\fy\|_{L^2\II}}\le  \sup_{\fy\in L^2\II}\frac{ \|uv\|_{L^1\II}\|A_q^{-1}\fy\|_{L^\infty\II}}{\|\fy\|_{L^2\II}} 
      \\&\le c_1\|uv\|_{L^1\II}\le c_1\|u\|_{L^2\II}\|v\|_{L^2\II}.
    \end{align*}
    This completes the proof of the lemma.
\end{proof}

\subsection{Conditional stability of the inverse problem.}\label{sub2.2} 
In this part, we will show the well-posedness of the inverse problem:
look for the initial condition $v^\dag\in  L^2\II$ and 
 potential $q^\dag\in \A$, such that $U(x,t)$ satisfying
 \begin{equation}\label{eq:back_non}
 \begin{cases}
  \begin{aligned}
     \Dal U(x,t)  - \Delta U(x,t) +q^\dag(x) U(x,t)&=f(x) &&(x,t)\in \Omega\times(0,T],\\
    U(x,t)&=b(x),&&(x,t)\in \partial\Omega\times(0,T],\\
    U(x,T_1)&=g_1(x),&&x\in\Omega,\\
    U(x,T_2)&=g_2(x),&&x\in\Omega.
  \end{aligned}
  \end{cases}
 \end{equation}
Using the solution representation \eqref{sol:origsol} gives 
\begin{equation*}
 U(x,T_1) =   F_{q^\dag}(T_1)(v^\dag-D)+(I-F_{q^\dag}(T_1)) A_{q^\dag}^{-1}(f- {q^\dag}D)+D,
\end{equation*}
where $D(x)$ is the solution to \eqref{eqn:pdedx}. Therefore, we can derive that
\begin{align}\label{eq:backcontsolrep}
     v^\dag= F_{q^\dag}(T_1)^{-1}(g_1-D-(I-F_{q^\dag}(T_1)) A_{q^\dag}^{-1}(f- {q^\dag}D))+D.
\end{align}
In addition, the first  and the fourth relations in  \eqref{eq:back_non} lead to
\begin{equation}\label{eq:backcontsolrepq}
    q^\dag=\frac{f- \Dal U (T_2;q^\dag,v^\dag)  + \Delta g_2}{g_2}.
\end{equation}
We shall investigate the existence and uniqueness of $v^\dag$ and $q^\dag$ satisfying \eqref{eq:backcontsolrep} and \eqref{eq:backcontsolrepq}, which pertains to  the well-posedness of the inverse problem for identifying the initial condition and the potential 
 \eqref{eq:back_non}. 
Note that the relation \eqref{eq:backcontsolrep}--\eqref{eq:backcontsolrepq} naturally provides a fixed point iteration where the potential  $q^\dag$ is the fixed point.   

The operator 
 $K: \A \rightarrow \A$ defined by 
 \begin{equation}\label{eqn:K}
 K q :=  P_{[0,m]}\left(\frac{f- \Dal U(T_2;q,B(q,g_1)) + \Delta g_2  }{g_2 } \right),
\end{equation}
where the function $P_{[0,m]}:\mathbb{R} \rightarrow \mathbb{R}$ denotes a truncation function defined by
\begin{align}\label{eqn:P0M1}
P_{[0,m]} (a) :=   \max(\min( m, a ),0),
\end{align}
and  $ B(q,g)$ is given by 
\begin{align*}
    B(q,g)= F_{q}(T_1)^{-1}(g-D-(I-F_{q}(T_1)) A_{q}^{-1}(f- {q}D))+D.
\end{align*}
The existence and uniqueness of $q^\dag$ follow from the operator $K$ having a unique fixed point; $v^\dag$ is then uniquely solved from \eqref{eq:backcontsolrep}.


We now present the following two lemmas, which provide crucial \textsl{a priori} estimates that are extensively used in the proof of the contraction mapping.



\begin{lemma}\label{lem:stab-0}
Let $u(t;q_i,\bar v_i),\  i=1,2$  be the solution of \eqref{eqn:pde} with potential $q_i\in \A$ and  initial condition $\bar v_i\in L^2\II$.  Then it holds that for any $t > 0$
\begin{equation*}
\|\Dal u(t;q_1,\bar v_1)-\Dal u (t;q_2,\bar v_2)\|_{L^2\II} \le \tilde c_{0,\bar v_2,f,D}t^{-\alpha}\left(\|\bar v_1-\bar v_2\|_{L^2\II}+ \|q_1-q_2\|_{L^2\II}\right),
\end{equation*}
where the  constant $\tilde c_{0,\bar v_2,f,D}>0$  is independent of $q_1$, $ q_2$ and $t$, but may depend on $\bar v_2$, $f$ and  $D$.
\end{lemma}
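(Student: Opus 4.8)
The plan is to recast the difference in terms of the solution operators $F_q,E_q$ and to reduce the bound to a single time-convolution estimate. First I would record, from \eqref{eqn:pde} and \eqref{eq:solutionrepre}, the identity
\[
\Dal u(t;q,\bar v)=(f-qD)-A_q u(t;q,\bar v)=F_q(t)(f-qD)-A_qF_q(t)\bar v .
\]
Writing $u_i=u(t;q_i,\bar v_i)$ and $w=u_1-u_2$, the difference $w$ solves \eqref{eqn:pde} with potential $q_1$, initial value $\bar v_1-\bar v_2$, and source $g(t):=(q_2-q_1)\bigl(D+u_2(t)\bigr)$, since $A_{q_1}u_1-A_{q_2}u_2=A_{q_1}w+(q_1-q_2)u_2$. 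Hence $\Dal w=-A_{q_1}w+g(t)$ and $w(t)=F_{q_1}(t)(\bar v_1-\bar v_2)+\int_0^tE_{q_1}(t-s)g(s)\d s$ by Duhamel's formula.

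The decisive point is that estimating $\Dal w=-A_{q_1}w+g$ term by term fails: the summand $g(t)$ alone is only $O(1)$ as $t\to\infty$, and its decay comes entirely from cancellation with $-A_{q_1}w$. To expose that cancellation I would substitute $A_{q_1}E_{q_1}=-F_{q_1}'$ (Lemma~\ref{lem:op} and the identity $A_qE_q=-F_q'$) and integrate by parts in time, after which the boundary term $g(t)$ cancels and one is left with
\[
\Dal w(t)=\underbrace{-A_{q_1}F_{q_1}(t)(\bar v_1-\bar v_2)}_{\mathrm A}
+\underbrace{F_{q_1}(t)(q_2-q_1)(\bar v_2+D)}_{\mathrm B}
+\underbrace{\int_0^tF_{q_1}(t-s)(q_2-q_1)u_2'(s)\d s}_{\mathrm C}.
\]
Piece $\mathrm A$ is immediate from Lemma~\ref{lem:op}(i) with $\nu=1$, giving $c_0t^{-\alpha}\|\bar v_1-\bar v_2\|_{\L2Om}$. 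For $\mathrm B$ I would dualize: testing against $\phi\in\L2Om$ and using that $q_2-q_1$ is a multiplier, $\|\mathrm B\|_{\L2Om}\le\|(q_2-q_1)(\bar v_2+D)\|_{L^1\II}\sup_\phi\|F_{q_1}(t)\phi\|_{L^\infty\II}$, where the first factor is $\le\|q_1-q_2\|_{\L2Om}(\|\bar v_2\|_{\L2Om}+\|D\|_{\L2Om})$ and, by $H^2\hookrightarrow L^\infty$ (as in Lemma~\ref{Lem:sobemb}) together with Lemma~\ref{lem:op}(i), $\|F_{q_1}(t)\phi\|_{L^\infty\II}\le Ct^{-\alpha}\|\phi\|_{\L2Om}$. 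This duality is exactly what lets me retain only the $L^2$-norm of $q_1-q_2$ instead of its $L^\infty$-norm, and I would use the same device repeatedly.

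The main obstacle is piece $\mathrm C$, whose integrand is singular like $s^{-1}$ at $s=0$ (Lemma~\ref{lem:Dalu}) yet must still yield the uniform factor $t^{-\alpha}$ for all $t>0$. I would split $\int_0^t=\int_{t/2}^t+\int_0^{t/2}$. On $[t/2,t]$ the factor $u_2'$ is regular, $\|u_2'(s)\|_{L^\infty\II}\le Cs^{-1-\alpha}\le Ct^{-1-\alpha}$ by Lemma~\ref{lem:Dalu} and $H^2\hookrightarrow L^\infty$, so a direct estimate with $\|F_{q_1}(t-s)\|\le C$ gives $Ct^{-\alpha}\|q_1-q_2\|_{\L2Om}$. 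On $[0,t/2]$ the regular factor is $F_{q_1}(t-s)$ (since $t-s\ge t/2$); there I would dualize, integrate by parts in $s$ to move the derivative off $u_2$, estimate the two boundary terms as in $\mathrm B$ (using $\|u_2(s)\|_{\L2Om}\le c_{\bar v_2,f,D}$ and $\|F_{q_1}\phi\|_{L^\infty}\le Ct^{-\alpha}\|\phi\|$), and bound the residual integral by $\|q_1-q_2\|_{\L2Om}\,\|u_2(s)\|_{L^\infty\II}\,\|A_{q_1}E_{q_1}(t-s)\|$. The delicate input, which does \emph{not} follow from the uniform bounds of Lemma~\ref{lem:op}(i) alone, is the enhanced large-time decay $\|F_q(r)\phi\|_{L^\infty\II}\le Cr^{-\alpha}\|\phi\|_{\L2Om}$ and $\|A_qE_q(r)\|\le C\min(r^{-1},r^{-1-\alpha})$; these rest on the uniform spectral gap $\lambda_1(q)\ge\lambda_1(0)>0$ for $q\in\A$ (Poincaré) combined with the Mittag--Leffler bound $E_{\alpha,\alpha}(-x)\le c/(1+x^2)$, i.e.\ precisely the mechanism used in the proof of Lemma~\ref{lem:op}(ii). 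Balancing the singularity $\|u_2(s)\|_{L^\infty\II}\le C(1+s^{-\alpha})$ against this decay (with the elementary case split $t\lessgtr 1$) closes the estimate, the constant $\tilde c_{0,\bar v_2,f,D}$ inheriting its dependence on $\bar v_2,f,D$ through the bounds for $u_2$ and $u_2'$ in Lemma~\ref{lem:Dalu}.
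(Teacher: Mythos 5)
Your proposal is correct, and its skeleton coincides with the paper's: the same difference equation for $\omega=u_1-u_2$ with potential $q_1$ and source $(q_2-q_1)(u_2+D)$, the same decisive cancellation of the non-decaying source term via $A_qE_q=-F_q'$, and the same use of $L^1$--$L^\infty$ duality (the paper packages it as Lemma~\ref{Lem:sobemb}, $\|A_q^{-1}(uv)\|_{L^2\II}\le c\|u\|_{L^2\II}\|v\|_{L^2\II}$) to keep only the $L^2$-norm of $q_1-q_2$. Where you genuinely diverge is in how the resulting convolution is controlled. The paper never estimates $F_{q_1}$ or $A_{q_1}E_{q_1}$ in absolute terms for large time: it conjugates by $(A_{q_1}F_{q_1}(t))^{-1}$, bounds that quantity by a constant using the ratio estimates $\|F_{q_1}(t)^{-1}F_{q_1}(s)\|\le c(1+s^{-\al}t^\al)$ of Lemma~\ref{lem:op}(ii) together with the identity $t\phi(t)=\int_0^t(t-s)F_{q_1}(t-s)\bar\omega(s)\,\d s+\int_0^tF_{q_1}(s)(t-s)\bar\omega(t-s)\,\d s$ (which inserts the factor $s$ in front of $\bar\omega'$ and tames the $s^{-1}$ singularity), and only then multiplies back by $A_{q_1}F_{q_1}(t)$ to harvest $t^{-\al}$. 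Your route instead expands $\phi'(t)$ by Leibniz, splits at $t/2$, and pays for the direct estimation with the enhanced decay bounds $\|F_q(t)\phi\|_{L^\infty\II}\le Ct^{-\al}\|\phi\|_{L^2\II}$ and $\|A_qE_q(r)\|\le C\min(r^{-1},r^{-1-\al})$, which you correctly identify as resting on the uniform spectral gap $\lambda_1(q)\ge\lambda_1(0)>0$ rather than on Lemma~\ref{lem:op}(i); these are true and provable by the Mittag--Leffler bounds already used for part (ii), so nothing is missing, but they would have to be stated and proved as an auxiliary lemma. Two small remarks: your intermediate piece $\mathrm C$ contains $u_2'(s)\sim s^{-1}$ near $s=0$ and so converges only as an improper integral (the subsequent integration by parts on $[0,t/2]$ restores rigor, but the decomposition should be justified as a limit over $[\epsilon,t]$ --- this is exactly the singularity the paper's $t\phi(t)$ identity is designed to avoid); and the boundary term at $s=0$ of that second integration by parts cancels against part of your piece $\mathrm B$, which is worth making explicit. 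The trade-off: the paper's argument stays entirely within its stated lemmas and transfers verbatim to the discrete setting (Lemma~\ref{lem:Dal-uhn}), while yours is arguably more transparent about where the $t^{-\al}$ decay actually comes from.
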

\begin{proof}
   Let   $\omega(t)=u (t;q_1,\bar v_1)-u (t;q_2,\bar v_2)$.   Then $\omega$ solves
$$ \partial^\alpha_t \omega+A_{q_1} \omega=\bar \omega(t)  \quad \text{with}~ \omega(0)=\bar v_1-\bar v_2,$$
where $\bar\omega(t)=(q_2-q_1)(u(t;q_2,\bar v_2)+D)$.
By means of Laplace transform, 
the   solution $\omega(t)$ can be written as 
\begin{equation}\label{eq:repomega}
    \omega(t)=F_{q_1}(t)\omega(0)+\int_0^t E_{q_1}(t-s)\bar\omega(s)\ \ds.
\end{equation}
The governing equation for $\omega(t)$ and the identity $A_qE_q(t) = - F_q'(t)$  lead to
\begin{align*}
   \partial^\alpha_t  \omega(t)&=-A_{q_1} [F_{q_1}(t) \omega(0)+\int_0^t E_{q_1}(t-s)\bar\omega(s)\ \ds] +\bar\omega(t) \\
   &=-A_{q_1} F_{q_1}(t) \omega(0)+\int_0^t F'_{q_1}(t-s)\bar\omega(s)\ \ds +\bar\omega(t)
   \\
   &=A_{q_1} F_{q_1}(t) \left(-\omega(0)+ (A_{q_1} F_{q_1}(t))^{-1}\partial_t\int_0^t F_{q_1}(t-s)\bar\omega(s)\ \ds\right) .
\end{align*}
Let 
$\phi(t)=\int_0^t F_{q_1}(t-s)\bar\omega(s)\ \ds$.    
From Lemma \ref{lem:op}, there holds
\begin{align}\label{eq:acb0}
    \|\partial^\alpha_t  \omega(t)\|_{L^2\II}\le c_0t^{-\al} (\|\omega(0)\|_{L^2\II}+\| (A_{q_1} F_{q_1}(t))^{-1}\phi'(t)\|_{L^2\II}).
\end{align}
We now focus on the bound of $\| (A_{q_1} F_{q_1}(t))^{-1}\phi'(t)\|_{L^2\II}.$
Lemmas \ref{lem:op}--\ref{Lem:sobemb} imply
   \begin{align*}
&\quad\|(A_{q_1} F_{q_1}(t))^{-1}\phi(t) \|_{L^2\II}\\
\le &\int_0^t\left\| F_{q_1}(t)^{-1}F_{q_1}(t-s)A_{q_1}^{-1}\bar\omega(s)\right\|_{L^2\II}\ \ds
\le \int_0^t\left\| F_{q_1}(t)^{-1}F_{q_1}(t-s)\right\|\left\|A_{q_1}^{-1}\bar\omega(s)\right\|_{L^2\II}\ \ds\\
\le &c\|  q_1-q_2 \|_{L^2\II}\int_0^t(1+(t-s)^{-\al}t^\al) \|u(s;q_2,\bar v_2)+D\|_{L^2\II}\ds
\le c_{0,\bar v_2,f,D}t\|  q_1-q_2 \|_{L^2\II}.
\end{align*}     
Next, the identity
$t \phi(t) = \int_0^t (t-s) F_{q_1}(t-s) \bar\omega(s) \,\d s  + \int_0^t F_{q_1}(s)(t-s) \bar\omega(t-s) \,\d s$ gives
   \begin{align*}
 \partial_t(t    \phi(t))&= \int_0^t  \big[(t-s) F_{q_1}'(t-s)  +  F_{q_1}(t-s) \big] \bar\omega(s) \,\d s 
 + \int_0^t  F_{q_1}(s)   \big[\bar\omega(t-s) + (t-s) \bar\omega'(t-s) \big] \,\d s\\  &= \int_0^t  \big[s F_{q_1}'(s) +  F_{q_1}(s)\big] \bar\omega(t-s) \,\d s 
 + \int_0^t  F_{q_1}(t-s)   \big[\bar\omega(s)+ s\bar\omega'(s)\big] \,\d s =: {\rm I}_{c,1} + {\rm I}_{c,2}.
\end{align*}
Now we derive bounds for $(A_{q_1} F_{q_1}(t))^{-1}{\rm I}_{c,k}$, $k=1,2$. First, we can bound  the term $(A_{q_1} F_{q_1}(t))^{-1}{\rm I}_{c,1}$   by
Lemmas \ref{lem:op}--\ref{Lem:sobemb}:
\begin{align*}
  &\|\lb A_{q_1} F_{q_1}(t)\rb ^{-1} {\rm I}_{c,1} \|_{L^2\II}\\
  \le&  \int_0^t\left \| \big[sF_{q_1}(t)^{-1} F_{q_1}'(s) + F_{q_1}(t)^{-1} F_{q_1}(s)\big] A_{q_1} ^{-1}\bar\omega(t-s)\right\|_{L^2\II} \,\d s \\
    \le&  \int_0^t\left(\left \| sF_{q_1}(t)^{-1} F_{q_1}'(s)\right\| +\left\| F_{q_1}(t)^{-1} F_{q_1}(s)\right\|\right)\left\| A_{q_1} ^{-1}\bar\omega(t-s)\right\|_{L^2\II} \,\d s \\
  \le& c \| q_2-q_1\|_{L^2\II} \int_0^t (1+s^{-\al}t^\al)
  \| u(t-s;q_2,\bar v_2)+D\|_{L^2\II} \,\d s
 \le   c_{0,\bar v_2,f,D} t\|  q_1-q_2 \|_{L^2\II}.
\end{align*}
Similarly, using Lemmas \ref{lem:op}--\ref{Lem:sobemb}, the term $(A_{q_1} F_{q_1}(t))^{-1}{\rm I}_{c,2}$ can be bounded by
\begin{align*}
 &\|(A_{q_1} F_{q_1}(t))^{-1}{\rm I}_{c,2} \|_{L^2\II} \\\le &
  \int_0^t\left \|F_{q_1}(t)^{-1} F_{q_1}(t-s) A_{q_1}^{-1} \big[\bar\omega(s)+ s\bar\omega'(s)\big]\right\|_{L^2\II}  \,\d s \\
  &\le  
  \int_0^t\left \|F_{q_1}(t)^{-1} F_{q_1}(t-s)\right\|  \left  \|A_{q_1}^{-1} \big[\bar\omega(s)+ s\bar\omega'(s)\big]\right\|_{L^2\II}  \,\d s \\
 \le& c\|  q_1-q_2 \|_{L^2\II} \int_0^t (1+(t-s)^{-\al}t^\al)
  \left(\| u(s;q_2,\bar v_2)+D\|_{L^2\II}+\|su'(s;q_2,\bar v_2)\|_{L^2\II}\right)\,\d s \\
  \le&  c_{0,\bar v_2,f,D} t\|  q_1-q_2 \|_{L^2\II}.
\end{align*}
Then the triangle inequality yields that for any $t>0$, there holds
\begin{align*}\label{eq:estiphi}
 \| (A_{q_1} F_{q_1}(t))^{-1} \phi'(t) \|_{L^2(\Omega)} &\le t^{-1} \left(\| (A_{q_1} F_{q_1}(t))^{-1}(t\phi(t))' \|_{L^2(\Omega)} + \|(A_{q_1} F_{q_1}(t))^{-1}\phi(t)  \|_{L^2(\Omega)}\right) \\&\le  c_{0,\bar v_2,f,D}\| q_2-q_1\|_{L^2\II}.
\end{align*}
Now combining with \eqref{eq:acb0}, the desired estimate follows directly.  
\end{proof}

\begin{lemma}\label{lem:stab-00}
Let  $u (t;q_i,\bar v_i),\  i=1,2$  be the solution of \eqref{eqn:pde} with potential $q_i\in \A$ and  initial condition   $\bar v_i\in L^2\II$.   Then there holds that for any $t > 0$,
\begin{equation*}
\|\bar v_1-\bar v_2\|_{L^2\II} \le \tilde  c_{1,\bar v_2,f,D}(1+t^\al)\left( \|u (t;q_1,\bar v_1)-u (t;q_2,\bar v_2)\|_{\dot H^2\II}+ \|q_1-q_2\|_{L^2\II}\right),
\end{equation*}
where the  constant $\tilde c_{1,\bar v_2,f,D}>0$  independent of $q_1$, $ q_2$ and $t$, but may depend on $\bar v_2$, $f$ and  $D$.
\end{lemma}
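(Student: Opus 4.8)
The plan is to reproduce the Duhamel setup of Lemma \ref{lem:stab-0} and then \emph{invert} it to read off the initial data from the terminal state. Setting $\omega(t)=u(t;q_1,\bar v_1)-u(t;q_2,\bar v_2)$, the difference again solves $\Dal\omega+A_{q_1}\omega=\bar\omega(t)$ with $\omega(0)=\bar v_1-\bar v_2$ and source $\bar\omega(t)=(q_2-q_1)(u(t;q_2,\bar v_2)+D)$, so that $\omega(t)=F_{q_1}(t)\omega(0)+\int_0^tE_{q_1}(t-s)\bar\omega(s)\d s$ as in \eqref{eq:repomega}. Since the goal is to bound $\omega(0)$ by $\omega(t)$, I would apply $F_{q_1}(t)^{-1}$ and write $\omega(0)=F_{q_1}(t)^{-1}\omega(t)-F_{q_1}(t)^{-1}\int_0^tE_{q_1}(t-s)\bar\omega(s)\d s$, the admissibility of the inversion being guaranteed by $\|(A_qF_q(t))^{-1}\|\le c_0(1+t^\alpha)$ from Lemma \ref{lem:op}(i).

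The key algebraic step is to recycle the quantity $\phi(t)=\int_0^tF_{q_1}(t-s)\bar\omega(s)\d s$ already analyzed inside the proof of Lemma \ref{lem:stab-0}. Using $A_{q_1}E_{q_1}=-F_{q_1}'$ together with $F_{q_1}(0)=I$ and the Leibniz identity $\phi'(t)=\bar\omega(t)+\int_0^tF_{q_1}'(t-s)\bar\omega(s)\d s$, one finds $A_{q_1}\int_0^tE_{q_1}(t-s)\bar\omega(s)\d s=\bar\omega(t)-\phi'(t)$. Substituting and writing $F_{q_1}(t)^{-1}=(A_{q_1}F_{q_1}(t))^{-1}A_{q_1}$ yields the three-term decomposition
\begin{equation*}
\omega(0)=(A_{q_1}F_{q_1}(t))^{-1}A_{q_1}\omega(t)-(A_{q_1}F_{q_1}(t))^{-1}\bar\omega(t)+(A_{q_1}F_{q_1}(t))^{-1}\phi'(t).
\end{equation*}
The first term is bounded by $c_0(1+t^\alpha)\|A_{q_1}\omega(t)\|_{L^2\II}=c_0(1+t^\alpha)\|\omega(t)\|_{\dH2}$ via Lemma \ref{lem:op}(i), using the $q$-uniform equivalence between $\|A_{q_1}\cdot\|_{L^2\II}$ and the $H^2\II$-norm; this produces exactly the terminal-data contribution. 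The third term is handled verbatim by the estimate $\|(A_{q_1}F_{q_1}(t))^{-1}\phi'(t)\|_{L^2\II}\le c_{0,\bar v_2,f,D}\|q_1-q_2\|_{L^2\II}$ already established in the proof of Lemma \ref{lem:stab-0}.

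The main obstacle is the middle term $(A_{q_1}F_{q_1}(t))^{-1}\bar\omega(t)$: the crude bound $\|(A_{q_1}F_{q_1}(t))^{-1}\|\,\|\bar\omega(t)\|_{L^2\II}$ fails, because estimating $\|\bar\omega(t)\|_{L^2\II}$ forces the $L^\infty$-norm of $u(t;q_2,\bar v_2)+D$, which only satisfies a bound of order $(1+t^{-\alpha})$ and would create a $t^{-\alpha}$ blow-up against the factor $(1+t^\alpha)$. To avoid this I would replace the operator norm by a spectral splitting: since $E_{\alpha,1}(-x)\ge(1+\Gamma(1-\alpha)x)^{-1}$ gives the eigenvalue estimate $(\lambda_n E_{\alpha,1}(-\lambda_n t^\alpha))^{-1}\le\lambda_n^{-1}+\Gamma(1-\alpha)t^\alpha$, one obtains
\begin{equation*}
\|(A_{q_1}F_{q_1}(t))^{-1}\bar\omega(t)\|_{L^2\II}\le\sqrt2\,\|A_{q_1}^{-1}\bar\omega(t)\|_{L^2\II}+\sqrt2\,\Gamma(1-\alpha)\,t^\alpha\|\bar\omega(t)\|_{L^2\II}.
\end{equation*}
The first piece is controlled by Lemma \ref{Lem:sobemb}, $\|A_{q_1}^{-1}\bar\omega(t)\|_{L^2\II}\le c_1\|q_1-q_2\|_{L^2\II}\|u(t;q_2,\bar v_2)+D\|_{L^2\II}\le c\|q_1-q_2\|_{L^2\II}$, using the uniform bound $\|u(t;q_2,\bar v_2)\|_{L^2\II}\le c_{\bar v_2,f,D}$ of Lemma \ref{lem:Dalu}. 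For the second piece the prefactor $t^\alpha$ is precisely what rescues the estimate: bounding $\|\bar\omega(t)\|_{L^2\II}\le\|u(t;q_2,\bar v_2)+D\|_{L^\infty\II}\|q_1-q_2\|_{L^2\II}$ and invoking $\|u(t;q_2,\bar v_2)\|_{L^\infty\II}\le c\|u(t;q_2,\bar v_2)\|_{\dH2}\le c(1+t^{-\alpha})$ (Lemma \ref{lem:Dalu} and the Sobolev embedding $H^2\II\hookrightarrow L^\infty\II$, valid for $d\le3$), together with $D\in L^\infty\II$, gives $t^\alpha\|\bar\omega(t)\|_{L^2\II}\le c\,t^\alpha(1+t^{-\alpha})\|q_1-q_2\|_{L^2\II}\le c(1+t^\alpha)\|q_1-q_2\|_{L^2\II}$. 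Collecting the three terms and using the equivalence between $\dH2$ and $H^2\II$ then yields the asserted inequality with a constant of the form $\tilde c_{1,\bar v_2,f,D}(1+t^\alpha)$.
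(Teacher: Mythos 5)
Your argument is correct, and it reaches the stated bound by a genuinely different route from the paper. The paper inverts each solution representation separately, writing $\bar v_i=F_{q_i}(t)^{-1}\big(u(t;q_i,\bar v_i)-\int_0^tE_{q_i}(s)(f-q_iD)\,\d s\big)$, and must then control the difference of the two inverse operators (its term ${\rm II_1}$, reduced to $(F_{q_2}(t)-F_{q_1}(t))F_{q_1}(t)^{-1}\bar v_2$) and the difference of the two source contributions (its term ${\rm II_2}$); each of these is handled by setting up an auxiliary evolution equation, splitting the resulting Duhamel integral at $t/2$, and integrating by parts. You instead form a single difference equation with the frozen operator $A_{q_1}$ and commutator source $\bar\omega=(q_2-q_1)(u(t;q_2,\bar v_2)+D)$, invert $F_{q_1}(t)$ once, and reduce the whole estimate to (a) the bound $\|(A_{q_1}F_{q_1}(t))^{-1}\phi'(t)\|_{L^2\II}\le c\|q_1-q_2\|_{L^2\II}$ already proved inside Lemma \ref{lem:stab-0}, and (b) the pointwise spectral inequality $\|(A_{q_1}F_{q_1}(t))^{-1}w\|_{L^2\II}\lesssim\|A_{q_1}^{-1}w\|_{L^2\II}+t^{\al}\|w\|_{L^2\II}$, which is the one new ingredient and correctly trades the $(1+t^{\al})$ operator-norm factor against the $t^{-\al}$ growth of $\|u(t;q_2,\bar v_2)\|_{L^\infty\II}$ — you are right that the crude operator-norm bound would otherwise ruin the small-$t$ behaviour. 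Your route is shorter and reuses more of Lemma \ref{lem:stab-0}; the paper's is self-contained in its treatment of the potential-perturbation terms but requires two extra integration-by-parts computations. The remaining steps (the $\dH{2}$/$H^2\II$ norm equivalence uniform over $q\in\A$, Lemma \ref{Lem:sobemb} for $\|A_{q_1}^{-1}\bar\omega\|_{L^2\II}$, and Lemma \ref{lem:Dalu} with Sobolev embedding for $\|\bar\omega\|_{L^2\II}$) are all used exactly as the paper itself uses them, so the proposal stands.
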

\begin{proof}
    Employing the solution representation  \eqref{eq:solutionrepre} yields
    \begin{align*}
        \bar v_i= F_{q_i}(t)^{-1}\left(u (t;q_i,\bar v_i)- \int_0^tE_{q_i}(s)(f- {q_i}D)\d s \right).
    \end{align*}
   Therefore, we can find that 
    \begin{align*}
        \bar v_1-\bar v_2=&(F_{q_1}(t)^{-1}-F_{q_2}(t)^{-1})\left(u (t;q_2,\bar v_2)- \int_0^tE_{q_2}(s)(f- {q_2}D)\d s \right)\\&-F_{q_1}(t)^{-1} \left(\int_0^tE_{q_1}(s)(f- {q_1}D)\d s-\int_0^tE_{q_2}(s)(f- {q_2}D)\d s\right)\\
        &+F_{q_1}(t)^{-1}\left(u (t;q_1,\bar v_1)-u(t;q_2,\bar v_2) \right)=:{\rm II_1}(t) +{\rm II_2}(t)+{\rm II_3}(t).
    \end{align*}
     We now derive the bounds for ${\rm II_1}(t), {\rm II_2}(t)$, and ${\rm II_3}(t)$, separately. Firstly,  it holds that  
    \begin{align*}
       {\rm II_1}(t)
       &=(F_{q_2}(t)-F_{q_1}(t))F_{q_1}(t)^{-1}F_{q_2}(t)^{-1}\left(u(t;q_2,\bar v_2)- \int_0^tE_{q_2}(s)(f- {q_2}D)\d s \right)\\
       &= (F_{q_2}(t)-F_{q_1}(t))F_{q_1}(t)^{-1}\bar v_2.
    \end{align*}
    Let  ${\rm II}_1^i(s)=F_{q_i}(s)F_{q_1}(t)^{-1}\bar v_2$, $i=1,2$. 
Then ${\rm II_1}(t)={\rm II_1^2}(t)-{\rm II_1^1}(t)$ solves
    \begin{align*}
        \Dal {\rm II_1}+A_{q_2}{\rm II_1}= (q_1-q_2){\rm II_1^1}\quad \text{with }\quad  {\rm II_1}(0)=0.
    \end{align*}
Using solution representation \eqref{eq:repomega} and the  identity $A_qE_q(t)=-F_q'(t)$ gives
   \begin{align*}
    {\rm II_1}(t)
    =&\int_0^{\frac{t}{2}}E_{q_2}(t-s)(q_1-q_2)F_{q_1}(s)F_{q_1}(t)^{-1}\bar v_2\ \ds\\&
  - \int^t_{\frac{t}{2}}F'_{q_2}(t-s)A_{q_2}^{-1}(q_1-q_2)F_{q_1}(s)F_{q_1}(t)^{-1}\bar v_2\ \ds=:{\rm II_{1,1}}(t)+{\rm II_{1,2}}(t).
\end{align*} 
Applying  Lemma~\ref{lem:op} and Lemma~\ref{Lem:sobemb} to the term ${\rm II_{1,1}}(t)$ leads to
    \begin{align*}
    \lnorm{\rm II_{1,1}}(t)\rnorm _{L^2\II}&\le \int_0^{\frac{t}{2}}\lnorm A_{q_2}E_{q_2}(t-s)\rnorm\lnorm A_{q_2}^{-1}(q_1-q_2)F_{q_1}(s)F_{q_1}(t)^{-1}\bar v_2\rnorm_{L^2\II}\ds\\
    &\le c\|q_1-q_2\|_{L^2\II}\int_0^{\frac{t}{2}}(t-s)^{-1}\lnorm F_{q_1}(s)F_{q_1}(t)^{-1}\bar v_2\rnorm _{L^2\II}\ \ds\\
       &\le c\|q_1-q_2\|_{L^2\II}\int_0^{\frac{t}{2}}(t-s)^{-1}(1+s^{-\al}t^\al)\ \ds\|\bar v_2\|_{L^2\II}
        \le  c_{1,\bar v_2}\|q_1-q_2\|_{L^2\II}.
\end{align*}
Employing  integration by parts to ${\rm II_{1,2}}(t)$ yields
\begin{align*}
    {\rm II_{1,2}}(t)
    =&-A_{q_2}^{-1}(q_1-q_2)\bar v_2+F_{q_2}(\frac{t}{2})A_{q_2}^{-1}(q_1-q_2)F_{q_1}(\frac{t}{2})F_{q_1}(t)^{-1}\bar v_2\\&+\int^t_{\frac{t}{2}}F_{q_2}(t-s)A_{q_2}^{-1}(q_1-q_2)F'_{q_1}(s)F_{q_1}(t)^{-1}\bar v_2\    \ds.
\end{align*}
Therefore,  we can bound $ {\rm II_{1,2}}(t)$
by Lemma~\ref{lem:op} and Lemma~\ref{Lem:sobemb}: 
\begin{align*}
    \| {\rm II_{1,2}}(t)\|_{L^2\II}\le c\|q_1-q_2\|_{L^2\II}\left(1+\int^t_{\frac{t}{2}}(s^{-1}(1+s^{-\al}t^{\al})\ds\right) \|\bar v_2\|_{L^2\II}\le  c_{1,\bar v_2} \|q_1-q_2\|_{L^2\II}.
\end{align*}
We now analyze the second term ${\rm II_2 }(t)$.
Let ${\rm II}_2^i (t)=\int_0^tE_{q_i}(s)(f- {q_i}D)\d s$, $i=1,2$.
    We can find that ${\rm II_2^0}(t)= {\rm II_2^1}(t)- {\rm II_2^2}(t) $ solves 
   \begin{align*}
        \Dal {\rm II_2^0}+A_{q_1}{\rm II_2^0}= (q_2-q_1)({\rm II_2^2}(t)+D )\quad \text{with }\quad {\rm II_2^0}(0)=0.
    \end{align*}
  Employing solution representation \eqref{eq:repomega}, the identities  $A_qE_q(t) = - F_q'(t)$ and $ {\rm II_2}(t)=-F_{q_1}(t)^{-1}{\rm II_2^0}(t)$ gives
    \begin{align*}
       {\rm II_2}(t)=& -F_{q_1}(t)^{-1} \int_0^tE_{q_1}(t-s)(q_2-q_1)({\rm II_2^2}(s)+D )   \ \ds\\
       =&(A_{q_1}F_{q_1}(t))^{-1}\int_0^t F'_{q_1}(t-s)(q_2-q_1)({\rm II_2^2}(s)+D )\ \ds\\
      =& (A_{q_1}F_{q_1}(t))^{-1}\bigg(\partial_t\int_0^t F_{q_1}(t-s)(q_2-q_1)({\rm II_2^2}(s)+D )\ \ds-(q_2-q_1)({\rm II_2^2}(t)+D )\bigg).
    \end{align*}
    Now Lemmas~\ref{lem:op}--\ref{lem:Dalu}  imply
   \begin{equation}\label{Dlinfy}
     \begin{aligned}
        \lnorm(A_{q_1}F_{q_1}(t))^{-1}(q_2-q_1)({\rm II_2^2}(t)+D )\rnorm_{L^2\II}&\le c(1+t^\al)\|q_1-q_2\|_{L^2\II}\|{\rm II_2^2}(t)+D\|_{L^\infty \II}\\
        &\le c_{1,f,D}(1+t^\al)\|q_1-q_2\|_{L^2\II}.
    \end{aligned}  
   \end{equation} 
Additionally, from the proof of Lemma~\ref{lem:stab-0}, we can find that
\begin{align*}
      \lnorm(A_{q_1}F_{q_1}(t))^{-1}\partial_t\int_0^t F_{q_1}(t-s)(q_2-q_1)({\rm II_3^2}(s)+D )\ \ds \rnorm_{L^2\II}\le  c_{1,f,D}\|q_2-q_1\|_{L^2\II}.
\end{align*}
Last, applying Lemma~\ref{lem:op} to  the term  ${\rm II_3}(t)$ leads to
\begin{align*}
     \| {\rm II_3}(t)\|_{L^2\II}\le c(1+t^\al)\|u (t;q_1,\bar v_1)-u (t;q_2,\bar v_2)\|_{\dot H^2\II}.
\end{align*}
 The above analysis completes the proof of the lemma.
\end{proof}
Then, with the help of Lemmas 
\ref{lem:stab-0}--\ref{lem:stab-00}, we are ready to show that $K$  defined in \eqref{eqn:K} is a contraction mapping, and hence possesses a unique fixed point.

\begin{theorem}\label{lem:contK} 
Let  $K$ be the operator  defined in \eqref{eqn:K}.
For fixed $T_1$,  let $T_0$ be the solution of  
\begin{align}\label{T_0tild}
    M^{-1} \tilde  c_{0}T_0^{-\al}(\tilde  c_{1}(1+T_1^\al)+1)=\tilde c,
\end{align}
where $\tilde c $ is some constant in $(0,1)$,  $\tilde  c_{0}= \tilde c_{0,v^\dag-D,f,D}$ and $\tilde  c_{1}=\tilde c_{1,v^\dag-D,f,D}$ are the constants given by Lemmas~\ref{lem:stab-0} and \ref{lem:stab-00}.
Then there holds for $T_2>T_0$, $q_0\in \A$, the iteration 
$$q_{n+1}=Kq_n,\quad \forall\ n=0,1,\cdots,$$
linearly converges to the fixed point $q^\dag$, s.t.
$$\|q_{n+1}-q^\dag\|_{L^2\II}\le\tilde c \|q_{n}-q^\dag\|_{L^2\II}.$$
Consequently, the inverse potential and backward problem \eqref{eq:back_non} admits a unique solution $q^\dag$, and then $v^\dag$ can be uniquely solved from~\eqref{eq:backcontsolrep}.
\end{theorem}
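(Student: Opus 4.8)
The plan is to show that $K$ is a contraction centered at the exact potential $q^\dag$, from which linear convergence of the iteration and uniqueness both follow at once. Since the terminal data $g_1,g_2$ in \eqref{eq:back_non} are generated by the exact pair $(q^\dag,v^\dag)$, existence is built into the problem, and relation \eqref{eq:backcontsolrepq} shows that $q^\dag$ is a fixed point of $K$ (the truncation $P_{[0,m]}$ being inactive because $q^\dag\in\A$), while \eqref{eq:backcontsolrep} identifies $v^\dag=B(q^\dag,g_1)$. It therefore suffices to estimate $\|Kq-Kq^\dag\|_{L^2\II}$ for an arbitrary $q\in\A$.

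First I would strip off the truncation and the denominator. Since $P_{[0,m]}$ is $1$-Lipschitz and, by the maximum principle (Lemma \ref{positive}) together with $D\ge M>0$ from \eqref{eqn:cond1}, the state satisfies $U\ge D\ge M$, so $g_2\ge M$ and $\|1/g_2\|_{L^\infty\II}\le M^{-1}$. Using also $\Dal D=0$, this reduces the task to bounding $M^{-1}\|\Dal u(T_2;q,\bar v)-\Dal u(T_2;q^\dag,\bar v^\dag)\|_{L^2\II}$, where $\bar v=B(q,g_1)-D$ and $\bar v^\dag=v^\dag-D$. Applying Lemma \ref{lem:stab-0} at $t=T_2$ with reference potential $q^\dag$ (so the constant is exactly $\tilde c_0=\tilde c_{0,v^\dag-D,f,D}$) gives
\[
\|Kq-Kq^\dag\|_{L^2\II}\le M^{-1}\tilde c_0\,T_2^{-\al}\big(\|\bar v-\bar v^\dag\|_{L^2\II}+\|q-q^\dag\|_{L^2\II}\big).
\]

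The crux is then to control $\|\bar v-\bar v^\dag\|_{L^2\II}=\|B(q,g_1)-B(q^\dag,g_1)\|_{L^2\II}$ by $\|q-q^\dag\|_{L^2\II}$ alone. Here I would exploit the defining property of the backward operator: by construction $u(T_1;q,\bar v)+D=g_1=u(T_1;q^\dag,\bar v^\dag)+D$, so the two states coincide at $T_1$ and their $\dot H^2\II$ difference vanishes identically. Feeding this into Lemma \ref{lem:stab-00} at $t=T_1$ (again with reference potential $q^\dag$, yielding the constant $\tilde c_1=\tilde c_{1,v^\dag-D,f,D}$) and dropping the zero state-difference term gives $\|\bar v-\bar v^\dag\|_{L^2\II}\le\tilde c_1(1+T_1^\al)\|q-q^\dag\|_{L^2\II}$. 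Substituting back produces
\[
\|Kq-Kq^\dag\|_{L^2\II}\le M^{-1}\tilde c_0\,T_2^{-\al}\big(\tilde c_1(1+T_1^\al)+1\big)\|q-q^\dag\|_{L^2\II}.
\]

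Finally I would invoke the definition \eqref{T_0tild} of $T_0$: the prefactor equals $\tilde c\in(0,1)$ when $T_2=T_0$, and since $\al>0$ the map $T_2\mapsto T_2^{-\al}$ is strictly decreasing, so for every $T_2>T_0$ the prefactor stays below $\tilde c$. This establishes $\|Kq-q^\dag\|_{L^2\II}=\|Kq-Kq^\dag\|_{L^2\II}\le\tilde c\,\|q-q^\dag\|_{L^2\II}$; choosing $q=q_n$ gives the asserted linear convergence $\|q_{n+1}-q^\dag\|_{L^2\II}\le\tilde c\,\|q_n-q^\dag\|_{L^2\II}$, and applying the same estimate to any second fixed point forces it to coincide with $q^\dag$, which yields uniqueness and then $v^\dag$ via \eqref{eq:backcontsolrep}. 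The main obstacle is exactly the middle step: the constants in Lemmas \ref{lem:stab-0}--\ref{lem:stab-00} depend on the reference initial datum, and it is only by centering the argument at $q^\dag$ (so that $\bar v_2=v^\dag-D$) and by using the \emph{exact} cancellation $u(T_1;q,\bar v)=u(T_1;q^\dag,\bar v^\dag)$ built into $B(\cdot,g_1)$ that $\|\bar v-\bar v^\dag\|_{L^2\II}$ becomes governed solely by $\|q-q^\dag\|_{L^2\II}$, with precisely the constants $\tilde c_0,\tilde c_1$ that enter \eqref{T_0tild}.
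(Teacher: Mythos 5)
Your proposal is correct and follows essentially the same route as the paper: strip the $1$-Lipschitz truncation and the denominator via the maximum principle $g_2\ge M$, apply Lemma \ref{lem:stab-0} at $T_2$, control $\|B(q,g_1)-B(q^\dag,g_1)\|_{L^2\II}$ via Lemma \ref{lem:stab-00} at $T_1$ using the exact cancellation of the states there, and close with the definition \eqref{T_0tild} of $T_0$. Your explicit remarks on centering the constants at $\bar v_2=v^\dag-D$ and on the vanishing $\dot H^2$ difference at $T_1$ are points the paper uses only implicitly, but the argument is the same.
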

\begin{proof} By the definition \eqref{eqn:K},  the property that $|P_{[0,m]} (a)  - P_{[0,m]} (b) | \le |a-b|$ and the fact $\Dal U (t;q,v)=\Dal u (t;q,v-D)$, there holds
  \begin{align*}
| q_{n+1}-q^\dag |=| Kq_{n}-Kq^\dag |\le \left|\frac{\Dal u (T_2;q_{n},B(q_{n},g_1(x))-D) - \Dal u (T_2;q^\dag,B(q^\dag,g_1(x))-D) }{g_2(x) } \right|.
\end{align*}
Therefore, for $T_2>T_0$ Lemmas~\ref{lem:stab-0}--\ref{lem:stab-00} give
\begin{align*}
    \|q_{n+1}-q^\dag \|_{L^2\II}&\le \left\|\frac{\Dal u (T_2;q_n,B(q_n,g_1(x))-D) - \Dal u (T_2;q^\dag,B(q^\dag,g_1(x))-D) }{g_2(x)} \right\|_{L^2\II}\\&\le M^{-1}\tilde c_0 T_2^{-\al}\left(\|B(q_n,g_1(x))-B(q^\dag,g_1(x))\|_{L^2\II}+\|q_1-q_2\|_{L^2\II}\right)\\
    &\le M^{-1}\tilde c_0 T_2^{-\al}(\tilde c_1 (1+T_1^\al) +1)\|q_n-q^\dag\|_{L^2\II}\\
    &<\tilde c\|q_n-q^\dag\|_{L^2\II},
\end{align*}  
where we use the maximum principle Lemma~\ref{positive} to get  $g_2(x)\ge M$.
The operator $K$  admits a unique fixed point. As a result, the inverse potential and backward problem \eqref{eq:back_non} admits a unique solution $q^\dag\in\A$. 
\end{proof}

The next stability estimate is the main result of this section.
\begin{theorem}\label{thm:cond-stab}
 Let   $U(t;q_i, v_i),\  i=1,2$  be the solution of \eqref{eqn:pdeo} with potential $q_i\in \A$ and  initial condition  $ v_i\in L^2\II$   respectively. 
Suppose that $\bar{v}_i = v_i - D$, $f \in L^2(\Omega)$, and $D \in L^\infty(\Omega)$ with $D(x) \geq M > 0$ almost everywhere in $\Omega$, where $\bar{v}_i \geq 0$ and $f - q_i D \geq 0$ in $\Omega$. For any  fixed $ T_1> 0$,  
then  for any  $T_2> T_0$, there holds that 
 \begin{align*}
   \| q_1 - q_2 \|_{L^2\II} +\|v_1-v_2\|_{L^2\II} \le  C 
   \sum_{i=1}^2 \|  U(T_i;q_1, v_1) - U(T_i;q_2, v_2) \|_{\dot H^2\II}.
 \end{align*}
Here $T_0$  is the solution of \eqref{T_0tild} with $\tilde  c_{0}= \tilde c_{0,\bar v_2,f,D}$ and $\tilde  c_{1}=\tilde c_{1,\bar v_2,f,D}$ given by Lemmas~\ref{lem:stab-0}--\ref{lem:stab-00}, while the constant $C$ in the estimate may depend on $T_1$ 
but is independent of $q_1$, $q_2$, and $T_2$. 
\end{theorem}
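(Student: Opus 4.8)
The plan is to recycle the algebraic identity underlying the contraction map $K$ in Theorem~\ref{lem:contK}, but now with the two observational data sets playing the role of the source terms. Write $U_i := U(\cdot;q_i,v_i)$ and set the observation differences $w_i := U_1(T_i) - U_2(T_i)$, $i=1,2$. Evaluating the PDE in \eqref{eqn:pdeo} at $t=T_2$ for each pair $(q_i,v_i)$ gives $q_i\, U_i(T_2) = f - \Dal U_i(T_2) + \Delta U_i(T_2)$. Subtracting the two identities and decomposing $q_1 U_1(T_2) - q_2 U_2(T_2) = (q_1-q_2)U_2(T_2) + q_1 w_2$ yields
\begin{equation*}
(q_1-q_2)\,U_2(T_2) = -q_1 w_2 - \big(\Dal U_1(T_2) - \Dal U_2(T_2)\big) + \Delta w_2 .
\end{equation*}
Since $\bar v_2 \ge 0$ and $f - q_2 D \ge 0$, the maximum principle (Lemma~\ref{positive}) gives $u(T_2;q_2,\bar v_2)\ge 0$, hence $U_2(T_2) = u(T_2;q_2,\bar v_2)+D \ge M > 0$. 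Dividing by $U_2(T_2)$ and taking the $L^2\II$ norm, together with $\|q_1\|_{L^\infty\II}\le m$ and the continuous embedding $\|\cdot\|_{L^2\II} + \|\Delta\cdot\|_{L^2\II} \le c\|\cdot\|_{\dot H^2\II}$ (valid on $H^2\II\cap H^1_0\II$ with constant independent of $q$), bounds the contribution of $w_2$ by $c\,\|w_2\|_{\dot H^2\II}$.

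Next I would control the remaining term $\|\Dal U_1(T_2) - \Dal U_2(T_2)\|_{L^2\II}$. Using $\Dal U_i = \Dal u(\cdot;q_i,\bar v_i)$ (as $\Dal D = 0$) and Lemma~\ref{lem:stab-0} at $t = T_2$,
\begin{equation*}
\|\Dal U_1(T_2) - \Dal U_2(T_2)\|_{L^2\II} \le \tilde c_0\, T_2^{-\alpha}\big(\|\bar v_1 - \bar v_2\|_{L^2\II} + \|q_1 - q_2\|_{L^2\II}\big).
\end{equation*}
Because $\bar v_i = v_i - D$ we have $\|\bar v_1 - \bar v_2\|_{L^2\II} = \|v_1 - v_2\|_{L^2\II}$, and since $w_1 = u(T_1;q_1,\bar v_1) - u(T_1;q_2,\bar v_2)$, Lemma~\ref{lem:stab-00} at $t = T_1$ gives
\begin{equation*}
\|\bar v_1 - \bar v_2\|_{L^2\II} \le \tilde c_1 (1+T_1^\alpha)\big(\|w_1\|_{\dot H^2\II} + \|q_1 - q_2\|_{L^2\II}\big).
\end{equation*}
Substituting this back, and bounding $T_2^{-\alpha}\le T_0^{-\alpha}$ for $T_2>T_0$ to keep the constants independent of $T_2$, produces a self-referential inequality of the form
\begin{equation*}
\|q_1 - q_2\|_{L^2\II} \le M^{-1}\tilde c_0 T_2^{-\alpha}\big(\tilde c_1(1+T_1^\alpha)+1\big)\,\|q_1-q_2\|_{L^2\II} + C\big(\|w_1\|_{\dot H^2\II} + \|w_2\|_{\dot H^2\II}\big).
\end{equation*}

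The decisive step is the absorption argument. By the definition \eqref{T_0tild} of $T_0$, the coefficient $M^{-1}\tilde c_0 T_2^{-\alpha}(\tilde c_1(1+T_1^\alpha)+1)$ equals $\tilde c\in(0,1)$ at $T_2 = T_0$ and is strictly decreasing in $T_2$; hence for every $T_2 > T_0$ it is bounded by $\tilde c < 1$. Moving the $\|q_1-q_2\|_{L^2\II}$ term to the left-hand side and dividing by $1-\tilde c$ gives the desired bound on $\|q_1-q_2\|_{L^2\II}$ by $C\sum_{i=1}^2\|w_i\|_{\dot H^2\II}$. Finally, feeding this estimate into Lemma~\ref{lem:stab-00} once more (recalling $\|v_1-v_2\|_{L^2\II} = \|\bar v_1-\bar v_2\|_{L^2\II}$) yields the companion bound for $\|v_1-v_2\|_{L^2\II}$, and summing the two completes the proof. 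I expect the only genuine obstacle to be this absorption: the difference of the fractional derivatives unavoidably reintroduces $\|q_1-q_2\|_{L^2\II}$ on the right, and closing the estimate is possible solely because the large-time factor $T_2^{-\alpha}$ renders the self-coupling coefficient smaller than one — exactly the mechanism that makes $K$ a contraction in Theorem~\ref{lem:contK}. Care is also needed so that every $\dot H^2\II$ norm is read through the $q$-independent elliptic equivalence, ensuring $C$ depends on $T_1$ but not on $q_1,q_2$ or $T_2$.
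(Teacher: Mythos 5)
Your proposal is correct and follows essentially the same route as the paper: express $q_i$ from the PDE at $t=T_2$, use the maximum principle to get the lower bound $M$ on the denominator, invoke Lemma~\ref{lem:stab-0} for the fractional-derivative difference and Lemma~\ref{lem:stab-00} at $T_1$ for the initial-data difference, and close by absorbing the self-coupling term, which is possible precisely because $T_2>T_0$ makes its coefficient less than one. The only (harmless) deviation is your algebraic splitting $(q_1-q_2)U_2(T_2)=-q_1w_2-(\Dal U_1(T_2)-\Dal U_2(T_2))+\Delta w_2$, which is marginally simpler than the paper's term-by-term decomposition of the quotient difference since it avoids the Sobolev embedding for the $L^\infty$ bound on $w_2$ and the a priori bounds on $\Dal u$ and $\Delta u$ from Lemma~\ref{lem:Dalu}.
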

\begin{proof}
It follows from \eqref{eqn:pde}, $q_i$ could be expressed as
$q_i =(u(T_2;q_i,\bar v_i)+D)^{-1}(f-\Dal u (T_2;q_i,\bar v_i) + \Delta u(T_2;q_i,\bar v_i)).$
Then we split $q_1 - q_2$ into two parts:
\begin{align*}
 q_1 - q_2 = &\bigg\{f\frac{u(T_2;q_2,\bar v_2)-u(T_2;q_1,\bar v_1)}{(u(T_2;q_1,\bar v_1)+D)(u(T_2;q_2,\bar v_2)+D)} +\frac{  [u(T_2;q_1,\bar v_1)- u(T_2;q_2,\bar v_2)]\Dal u(T_2;q_1,\bar v_1)}{(u(T_2;q_1,\bar v_1)+D)(u(T_2;q_2,\bar v_2)+D)}  \\&\quad+ \frac{\Delta u(T_2;q_1,\bar v_1)-\Delta u(T_2;q_2,\bar v_2)}{u(T_2;q_1,\bar v_1)+D}+\frac{(u(T_2;q_2,\bar v_2)-u(T_2;q_1,\bar v_1))\Delta u(T_2;q_2,\bar v_2)}{(u(T_2;q_1,\bar v_1)+D)(u(T_2;q_2,\bar v_2)+D)}\bigg\}\\&+\frac{ \Dal u(T_2;q_2,\bar v_2)-\Dal u(T_2;q_1,\bar v_1)}{u(T_2;q_2,\bar v_2)+D}
\\=:&{\rm I}_{s,1}+{\rm I}_{s,2} .
   \end{align*}
Using the maximum principle Lemma~\ref{positive}, there holds $u(T_2)+D(x)\ge M$. Employing the standard Sobolev embedding theorem and   Lemma~\ref{lem:Dalu} gives 
\begin{align*}
   \| {\rm I}_{s,1}\|_{L^2\II} \le& M^{-2} \| f \|_{L^2\II}\| u(T_2;q_2,\bar v_2)-u(T_2;q_1,\bar v_1) \|_{L^\infty\II}\\& +M^{-2}\|\Dal u(T_2;q_2,\bar v_1)\|_{L^2\II}\|u(T_2;q_2,\bar v_2) -  u(T_2;q_1,\bar v_1)\|_{L^\infty\II}\\&+ M^{-1} \| \Delta (u(T_2;q_2,\bar v_2) -  u(T_2;q_1,\bar v_1)) \| _{L^2\II} \\&+M^{-2} \| \Delta u(T_2;q_2,\bar v_2) \|_{L^2\II} \|  u(T_2;q_2,\bar v_2) - u(T_2;q_1,\bar v_1)\|_{L^\infty\II}\\
   \le& c(1+T_2^{-\al})\|  (u(T_2;q_1,\bar v_1)-u(T_2;q_2,\bar v_2)) \|_{\dot H^2\II}.
\end{align*}
Besides,  from Lemma~\ref{lem:Dalu} and  Lemma~\ref{lem:stab-0} we arrive at 
\begin{equation*}
\begin{aligned}
\| {\rm I}_{s,2}\|_{L^2\II} 
\le M^{-1} \| \Dal (u(T_2;q_2,\bar v_2) -  u(T_2;q_1,\bar v_1)) \| _{L^2\II} 
\le M^{-1}\tilde c_0  T_2^{-\al}\left(\|\bar v_1-\bar v_2\|_{L^2\II} +\|q_2-q_1\|_{L^2\II}  \right) .
   \end{aligned}
\end{equation*}
Therefore, it holds that
\begin{equation*}
\begin{aligned}
    \| q_1 - q_2  \|_{L^2\II}  &\le   M^{-1}\tilde c_0T_2^{-\al} \left(\|\bar v_1-\bar v_2\|_{L^2\II} + \|q_2-q_1\|_{L^2\II}\right)  \\
    &\qquad +c(1+T_2^{-\al}) \|  u(T_2;q_1,\bar v_1) - u(T_2;q_2,\bar v_2)\|_{\dot H^2\II} .
\end{aligned}
\end{equation*}
Additionally,  Lemma~\ref{lem:stab-00} gives 
\begin{equation}\label{T_1}
  \| v_1- v_2\|_{L^2\II}  \le  \tilde c_1 (1+T_1^\al)\left(\|u(T_1;q_1,\bar v_1)-u(T_1;q_2,\bar v_2)\|_{\dot H^2\II}+\| q_1 - q_2  \|_{L^2\II}\right) .
\end{equation}
As a result, we derive that 
\begin{align*}
      \| q_1 - q_2  \|_{L^2\II} \le& M^{-1}\tilde  c_0 T^{-\al}_2 (\tilde c_1(1+T_1^\al)+1)\|q_2-q_1\|_{L^2\II}\\&  + c(1+T_2^{-\al})\|  u(T_2;q_1,\bar v_1) - u(T_2;q_2,\bar v_2)\|_{\dot H^2\II}\\&+M^{-1}\tilde  c_0 T^{-\al}_2(1+T_1^\al)\|  u(T_1;q_1,\bar v_1) - u(T_1;q_2,\bar v_2)\|_{\dot H^2\II}.
\end{align*}
Hence for  $T_2> T_0$, we have $M^{-1}\tilde  c_0 T^{-\al}_2 (\tilde c_1(1+T_1^\al)+1)<1$, which implies
$$ \| q_1 - q_2  \|_{L^2\II} \le  c\left( \|  u(T_1;q_1,\bar v_1) - u(T_1;q_2,\bar v_2)  \|_{\dot H^2\II}+\|  u(T_2;q_1,\bar v_1) - u(T_2;q_2,\bar v_2)  \|_{\dot H^2\II} \right). $$
Now combining with \eqref{T_1}  and the fact $U(T_j;q_i, v_i)=u(T_j;q_i, \bar v_i)$ for $i,j=1,2$, the desired inequality follows directly. This completes the proof of the theorem.
\end{proof}
\section{Regularization  and convergence analysis for backward problem}\label{sec:regu} 


Theorem~\ref{thm:cond-stab} indicates that the inverse problem is mildly ill-posed, exhibiting a loss equivalent to a second-order derivative. However, 
the observational data typically contain noise and hence not belong to the ${H}^2(\Omega)$ space.  Therefore, a suitable regularization is necessary.

Here, to overcome the ill-posedness of the backward problem, we apply the following quasi-boundary value method for regularization \cite{hao2019stability}.
Let  $U_\gamma^\delta(t;q)=u_\gamma^\delta(t;q)+D$ be the  regularized solution with  $u_\gamma^\delta(t;q)$ satisfying
 \begin{equation}\label{eq:back_non_reg:noisy}
 \begin{cases}
  \begin{aligned}
     \Dal u_\gamma^\delta(t;q)+A_q u_\gamma^\delta(t;q)&=f(x)-q(x)D(x), &&(x,t)\in \Omega\times(0,T],\\
      u_\gamma^\delta(t;q)&=0,&&(x,t)\in \partial\Omega\times(0,T],\\
       \gamma u_\gamma^\delta(0;q)+ u_\gamma^\delta(T_1;q)&=g_1^\delta(x)-D:=\bar g_1^\delta,&&x\in\Omega.
  \end{aligned}
  \end{cases}
 \end{equation}
Here $\gamma$ denotes a positive regularization parameter. 

 Then we intend to examine the error 
$U_\gamma^\delta(0;q)-U(0,q)=u_\gamma^\delta(0;q) - u(0;q)$. 
To this end, we consider an auxiliary function $u_\gamma(t;q)$ that satisfies
\begin{equation}\label{eqn:back_non_reg:}
 \begin{cases}
  \begin{aligned}
   \Dal u_\gamma(t;q) +A_q u_\gamma(t;q)&=f(x)-q(x)D(x), &&(x,t)\in \Omega\times(0,T],\\
    u_\gamma(t;q)&=0,&&(x,t)\in \partial\Omega\times(0,T],\\
       \gamma  u_\gamma(0;q)+  u_\gamma(T_1;q)&=g_1(x)-D:=\bar g_1,&&x\in\Omega.
  \end{aligned}
  \end{cases}
 \end{equation} 
 Using solution representation \eqref{eq:solutionrepre}, and the conditions $ \gamma  u_\gamma^\delta(0;q)+  u_\gamma^\delta(T_1;q)=\bar  g_1^\delta(x)$, $\gamma  u_\gamma(0;q)+  u_\gamma(T_1;q)=\bar g_1(x)$,  we can  derive  that 
 \begin{align}
      u_\gamma^\delta(0;q)&=(\gamma I+F_{q}(T_1))^{-1}( \bar g_1^\delta-(I-F_{q}(T_1)) A_q^{-1}(f-qD))\label{solu:ugamdel},\\
           u_\gamma(0;q)&=(\gamma I+F_{q}(T_1))^{-1}( \bar g_1-(I-F_{q}(T_1)) A_q^{-1}(f-qD)).\label{solu:ugam}
 \end{align}
 

The following lemma describes the smoothing properties of the solution operator $(\gamma I + F_q(T))^{-1}$. 
Since the proof is analogous to that in \cite[Lemma~15.1]{JinZhou:2023book}, it is omitted here for brevity.
\begin{lemma}\label{lemma:gammainv}
 Let $F_q(t)$ be the discrete solution operator defined in \eqref{eqn:FE-MLcop}.
For $v\in \dot H^s(\Omega)$, there holds
    \begin{align*}
        \|(\gamma I +F_q(T))^{-1}v\|_{L^2\II}&\le c\gamma^{-\frac{2-s}{2}}\|v\|_{\dot H^s(\Omega)}~~\text{and}\\
         \|F_q(t)(\gamma I +F_q(T))^{-1}v\|_{L^2\II}&\le c\min(\gamma^{-1},t^{-\al})\|  v \|_{L^2\II},
    \end{align*}
where the constant $c$  may depend on $T$ but  is independent of $\gamma$, $t$.
\end{lemma}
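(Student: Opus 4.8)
The plan is to diagonalize $(\gamma I + F_q(T))^{-1}$ in the eigensystem $\{(\lambda_n(q),\varphi_n(q))\}$ of $A_q$ and reduce both bounds to scalar multiplier inequalities for the Fourier coefficients. Writing $\mu_n := E_{\alpha,1}(-\lambda_n(q)T^\al)$, the spectral formula for $F_q(T)$ recalled in the proof of Lemma~\ref{lem:op} gives
\[
(\gamma I + F_q(T))^{-1} v = \sum_{n=1}^\infty \frac{1}{\gamma+\mu_n}(v,\varphi_n(q))\,\varphi_n(q),
\]
so by Parseval's identity it suffices to control the multipliers $1/(\gamma+\mu_n)$ and $E_{\alpha,1}(-\lambda_n(q)t^\al)/(\gamma+\mu_n)$. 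The two-sided Mittag--Leffler bound used in Lemma~\ref{lem:op}, namely $(1+\Gamma(1-\al)x)^{-1}\le E_{\alpha,1}(-x)\le (1+\Gamma(1+\al)^{-1}x)^{-1}$, yields $\mu_n\in(0,1]$ and $\mu_n\asymp(1+\lambda_n(q)T^\al)^{-1}$. Since $q\ge 0$ forces $\lambda_1(q)\ge\lambda_1(0)=:\lambda_1^*>0$ uniformly over $q\in\A$ (by the min--max principle), I can absorb the additive $1$ into $\lambda_n(q)T^\al$ and obtain the lower comparison $\mu_n\ge c_T^{-1}\lambda_n(q)^{-1}$, with $c_T$ depending only on $T,\al,\lambda_1^*$. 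This uniform spectral gap is exactly what keeps every constant below independent of $q$.

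For the first estimate I would invoke the elementary interpolation inequality $\gamma+\mu_n\ge\gamma^{\theta}\mu_n^{1-\theta}$, valid for any $\theta\in[0,1]$ since $a+b\ge\max(a,b)\ge a^\theta b^{1-\theta}$. Choosing $\theta=1-\tfrac{s}{2}$ gives $(\gamma+\mu_n)^{-2}\le\gamma^{-(2-s)}\mu_n^{-s}\le c_T\,\gamma^{-(2-s)}\lambda_n(q)^{s}$, and summing against $(v,\varphi_n(q))^2$ reproduces exactly $c\gamma^{-(2-s)}\|v\|_{\dH s}^2$ by the very definition of the $\dH s$ norm. For the second estimate I split the multiplier into its two competing regimes: using $E_{\alpha,1}(-\lambda_n(q)t^\al)\le 1$ and $\gamma+\mu_n\ge\gamma$ produces the factor $\gamma^{-1}$, whereas using $\gamma+\mu_n\ge\mu_n\ge c_T^{-1}\lambda_n(q)^{-1}$ together with the decay $E_{\alpha,1}(-\lambda_n(q)t^\al)\le\Gamma(1+\al)(\lambda_n(q)t^\al)^{-1}$ cancels $\lambda_n(q)$ and leaves the factor $t^{-\al}$. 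Taking the smaller of the two and applying Parseval again gives the $\min(\gamma^{-1},t^{-\al})\|v\|_{L^2\II}$ bound.

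The argument is a routine spectral-multiplier computation, so the genuine obstacle is bookkeeping rather than an essential difficulty: I must verify that the Mittag--Leffler asymptotics convert the factors $(1+\lambda_n(q)T^\al)$ into pure powers of $\lambda_n(q)$ with constants uniform in both $n$ and $q$. The uniformity in $q$ is the delicate point, and it rests precisely on the $q$-independent lower bound $\lambda_1(q)\ge\lambda_1^*$ guaranteed by $q\in\A$; without it the constant $c_T$ would degenerate as the spectrum of $A_q$ approaches the origin. Everything else follows mechanically once the interpolation exponent $\theta=1-\tfrac{s}{2}$ is chosen to match the power of $\gamma$ prescribed by the target estimate.
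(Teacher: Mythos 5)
Your proof is correct, and it is essentially the argument the paper defers to (the spectral-multiplier computation of \cite[Lemma~15.1]{JinZhou:2023book}): diagonalize in the eigenbasis of $A_q$, use the two-sided Mittag--Leffler bounds to get $\mu_n\asymp(1+\lambda_n(q)T^\al)^{-1}$, and apply $\gamma+\mu_n\ge\gamma^{1-s/2}\mu_n^{s/2}$ for the first estimate and the two-regime splitting for the second. Your observation that the uniform lower bound $\lambda_1(q)\ge\lambda_1(0)$ for $q\in\A$ is what keeps the constants independent of $q$ is exactly the right point to flag.
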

We now give the bound of $ u_\gamma^\delta(0;q)- u_\gamma(0;q)$ and $ u_\gamma(0;q)- u(0;q)$.
 \begin{lemma}\label{lemma:gammacon} 
Let $ u(t;q) $ be the solution of \eqref{eqn:pde} with $ u(0,q) \in \dot {H}^s\II $ for $ s \in [0,2] $ and $  u(T_1;q) = g_1-D $. Consider $ u_\gamma^\delta(0;q)  $  and $ u_\gamma(0;q)  $   satisfying \eqref{solu:ugamdel} and \eqref{solu:ugam}, respectively. Then, the following estimate holds:
\begin{align*}
    \| u_\gamma^\delta(0;q)- u_\gamma(0;q)\|_{L^2\II}\le c\gamma^{-1}\delta \quad\text{and } \quad \| u_\gamma(0;q)- u(0;q)\|_{L^2\II}\le c\gamma^\frac{s}{2}.
\end{align*}
\end{lemma}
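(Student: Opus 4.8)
The plan is to exploit the explicit solution formulas \eqref{solu:ugamdel}--\eqref{solu:ugam} together with the resolvent bounds in Lemma~\ref{lemma:gammainv} and the spectral representation of $F_q(T_1)$. The two estimates are handled independently.

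For the first bound I would simply subtract the two representations. Since the forcing contribution $(I-F_q(T_1))A_q^{-1}(f-qD)$ is identical in both formulas, everything cancels except the data term, leaving
\begin{equation*}
u_\gamma^\delta(0;q)-u_\gamma(0;q)=(\gamma I+F_q(T_1))^{-1}(\bar g_1^\delta-\bar g_1)=(\gamma I+F_q(T_1))^{-1}(g_1^\delta-g_1),
\end{equation*}
where I used $\bar g_1^\delta-\bar g_1=(g_1^\delta-D)-(g_1-D)=g_1^\delta-g_1$. Applying Lemma~\ref{lemma:gammainv} with $s=0$ and the noise bound $\|g_1^\delta-g_1\|_{\L2Om}\le\delta$ immediately yields $\|u_\gamma^\delta(0;q)-u_\gamma(0;q)\|_{\L2Om}\le c\gamma^{-1}\delta$.

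For the second bound, the first step is to rewrite the difference as a single operator acting on the exact initial value. Writing $w:=\bar g_1-(I-F_q(T_1))A_q^{-1}(f-qD)$, the exact relation $u(T_1;q)=\bar g_1$ combined with the representation \eqref{eq:solutionrepre} gives $u(0;q)=F_q(T_1)^{-1}w$, while $u_\gamma(0;q)=(\gamma I+F_q(T_1))^{-1}w$. The resolvent identity $(\gamma I+F_q(T_1))^{-1}-F_q(T_1)^{-1}=-\gamma(\gamma I+F_q(T_1))^{-1}F_q(T_1)^{-1}$ then collapses the difference into
\begin{equation*}
u_\gamma(0;q)-u(0;q)=-\gamma(\gamma I+F_q(T_1))^{-1}u(0;q).
\end{equation*}

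The remaining, and main, step is a spectral estimate for $\gamma(\gamma I+F_q(T_1))^{-1}$ on $\dH s$. Using the eigenpairs $\{(\la_n(q),\fy_n(q))\}$, the operator $F_q(T_1)$ has eigenvalues $\mu_n=E_{\al,1}(-\la_n(q)T_1^\al)$, so I must control the multiplier $\gamma/(\gamma+\mu_n)$. Since $0\le \gamma/(\gamma+\mu_n)\le 1$ and $\gamma/(\gamma+\mu_n)\le \gamma/\mu_n$, interpolating with exponent $s/2\in[0,1]$ gives $\gamma/(\gamma+\mu_n)\le \gamma^{s/2}\mu_n^{-s/2}$. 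The hard part is then bounding $\mu_n^{-s/2}$ by $\la_n^{s/2}$ uniformly in $q$: here I invoke the Mittag--Leffler lower bound $E_{\al,1}(-t)\ge (1+\Gamma(1-\al)t)^{-1}$ already recorded in the proof of Lemma~\ref{lem:op}, which yields $\mu_n^{-1}\le 1+\Gamma(1-\al)\la_n(q)T_1^\al\le c\,\la_n(q)$, where uniformity in $q$ follows because $q\ge0$ forces $\la_1(q)\ge\la_1(0)>0$. Combining these estimates gives $\gamma/(\gamma+\mu_n)\le c\gamma^{s/2}\la_n(q)^{s/2}$, and Parseval's identity produces
\begin{equation*}
\|\gamma(\gamma I+F_q(T_1))^{-1}u(0;q)\|_{\L2Om}^2=\sum_{n}\frac{\gamma^2}{(\gamma+\mu_n)^2}(u(0;q),\fy_n(q))^2\le c\gamma^{s}\|u(0;q)\|_{\dH s}^2.
\end{equation*}
Taking square roots and using $u(0;q)\in\dH s$ then delivers $\|u_\gamma(0;q)-u(0;q)\|_{\L2Om}\le c\gamma^{s/2}$, completing the proof.
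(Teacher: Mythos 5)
Your proposal is correct and follows essentially the same route as the paper: both proofs cancel the common forcing term to reduce the first bound to $(\gamma I+F_q(T_1))^{-1}(g_1^\delta-g_1)$, and both rewrite the second difference as $\gamma(\gamma I+F_q(T_1))^{-1}u(0;q)$ before applying the resolvent smoothing bound. The only difference is that the paper simply invokes Lemma~\ref{lemma:gammainv} for the final multiplier estimate, whereas you re-derive that bound spectrally via the Mittag--Leffler lower bound and interpolation of $\gamma/(\gamma+\mu_n)$ --- a correct, self-contained substitute for the cited lemma.
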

\begin{proof}
From \eqref{solu:ugamdel},  \eqref{solu:ugam} and Lemma~\ref{lemma:gammainv}, there holds
\begin{align*}
     \| u_\gamma^\delta(0;q)- u_\gamma(0;q)\|_{L^2\II} =\|(\gamma I+F_{q}(T_1))^{-1}(g_1^\delta-g_1)\|_{L^2\II}\le c\gamma^{-1}\delta.
\end{align*}
   Furthermore, the solution representation \eqref{eq:solutionrepre} gives 
   \begin{align*}
          u(0;q)=F_{q}(T_1)^{-1}( \bar g_1-(I-F_{q}(T_1)) A_q^{-1}(f-qD)).
   \end{align*}
   Consequently,  combining with Lemma~\ref{lemma:gammainv}, we can find that
   \begin{align*}
        \|  u_\gamma(0;q)- u(0;q)\|_{L^2\II}=&\|\gamma (\gamma I+F_q(T_1))^{-1}F_{q}(T_1)^{-1}(  \bar g_1-(I-F_{q}(T_1)) A_q^{-1}(f-qD))\|_{L^2\II}\\
       =&\|\gamma (\gamma I+F_q(T_1))^{-1}u(0;q)\|_{L^2\II}\le c\gamma^\frac{s}{2}\| u(0;q)\|_{ \dot H^s\II}.
   \end{align*}
This completes the proof of the lemma.
\end{proof}
At the end of this section, we present the following regularity of $ u_\gamma(q, 0)$ and $\Dal  u_\gamma(t;q)$, which is extensively used in the numerical analysis.
\begin{lemma}\label{lem:regugamma}  
Let  $ u_\gamma(t;q) $ satisfy \eqref{eqn:back_non_reg:}. 
Then for $s\in [0,2]$, there holds
\begin{equation*}
     \| u _\gamma(0;q)\|_{ \dot H^s\II}\le c\gamma^{-\frac {s}{2}}, \quad t^{\al}\| \Dal  u_\gamma(t;q)\|_{\dot H^2\II}+t^{\al+1}\| (\Dal  u_\gamma(t;q))'\|_{\dot H^2\II}\le c\gamma^{-1}.
 \end{equation*}
\end{lemma}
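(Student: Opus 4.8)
The plan is to derive both estimates from the solution representation \eqref{solu:ugam}, which I rewrite as $u_\gamma(0;q) = (\gamma I + F_q(T_1))^{-1}w$ with $w := \bar g_1 - (I - F_q(T_1))A_q^{-1}(f-qD)$. The first preparatory step is to observe that $w \in \dot H^2\II$ with $\|w\|_{\dot H^2\II} \le c$ \emph{uniformly in} $q \in \A$: the source part $A_q^{-1}(f-qD)$ lies in $\dot H^2\II$ since $A_q^{-1}$ maps $L^2\II$ isometrically into $\dot H^2\II$, while $\bar g_1 = g_1 - D$ is the forward image $u(T_1;q^\dag,\bar v^\dag)$ and hence belongs to $\dot H^2\II$ by Lemma~\ref{lem:Dalu}, the $\dot H^2$-norms for distinct $q\in\A$ being equivalent by the elliptic regularity recorded earlier. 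For the first estimate I would then use that $A_q^{s/2}$ commutes with $(\gamma I + F_q(T_1))^{-1}$ to write $\|u_\gamma(0;q)\|_{\dot H^s\II} = \|(\gamma I + F_q(T_1))^{-1}A_q^{s/2}w\|_{L^2\II}$ and apply Lemma~\ref{lemma:gammainv} to $A_q^{s/2}w \in \dot H^{2-s}\II$, producing the factor $\gamma^{-(2-(2-s))/2} = \gamma^{-s/2}$ directly.

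For the second estimate the key reformulation is to split off the time-independent part of the solution. Setting $\zeta := u_\gamma(0;q) - A_q^{-1}(f-qD)$, the representation \eqref{eq:solutionrepre} gives $u_\gamma(t;q) = F_q(t)\zeta + A_q^{-1}(f-qD)$. Since the Djrbashian--Caputo derivative annihilates the constant-in-time term and $F_q(t)\zeta$ solves the homogeneous fractional equation with datum $\zeta$, this yields the clean identity $\Dal u_\gamma(t;q) = -A_q F_q(t)\zeta$. Taking $s=2$ in the first estimate together with $A_q^{-1}(f-qD)\in\dot H^2\II$ shows $\zeta \in \dot H^2\II$ with $\|\zeta\|_{\dot H^2\II} \le c\gamma^{-1}$, equivalently $A_q\zeta \in L^2\II$ with $\|A_q\zeta\|_{L^2\II}\le c\gamma^{-1}$; this single gained power of $A_q$ is what makes the argument close.

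With this in hand the first half of the second estimate is immediate: $\|\Dal u_\gamma(t;q)\|_{\dot H^2\II} = \|A_q^2 F_q(t)\zeta\|_{L^2\II} = \|A_q F_q(t)(A_q\zeta)\|_{L^2\II} \le \|A_q F_q(t)\|\,\|A_q\zeta\|_{L^2\II} \le c t^{-\al}\gamma^{-1}$ by Lemma~\ref{lem:op}(i). For the temporal derivative, differentiating in $t$ and using $F_q'(t) = -A_q E_q(t)$ gives $(\Dal u_\gamma(t;q))' = A_q^2 E_q(t)\zeta$, so that $\|(\Dal u_\gamma(t;q))'\|_{\dot H^2\II} = \|A_q^2 E_q(t)(A_q\zeta)\|_{L^2\II}$.

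The main obstacle is precisely this last quantity: it requires controlling $\|A_q^2 E_q(t)\|$, i.e.\ two powers of $A_q$ against $E_q(t)$, whereas Lemma~\ref{lem:op}(i) only bounds $A_q^\nu E_q(t)$ for $\nu \le 1$. I would resolve it directly from the spectral representation $E_q(t)v = \sum_n t^{\al-1}E_{\alpha,\alpha}(-\lambda_n(q)t^\al)(v,\fy_n(q))\fy_n(q)$ together with the bound $E_{\alpha,\alpha}(-x)\le c/(1+x^2)$ used in the proof of Lemma~\ref{lem:op}: the multiplier $\lambda_n^2 t^{\al-1}E_{\alpha,\alpha}(-\lambda_n t^\al)$ is $\le c t^{-\al-1}$ in both regimes $\lambda_n t^\al \le 1$ and $\lambda_n t^\al \ge 1$, whence $\|A_q^2 E_q(t)\| \le c t^{-\al-1}$ with $c$ independent of $q$. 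This gives $\|(\Dal u_\gamma(t;q))'\|_{\dot H^2\II} \le c t^{-\al-1}\gamma^{-1}$, and multiplying the two bounds by $t^\al$ and $t^{\al+1}$ respectively and adding yields the claimed estimate.
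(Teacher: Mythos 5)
Your proof is correct and follows essentially the same route as the paper: the representation \eqref{solu:ugam} combined with Lemma~\ref{lemma:gammainv} for the first bound, and the identity $\Dal u_\gamma(t;q)=-A_qF_q(t)\zeta$ with $\zeta=u_\gamma(0;q)-A_q^{-1}(f-qD)$ together with the smoothing of $F_q$ and $E_q$ for the second (the paper obtains the intermediate exponents $s\in(0,2)$ by interpolating between $s=0$ and $s=2$, whereas you commute $A_q^{s/2}$ through the resolvent, which is equivalent). One point where your write-up is actually more complete than the paper's: the bound $\|A_q^2E_q(t)\|\le ct^{-\al-1}$ needed for the temporal-derivative term is not covered by Lemma~\ref{lem:op}(i) (which stops at $\nu=1$), and your explicit spectral verification via $E_{\alpha,\alpha}(-x)\le c(1+x^2)^{-1}$ supplies exactly the missing step that the paper uses implicitly.
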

\begin{proof}
    Since the exact data $g_1(x)-D\in \dot H^{2}\II$, then from  \eqref{solu:ugam}
    and   Lemma~\ref{lemma:gammainv}, we can find that
    \begin{align*}
      \| u_\gamma(0;q) \|_{L^2\II}&\le c \|( \bar g_1-(I-F_{q}(T_1)) A_q^{-1}(f-qD)) \|_{\dot H^2\II}\le c,\\
          \| u_\gamma(0;q) \|_{\dot H^2\II}&\le c\gamma^{-1} \|( \bar g_1-(I-F_{q}(T_1)) A_q^{-1}(f-qD)) \|_{\dot H^2\II}\le c \gamma^{-1}.
    \end{align*}
     Then the first estimate results with $s\in(0,2)$ followed by the interpolation.

From the solution representation \eqref{eq:solutionrepre}, and the identities $\Dal F_q(t)=A_qF_q(t)$, $ F'_q(t)=-A_qE_q(t)$, we can derive that 
\begin{align*}
     \lnorm\Dal  u_\gamma(t;q)\rnorm_{\dot H^2\II}  = & \lnorm F_q(t)( A_q  u_\gamma(0;q)-A_q^{-1}(f- qD))\rnorm_{\dot H^2\II}\\ \le& c\lnorm A_q F_q(t)( A_q  u_\gamma(0;q)-A_q^{-1}(f- qD))\rnorm_{ L^2\II}\le c\gamma^{-1}t^{-\al},\\
      \lnorm(\Dal  u_\gamma(t;q))'\rnorm_{\dot H^2\II}  = & \lnorm F'_q(t)( A_q  u_\gamma(0;q)-A_q^{-1}(f- qD))\rnorm_{\dot H^2\II}\\ \le& c\lnorm A^2_q E_q(t)( A_q  u_\gamma(0;q)-A_q^{-1}(f- qD))\rnorm_{ L^2\II}\le c\gamma^{-1}t^{-\al-1}.
\end{align*}
This completes the proof of the lemma.
\end{proof}

 \section{Fully discretization and its error analysis}\label{sec:fully_scheme}
In this section, we propose and analyze a fully discrete scheme for reconstructing the initial data and potential. We begin by investigating a semidiscrete scheme based on the finite element method for spatial discretization. While the semidiscrete scheme may not be directly practical for implementation, it serves as a critical foundation for the analysis and development of the fully discrete scheme.


\subsection{Semidiscrete scheme to the problem} 
 
Let ${\{\mathcal{T}_h\}}_{0<h<1}$ denote a family of shape-regular and quasi-uniform triangulations of the domain $\Omega$ into $d$-simplices (finite elements), 
where $h$ represents the maximum element diameter. 
The corresponding finite element spaces $X_h$ and $X_h^0$ are defined as follows:

\begin{equation*}
\begin{aligned}
 X_h  = \{ v\in H^1(\Omega): v|_K \in P_1(K) \,\, \,\,\,\forall  K\in \T_h \},~~\text{and} ~~
 X_h^0  = X_h \cap H_0^1\II.
\end{aligned}
\end{equation*}
where $P_1(K)$ denotes the space of linear polynomials on $K$.
Define the orthogonal $L_2$-projection $P_h:L^2(\Omega)\to X_h^0$ and
the Ritz projection $R_h(q):H^1_0(\Omega)\to X_h^0$ by
  \begin{align*}
    (P_h \psi,\chi_h)&=(\psi,\chi_h),&& \quad\forall \chi_h\in X_h^0,\\
    (\nabla R_h(q) \psi,\nabla\chi_h)+(qR_h(q)\psi,\chi_h)&=(\nabla \psi,\nabla\chi_h) + (q\psi,\chi_h),&& \quad \forall \chi_h\in X_h^0.
  \end{align*}
It is well known that the operators $P_h$ and $R_h(q)$ (for $q \in \A$) satisfy the following approximation property, 
sf. \cite[Lemma~1.1]{Thomee:2006} and \cite[Theorems~3.16 and~3.18]{ern-guermond}, for $s \in [1,2]$:
\begin{equation}\label{ph-bound}
  \begin{aligned}
    \|P_h \psi - \psi\|_{L^2(\Omega)} 
    + \|R_h(q)\psi - \psi\|_{L^2(\Omega)} 
    \le c h^s \|\psi\|_{\dot{H}^s(\Omega)}, 
    \quad \forall \psi \in H^s(\Omega) \cap H_0^1(\Omega).
  \end{aligned}
\end{equation}
Since $q \in \A$, the constant $c$ is independent of $q$. 
Let $\mathcal{I}_h$ denote the Lagrange interpolation operator associated with the finite element space $X_h$. 
It satisfies the following error estimates for $s = 1,2$ and $1 \le p \le \infty$ with $sp > d$ 
(see \cite[Theorem~1.103]{ern-guermond}):
\begin{align}\label{eqn:int-err}
  \|v - \mathcal{I}_h v\|_{L^p(\Omega)}  
  + h\|v - \mathcal{I}_h v\|_{W^{1,p}(\Omega)}  
  \le c h^s \|v\|_{W^{s,p}(\Omega)}, 
  \quad \forall v \in W^{s,p}(\Omega).
\end{align}
Similarly, let $\mathcal{I}_h^{\partial}$ denote the Lagrange interpolation operator on the boundary.

Let $\gamma_0$ be the trace
operator \cite[Section B.3.5]{ern-guermond}, and the set
$ X_{h}^\partial = \left\{\gamma_0(\chi_h): \  \chi_h \in X_h \right\}. $
 We solve $D(x)$ numerically by
\begin{equation*}
  (\nabla D_h , \nabla \chi_h) = 0\quad \text{for all}~~ \chi_h \in X_h^0,\quad \text{and} \quad \gamma_0(D_h) =\Ih^\partial b.
\end{equation*}



The semi-discrete scheme for \eqref{eqn:pdeo} reads: find $ U_h(t;q)\in X_h$ such that $\gamma_0( U_h(t;q)) = \Ih^\partial b$ on $\partial\Omega$  and  for all  $\chi_h \in X_h^0 $ and $t>0$, 
\begin{equation*}\label{eqn:semi-i}
\begin{aligned}
         ( \Dal  U_h(t),\chi_h)  +  (\nabla  U_h(t), \nabla \chi_h)+(q   U_h(t), \chi_h)&= (f,\chi_h)\quad\text{with }\quad  U_h(0)=P_h( v-D_h)+D_h
\end{aligned}
\end{equation*}
For $q\in \A$, we define the discrete operator $A_h(q):\, X_h^0\to X_h^0$ such that
$$(A_h(q)\xi_h,\chi_h)  = (\nabla\xi_h,\nabla\chi_h) + (q \xi_h,\chi_h)  \quad  \text{for all} ~\  \xi_h,\chi_h \in X_h^0.$$
 In particular, we denote $A_h:=A_h(0)=-\Delta_h$.
By splitting the semi-discrete solution  \eqref{eqn:semi-i} as $u_h(t;q) =  U_h(t;q) - D_h$,
we observe that $  u_h(t;q)\in X_h^0$ satisfies

\begin{equation}\label{eqn:fy_hqt}
    \begin{aligned}
        \Dal u_h(t;q)  +  A_h(q) u_h(t;q)  = P_h [f - qD_h] \quad\text{with }\quad u_h(0;q)=P_h( v-D_h).
    \end{aligned}
\end{equation}
By means of Laplace transform, 
 the semi-discrete solution in~\eqref{eqn:fy_hqt} could be written as:
\begin{equation} \label{eqn:sol-rep-semi}
\begin{aligned}
  u_h(t;q)=  F_{q,h}(t)P_h( v-D_h)+ \int_0^t   E_{q,h}(s)P_h [f - qD_h] \ \ds,
\end{aligned}
\end{equation}
where $F_{q,h}(t)$ and  $E_{q,h}(t)$  are defined by
\begin{equation}\label{sol_dis_op}
\begin{aligned}
F_{q,h}(t) &= \frac{1}{2\pi i }\int_\contour e^{zt} z^{\alpha-1}(z^\alpha+A_h(q))^{-1}\ \dz,\quad
E_{q,h}(t) &= \frac{1}{2\pi i}\int_\contour  e^{zt} (z^\alpha+A_h(q))^{-1}\  \dz.
 \end{aligned}
\end{equation}
The discrete operators $F_{q,h}(t)$ and $E_{q,h}(t)$ satisfy the following smoothing property,
whose proof is identical to that of Lemma~\ref{lem:op}.
\begin{lemma}\label{lem:op-semi}
Let $F_{q,h}(t)$ and $E_{q,h}(t)$ be the solution operators defined in \eqref{sol_dis_op} for $q\in\A$.
Then  there holds
\begin{align*}
    t^{\nu\alpha}\|A_h(q)^{\nu}F_{q,h}(t)\|+t^{1-(1-\nu)\alpha}\|A_h(q)^{\nu}E_{q,h}(t)\|\le c_0, \ \text{and}\ \|(A_h(q)  F_{q,h}(t))^{-1}\| \le  c_0 (1+t^{\alpha}),
\end{align*}
here $0\le \nu\le 1$ and $c_0>0$ is the constant  the same as  Lemma~\ref{lem:op}.
\end{lemma}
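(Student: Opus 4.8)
The plan is to mirror the proof of Lemma~\ref{lem:op} essentially verbatim, the only genuinely new point being the verification that every constant is uniform in both the mesh size $h$ and the potential $q\in\A$. First I would record that $A_h(q)$ is self-adjoint and positive definite on $X_h^0$ with respect to the $L^2(\Omega)$ inner product, with eigenpairs $\{(\lambda_{n,h}(q),\varphi_{n,h}(q))\}$ satisfying $\lambda_{n,h}(q)\ge\lambda_{1,h}(q)\ge\lambda_1>0$, where $\lambda_1$ is the first Dirichlet eigenvalue of $-\Delta$. This lower bound follows from the Courant--Fischer min--max principle together with $X_h^0\subset H_0^1\II$ and $q\ge0$, and is therefore independent of $h$ and $q$; this uniform spectral gap is what replaces the continuous spectral data used in Lemma~\ref{lem:op}.

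Next I would establish the uniform sectorial resolvent estimate. Since $\theta\in(\tfrac\pi2,\tfrac\pi\al)$, for every $z\in\contour$ one has $|\arg z^\al|=\al|\arg z|\le\al\theta<\pi$, so $z^\al$ lies in a fixed sector bounded away from the negative real axis by the angle $\pi-\al\theta>0$. Because the spectrum of $A_h(q)$ lies on $(0,\infty)$, this yields $|z^\al+\lambda|\ge c(\al,\theta)\lb|z^\al|+\lambda\rb$ for all $\lambda>0$, whence the spectral calculus gives $\lnorm A_h(q)^\nu(z^\al+A_h(q))^{-1}\rnorm\le C|z|^{\al(\nu-1)}$ with $C$ independent of $h$, $q$ and $z$. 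This is the exact discrete analogue of the resolvent bound underlying Lemma~\ref{lem:op}.

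With these two estimates in hand, the first inequality is a pure contour computation identical to the continuous case: insert the resolvent bound into the representations \eqref{sol_dis_op}, deform the contour so that its circular part has radius $\sigma\sim t^{-1}$, and estimate the arc and the two rays separately. The factors $t^{\nu\al}$ and $t^{1-(1-\nu)\al}$ emerge from the scaling $|z|\sim t^{-1}$: the integrand for $A_h(q)^\nu F_{q,h}(t)$ decays like $|z|^{\al\nu-1}$ (from $z^{\al-1}$ times $|z|^{\al(\nu-1)}$) and that for $A_h(q)^\nu E_{q,h}(t)$ like $|z|^{\al(\nu-1)}$, and multiplying by the arc length $\sim t^{-1}$ produces $t^{-\nu\al}$ and $t^{(1-\nu)\al-1}$, respectively; integrability on the rays is guaranteed by $e^{zt}$ decaying like $e^{-|\cos\theta|\rho t}$ on the ray $z=\rho e^{\pm i\theta}$, since $\cos\theta<0$. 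For the inverse bound I would instead switch to the eigenfunction expansion, observing that $F_{q,h}(t)$ acts on $\varphi_{n,h}(q)$ by the scalar $E_{\al,1}(-\lambda_{n,h}(q)t^\al)$, so that $\lb A_h(q)F_{q,h}(t)\rb^{-1}$ acts by $\big(\lambda_{n,h}(q)E_{\al,1}(-\lambda_{n,h}(q)t^\al)\big)^{-1}$. The lower Mittag--Leffler bound $E_{\al,1}(-x)\ge(1+\Gamma(1-\al)x)^{-1}$ recalled in the proof of Lemma~\ref{lem:op} then bounds this scalar by $\lambda_{n,h}(q)^{-1}+\Gamma(1-\al)t^\al\le\lambda_1^{-1}+\Gamma(1-\al)t^\al\le c_0(1+t^\al)$, uniformly in $n$, $h$ and $q$.

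The only genuine obstacle is precisely the uniformity of the constants, and it is resolved entirely by the two observations above: the uniform spectral gap $\lambda_{1,h}(q)\ge\lambda_1$ and the uniform sectoriality of $A_h(q)$. Once these are fixed, every remaining step is term-by-term identical to Lemma~\ref{lem:op}, which is exactly why the computation may be omitted.
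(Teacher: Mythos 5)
Your proposal is correct and follows exactly the route the paper intends: the paper omits the proof, stating it is identical to that of Lemma~\ref{lem:op} (whose part (i) is itself deferred to the cited references on resolvent/Mittag--Leffler estimates), and your contour argument plus the eigenfunction expansion with the lower Mittag--Leffler bound is precisely that argument transplanted to $A_h(q)$. Your identification of the uniform spectral gap $\lambda_{1,h}(q)\ge\lambda_1$ and the uniform sectoriality as the only points needing verification for $h$- and $q$-independence of $c_0$ is exactly the right observation, and both are justified correctly.
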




 Let $  u_{\gamma,h}(t;q)\in X_h^0$ be the semidiscrete solution of the  regularization  equation  
  \eqref{eqn:back_non_reg:}   satisfying
  \begin{equation}\label{eqn:back_non_reg:semi}
          \begin{aligned}
                 \Dal u_{\gamma,h}(t;q)  +  A_h(q) u_{\gamma,h}(t)  &= P_h [f - qD_h] \quad\text{with}\quad\gamma u_{\gamma,h}(0;q)+ u_{\gamma,h}(T_1;q)=P_h(g_1-D):=P_h\bar g_1.
          \end{aligned}
  \end{equation}
 
 Using solution representation  and the condition $\gamma u_{\gamma,h}(0;q)+ u_{\gamma,h}(T_1;q)=P_h\bar g_1$ gives  
 \begin{align}
           u_{\gamma,h}(0;q)&=\left(\gamma I+F_{q,h}(T_1)\right)^{-1}(  P_h\bar g_1-(I-F_{q,h}(T_1)) A_h(q)^{-1}P_h [f - qD_h])\label{solu:ugam_semi}.
 \end{align}
The following lemma is a discrete counterpart to \cite[Lemma 15.3]{JinZhou:2023book}, and we omit the proof. 
\begin{lemma}\label{lem:op-reg-semi}
Let $F_{q,h}(t)$ be the discrete solution operator defined in \eqref{sol_dis_op}.
Then $ v_h\in X_h^0$, there holds that
\begin{equation*}
\quad \|F_{q,h}(t)(\gamma I+F_{q,h}(T))^{-1} v_h\|_{L^2\II}\le c\min(\gamma^{-1},t^{-\al})\| v_h\|_{L^2\II},
\end{equation*}
where the constant $c$  may depend on $T$, but  is independent of $\gamma$, $h$, $t$.
\end{lemma}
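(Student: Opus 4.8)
The plan is to establish the smoothing property for the discrete resolvent operator $F_{q,h}(t)(\gamma I+F_{q,h}(T))^{-1}$ by working in the discrete eigenbasis of $A_h(q)$. Let $\{(\lambda_n^h(q),\fy_n^h(q))\}$ denote the eigenpairs of the discrete operator $A_h(q)$ on $X_h^0$. Using the spectral representation, $F_{q,h}(t)$ acts on each eigenfunction $\fy_n^h(q)$ by multiplication with the scalar $E_{\alpha,1}(-\lambda_n^h(q)t^\alpha)$, exactly as in the continuous case treated in Lemma~\ref{lem:op}. Consequently, for $v_h=\sum_n (v_h,\fy_n^h(q))\fy_n^h(q)\in X_h^0$, the operator $F_{q,h}(t)(\gamma I+F_{q,h}(T))^{-1}$ acts diagonally with multipliers
\begin{equation*}
m_n(t)=\frac{E_{\alpha,1}(-\lambda_n^h(q)t^\alpha)}{\gamma+E_{\alpha,1}(-\lambda_n^h(q)T^\alpha)}.
\end{equation*}
It thus suffices to bound $\sup_n |m_n(t)|$ uniformly by $c\min(\gamma^{-1},t^{-\al})$, since then Parseval's identity immediately gives the desired $L^2$ bound.

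First I would derive the $\gamma^{-1}$ bound: since $0\le E_{\alpha,1}(-\lambda_n^h(q)t^\alpha)\le 1$ (from the scalar estimate $E_{\alpha,1}(-s)\le (1+\Gamma(1+\alpha)^{-1}s)^{-1}\le 1$ recalled in the proof of Lemma~\ref{lem:op}) and $\gamma+E_{\alpha,1}(-\lambda_n^h(q)T^\alpha)\ge\gamma$, we have $|m_n(t)|\le\gamma^{-1}$ trivially. For the $t^{-\al}$ bound, I would use the two-sided estimate $\frac{1}{1+\Gamma(1-\alpha)s}\le E_{\alpha,1}(-s)\le\frac{1}{1+\Gamma(1+\alpha)^{-1}s}$ to control the ratio $m_n(t)$. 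Writing $s_n=\lambda_n^h(q)t^\alpha$, the numerator is bounded above by $(1+\Gamma(1+\alpha)^{-1}s_n)^{-1}$ while the denominator exceeds $E_{\alpha,1}(-\lambda_n^h(q)T^\alpha)\ge(1+\Gamma(1-\alpha)\lambda_n^h(q)T^\alpha)^{-1}$, so that after cancellation the ratio is controlled by a constant multiple of $(1+\lambda_n^h(q)T^\alpha)/(1+\lambda_n^h(q)t^\alpha)$, which in turn is bounded by $c\,t^{-\al}$ uniformly in $n$ once we absorb the $T$-dependence into the constant (recalling $t\le T$ in the relevant regime). Combining these two bounds yields $\sup_n|m_n(t)|\le c\min(\gamma^{-1},t^{-\al})$.

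The argument is entirely parallel to the continuous case \cite[Lemma~15.3]{JinZhou:2023book} and relies only on the two-sided scalar Mittag--Leffler bounds together with the nonnegativity $\lambda_n^h(q)\ge0$ (guaranteed by $q\in\A$, i.e.\ $q\ge0$, so that $A_h(q)$ is positive semidefinite in the discrete inner product). I would emphasize that the constant $c$ is independent of $h$ because the scalar bounds hold uniformly for all eigenvalues $\lambda_n^h(q)\ge0$ regardless of the mesh, and independent of $q$ because the Mittag--Leffler estimates are uniform over the admissible range of $\lambda_n^h(q)$. The main obstacle, though minor, is ensuring the denominator stays bounded away from zero uniformly when balancing the two regimes $\gamma\gtrsim t^{-\al}$ and $\gamma\lesssim t^{-\al}$; this is handled cleanly by the elementary observation that $\min(\gamma^{-1},t^{-\al})$ dominates both individual bounds, so no delicate case analysis on the crossover is actually needed once both estimates are in hand.
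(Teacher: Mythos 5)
Your proof is correct and follows exactly the approach the paper intends: the paper omits the proof of this lemma, pointing instead to its continuous counterpart (\cite[Lemma~15.3]{JinZhou:2023book}), and the expected argument is precisely your spectral decomposition in the eigenbasis of $A_h(q)$ combined with the two-sided scalar Mittag--Leffler bounds, which is the same technique the paper itself uses in the proofs of Lemma~\ref{lem:op}(ii) and Lemma~\ref{lem:op-fully}. Your handling of the two regimes (numerator $\le 1$ over denominator $\ge \gamma$ for the $\gamma^{-1}$ bound, and the ratio $(1+\lambda T^\alpha)/(1+\lambda t^\alpha)\le c_T t^{-\alpha}$ for $t\le T$ for the other) is sound and yields constants independent of $h$, $q$, and $\gamma$ as required.
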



We now  demonstrate the error between $ u_{\gamma,h}(0;q)$ and $ u_\gamma(0;q)$.
\begin{lemma} \label{lem:esitugh}  
Let $ u_{\gamma}(t;q)$ and $ u _{\gamma,h}(t;q)$ be the solutions to the regularized  problem \eqref{eqn:back_non_reg:} 
and  \eqref{eqn:back_non_reg:semi}, respectively. 
 Then it holds that 
    \begin{equation*}
        \|  u_{\gamma,h}(0;q)-  u_{\gamma}(0;q)\|_{L^2\II}\le c\gamma^{-1}h^2,
    \end{equation*}
    where $c$ is a constant independent on $\gamma$, $h$.
\end{lemma}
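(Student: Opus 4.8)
The plan is to compare the closed-form representations \eqref{solu:ugam} and \eqref{solu:ugam_semi} directly. Writing $u_\gamma(0;q) = (\gamma I + F_q(T_1))^{-1} b$ with $b = \bar g_1 - (I - F_q(T_1))A_q^{-1}(f - qD)$ and $u_{\gamma,h}(0;q) = (\gamma I + F_{q,h}(T_1))^{-1} b_h$ with $b_h = P_h\bar g_1 - (I - F_{q,h}(T_1))A_h(q)^{-1}P_h(f - qD_h)$, I would insert the intermediate term $\tilde w_h := (\gamma I + F_{q,h}(T_1))^{-1}P_h b$ and decompose $u_{\gamma,h}(0;q) - u_\gamma(0;q)$ into the data error $(\gamma I + F_{q,h}(T_1))^{-1}(b_h - P_h b)$, the operator error $\tilde w_h - P_h u_\gamma(0;q)$, and the projection error $(P_h - I)u_\gamma(0;q)$. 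The recurring spectral fact is that $F_{q,h}(T_1)$ is self-adjoint on $X_h^0$ with spectrum in $(0,1]$, so $\|(\gamma I + F_{q,h}(T_1))^{-1}\| \le \gamma^{-1}$, the discrete $s=0$ analogue of Lemma~\ref{lemma:gammainv}; all finite element constants below are uniform in $q \in \A$ by \eqref{ph-bound} and Lemmas~\ref{lem:op}--\ref{lem:op-semi}.

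For the projection error, \eqref{ph-bound} together with Lemma~\ref{lem:regugamma} (case $s=2$) gives $\|(P_h - I)u_\gamma(0;q)\|_{L^2\II} \le ch^2\|u_\gamma(0;q)\|_{\dH 2} \le c\gamma^{-1}h^2$. For the data error, the $\bar g_1$ contributions cancel in $b_h - P_h b$, leaving only the difference of the continuous and discrete corrections of $A_q^{-1}(f - qD) \in \dH 2$. I would bound this using the standard elliptic estimate $\|(A_h(q)^{-1}P_h - A_q^{-1})\psi\|_{L^2\II} \le ch^2\|\psi\|_{L^2\II}$, the approximation $\|D - D_h\|_{L^2\II} \le ch^2$, and the fixed-time semidiscrete error $\|(P_h F_q(T_1) - F_{q,h}(T_1)P_h)\psi\|_{L^2\II} \le ch^2\|\psi\|_{\dH 2}$ applied to the bounded-norm datum $A_q^{-1}(f-qD)$; this yields $\|b_h - P_h b\|_{L^2\II} \le ch^2$ and hence a contribution of order $\gamma^{-1}h^2$.

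The operator error is the crux. Applying $P_h$ to $(\gamma I + F_q(T_1))u_\gamma(0;q) = b$ and subtracting gives the identity $(\gamma I + F_{q,h}(T_1))(\tilde w_h - P_h u_\gamma(0;q)) = (P_h F_q(T_1) - F_{q,h}(T_1)P_h)u_\gamma(0;q)$, so this term equals $(\gamma I + F_{q,h}(T_1))^{-1}(P_h F_q(T_1) - F_{q,h}(T_1)P_h)u_\gamma(0;q)$ and is bounded by $\gamma^{-1}\|(P_h F_q(T_1) - F_{q,h}(T_1)P_h)u_\gamma(0;q)\|_{L^2\II}$. Here lies the main obstacle: estimating the semidiscrete error by $ch^2\|u_\gamma(0;q)\|_{\dH 2}$ would, via Lemma~\ref{lem:regugamma}, introduce a second factor $\gamma^{-1}$ and spoil the estimate with an unwanted $\gamma^{-2}h^2$. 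To keep the single power $\gamma^{-1}$, I must use the smoothing of the solution operator at the fixed positive time $T_1$: the fixed-time semidiscrete error in the smoothing form $\|(P_h F_q(T_1) - F_{q,h}(T_1)P_h)v\|_{L^2\II} \le c(T_1)h^2\|v\|_{L^2\II}$ (uniform in $q \in \A$, consistent with $F_q(T_1):L^2(\Omega)\to\dH 2$ being bounded by Lemma~\ref{lem:op} with $\nu=1$), combined with $\|u_\gamma(0;q)\|_{L^2\II} \le c$ from Lemma~\ref{lem:regugamma} (case $s=0$), controls this term by $c\gamma^{-1}h^2$. Summing the three contributions and using the triangle inequality then gives the claimed bound.
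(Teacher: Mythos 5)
Your proposal is correct and reaches the claimed bound, but it takes a genuinely different route from the paper's proof. The paper splits $u_{\gamma,h}(0;q)-u_\gamma(0;q)$ into $\zeta_h(0)+\rho$ with $\zeta_h=u_{\gamma,h}-P_hu_\gamma$, derives the error equation $\partial_t^\alpha\zeta_h+A_h(q)\zeta_h=A_h(q)(R_h(q)-P_h)u_\gamma$ subject to $\gamma\zeta_h(0)+\zeta_h(T_1)=0$, and then bounds
$\zeta_h(0)=-(\gamma I+F_{q,h}(T_1))^{-1}\int_0^{T_1}E_{q,h}(T_1-s)A_h(q)(R_h(q)-P_h)u_\gamma(s;q)\,\mathrm{d}s$
by estimating the Duhamel integral directly: splitting at $T_1/2$, integrating by parts via $A_h(q)E_{q,h}(t)=-F_{q,h}'(t)$, and invoking the $\gamma$-independent interior regularity $\|u_\gamma(s;q)\|_{\dot H^2(\Omega)}\le c(1+s^{-\alpha})$ and $\|u_\gamma'(s;q)\|_{\dot H^2(\Omega)}\le c\,s^{-\alpha-1}$. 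You instead compare the two closed-form representations \eqref{solu:ugam} and \eqref{solu:ugam_semi} and reduce the crux to the fixed-time operator error $\|(P_hF_q(T_1)-F_{q,h}(T_1)P_h)v\|_{L^2(\Omega)}\le c(T_1)h^2\|v\|_{L^2(\Omega)}$ applied to the merely $L^2$-bounded datum $u_\gamma(0;q)$. The two routes are mathematically equivalent at heart: the paper's integration-by-parts computation is essentially an inline proof of that nonsmooth-data semidiscrete error estimate, which you outsource to the literature. Both arguments correctly locate the single factor $\gamma^{-1}$ in the resolvent $(\gamma I+F_{q,h}(T_1))^{-1}$ and both avoid paying $\|u_\gamma(0;q)\|_{\dot H^2(\Omega)}\le c\gamma^{-1}$ a second time by exploiting the smoothing at the fixed positive time $T_1$; your explicit treatment of the $D$ versus $D_h$ discrepancy in the data error is, if anything, slightly more careful than the paper's. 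The one caveat is that the fixed-time smoothing-form estimate with a constant uniform over $q\in\mathcal{A}$ is not stated anywhere in this paper and cannot be justified merely by its ``consistency'' with Lemma~\ref{lem:op}; it is a standard result (available in the monograph the paper already cites, proved via the resolvent bound $\|(z^\alpha+A_h(q))^{-1}P_h-(z^\alpha+A_q)^{-1}\|\le ch^2$ and contour integration), so you should cite it explicitly or supply that short argument, at which point your proof is complete.
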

\begin{proof} 
We shall split $ u _{\gamma,h}(0;q) - u _{\gamma}(0;q)$ as:
\begin{equation*}
      \begin{aligned}
   u _{\gamma,h}(0;q) - u _{\gamma}(0;q) &=  u _{\gamma,h}(0;q)-P_h u _{\gamma}(0;q)+P_h u _{\gamma}(0;q)- u _{\gamma}(0;q)=:\zeta_h(0)+\rho.
\end{aligned} 
\end{equation*}
The projection property~\eqref{ph-bound} and Lemma \ref{lem:regugamma} indicate that
\begin{align*}
    \|P_h u _{\gamma}(0;q)- u _{\gamma}(0;q)\|_{L^2\II}\le ch^2\| u _{\gamma}(0;q)\|_{\dot H^2\II}\le c\gamma^{-1}h^2.
\end{align*}
We now turn to derive the bound of $\zeta_h(0)$. From \eqref{eqn:back_non_reg:} 
and  \eqref{eqn:back_non_reg:semi}, using the identity $A_h(q)R_h(q) \psi = P_h A_q \psi$ for $\psi\in H^2\II\cap H_0^1\II$ leads to 
\begin{equation}\label{eqn:zeta_h}
   \begin{aligned}
   \Dal \zeta_h(t)+A_h(q)\zeta_h(t)=(A_h(q)(R_h(q)-P_h) u _{\gamma}(s;q)\quad \text{with}\quad\gamma\zeta_h(0)+\zeta_h(T_1)=0.
\end{aligned} 
\end{equation}
Using the solution representation and the  condition $\gamma\zeta_h(0)+\zeta_h(T_1)=0$  yields
\begin{align*}
    \zeta_h(0)=&-\left(\gamma I+F_{q,h}(T_1)\right)^{-1}\int_0^{T_1}E_{q,h}(T_1-s)A_h(q)(R_h(q)-P_h) u _{\gamma}(s;q)\ \ds.
\end{align*}
Applying  identity $A_h(q)E_{q,h}(t)=-F'_{q,h}(t)$, Lemma \ref{lem:Dalu} and Lemma~\ref{lem:regugamma} gives 
    \begin{align*}
  &\lnorm \int_0^{T_1}E_{q,h}(T_1-s)A_h(q)(R_h(q)-P_h) u _{\gamma}(s;q)\ \ds \rnorm_{L^2\II}\\
   = &  \bigg\| \int_0^{\frac{T_1}{2}}E_{q,h}(T_1-s)A_h(q)(R_h(q)-P_h) u _{\gamma}(s;q)\ \ds +\int^{T_1}_{\frac{T_1}{2}}F_{q,h}(T_1-s)(R_h(q)-P_h) u_{\gamma}'(s;q)\ \ds 
    \\\quad&+(R_h(q)-P_h) u_{\gamma}(T_1;q) +F_{q,h}(\frac{T_1}{2})(R_h(q)-P_h) u_{\gamma}(\frac{T_1}{2};q)  \bigg\|_{L^2\II}
  \\  \le &c h^2\left(\int_0^{\frac{T_1}{2}}(T_1-s)^{-1}\| u_{\gamma}(s;q)\|_{\dot H^2\II}\ds+\int^{T_1}_{\frac{T_1}{2}}\| u_{\gamma}'(s;q)\|_{\dot H^2\II}\ds +\| u_{\gamma}(T_1;q)\|_{\dot H^2\II}+\| u_{\gamma}(\frac{T_1}{2};q)\|_{\dot H^2\II}\right)
  \\ \le& ch^2\left(\int_0^{\frac{T_1}{2}}(T_1-s)^{-1}(1+s^{-\al})\ds +\int^{T_1}_{\frac{T_1}{2}}s^{-\al-1}\ds +(1+T_1^{-\al})+ (1+(\frac{T_1}{2})^{-\al})\right)\le ch^2.
\end{align*}
The desired results
can be done from Lemma \ref{lem:op-reg-semi}.
\end{proof}
We now  demonstrate the error between $ \partial_t^\alpha  u_{\gamma,h}(t;q)$ and $\partial_t^\alpha  u_{\gamma}(t;q)$.
\begin{lemma}\label{lem:uh-err} 
Let  $ u_{\gamma}(t;q)$ and  $ u_{\gamma,h}(t;q)$ be the solutions to the regularized  problem \eqref{eqn:back_non_reg:} 
and  \eqref{eqn:back_non_reg:semi}, respectively. Then there holds
$$  \|  \partial_t^\alpha  u_{\gamma,h}(t;q)-\partial_t^\alpha  u_{\gamma}(t;q) \|_{L^2\II} \le c\gamma^{-1}h^{2}t^{-\al}$$
with the constant $c$ is independent of  $\gamma,h$ and $t$, but may depend on $T_1$.
\end{lemma}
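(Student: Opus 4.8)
The plan is to compare the two fractional derivatives through the same projection-based splitting used in Lemma~\ref{lem:esitugh}. Writing $\zeta_h(t)=u_{\gamma,h}(t;q)-P_h u_\gamma(t;q)$ and $\rho(t)=(P_h-I)u_\gamma(t;q)$, I would decompose
$$\Dal u_{\gamma,h}(t;q)-\Dal u_\gamma(t;q)=\Dal\zeta_h(t)+\Dal\rho(t),$$
and treat the two pieces separately. For the projection error $\rho$, since $P_h$ is time-independent it commutes with $\Dal$, so $\Dal\rho(t)=(P_h-I)\Dal u_\gamma(t;q)$; the approximation property \eqref{ph-bound} together with the bound $\|\Dal u_\gamma(t;q)\|_{\dot H^2\II}\le c\gamma^{-1}t^{-\al}$ from Lemma~\ref{lem:regugamma} immediately gives $\|\Dal\rho(t)\|_{L^2\II}\le ch^2\|\Dal u_\gamma(t;q)\|_{\dot H^2\II}\le c\gamma^{-1}h^2t^{-\al}$, the desired rate.

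The real work is the bound on $\Dal\zeta_h$. Recall from \eqref{eqn:zeta_h} that $\zeta_h$ solves $\Dal\zeta_h+A_h(q)\zeta_h=G(t)$ with $G(t):=A_h(q)(R_h(q)-P_h)u_\gamma(t;q)$ and the regularized condition $\gamma\zeta_h(0)+\zeta_h(T_1)=0$. Using the representation $\zeta_h(t)=F_{q,h}(t)\zeta_h(0)+\int_0^t E_{q,h}(t-s)G(s)\d s$ and the identity $\Dal F_{q,h}(t)=-A_h(q)F_{q,h}(t)$, I would write
$$\Dal\zeta_h(t)=-A_h(q)F_{q,h}(t)\zeta_h(0)+\Big(G(t)-A_h(q)\!\int_0^t E_{q,h}(t-s)G(s)\d s\Big).$$
The first term is controlled by Lemma~\ref{lem:op-semi} ($\|A_h(q)F_{q,h}(t)\|\le ct^{-\al}$) and the bound $\|\zeta_h(0)\|_{L^2\II}\le c\gamma^{-1}h^2$ already established in the proof of Lemma~\ref{lem:esitugh}, giving $c\gamma^{-1}h^2t^{-\al}$.

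For the second (Duhamel) term I would split the integral at $s=t/2$, mirroring Lemma~\ref{lem:esitugh}. On $[0,t/2]$ the kernel stays smooth ($t-s\ge t/2$), so I keep $A_h(q)$ on the operator and use the smoothing estimate $\|A_h(q)^2E_{q,h}(\tau)\|\le c\tau^{-1-\al}$ (which follows from the spectral/Mittag--Leffler bound exactly as in Lemma~\ref{lem:op}), applied to the $O(h^2)$ quantity $(R_h(q)-P_h)u_\gamma(s;q)$; together with $\|(R_h(q)-P_h)u_\gamma(s;q)\|_{L^2\II}\le ch^2\|u_\gamma(s;q)\|_{\dot H^2\II}\le c\gamma^{-1}h^2$ and $\int_0^{t/2}(t-s)^{-1-\al}\d s\le ct^{-\al}$, this piece is $O(\gamma^{-1}h^2t^{-\al})$. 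On $[t/2,t]$ I integrate by parts via $A_h(q)E_{q,h}(t-s)=\partial_s F_{q,h}(t-s)$; the boundary term at $s=t$ cancels the free $G(t)$, leaving $F_{q,h}(t/2)G(t/2)$ and $\int_{t/2}^t F_{q,h}(t-s)G'(s)\d s$ with $G'(s)=A_h(q)(R_h(q)-P_h)u_\gamma'(s;q)$. Both are handled by commuting $A_h(q)$ onto the solution operator, using $\|A_h(q)F_{q,h}(\tau)\|\le c\tau^{-\al}$ together with the regularity estimates $\|u_\gamma(s;q)\|_{\dot H^2\II}\le c\gamma^{-1}$ and $\|u_\gamma'(s;q)\|_{\dot H^2\II}\le c\gamma^{-1}s^{-1}$ (derived from \eqref{eq:solutionrepre} and Lemma~\ref{lem:op}, the restriction $s\ge t/2$ keeping $u_\gamma'$ away from its $s=0$ singularity), each again yielding $O(\gamma^{-1}h^2t^{-\al})$. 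Collecting the pieces proves the claim.

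The main obstacle is precisely this Duhamel estimate: a naive bound of $G(t)-A_h(q)\int_0^tE_{q,h}(t-s)G(s)\d s$ fails, because $G(t)$ alone carries no factor $h^2$ (absorbing $A_h(q)$ through an inverse inequality would destroy the order) and because $\|u_\gamma'(s;q)\|_{\dot H^2\II}$ is not integrable near $s=0$. Both difficulties are resolved by the $t/2$-splitting — operator smoothing on $[0,t/2]$ and integration by parts on $[t/2,t]$ (so that the non-decaying term $G(t)$ cancels and only $u_\gamma'$ on $[t/2,t]$ survives) — which is the same device that makes the proof of Lemma~\ref{lem:esitugh} work, now applied at the level of $\Dal\zeta_h$ rather than $\zeta_h(0)$.
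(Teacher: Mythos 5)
Your proof is correct and follows essentially the same route as the paper: the same splitting into a projection error (handled by \eqref{ph-bound} and Lemma~\ref{lem:regugamma}) and a discrete defect $\Dal\zeta_h=\rho_1^h$, which is then treated by Duhamel's formula together with the $t/2$-splitting and integration-by-parts device already used in Lemma~\ref{lem:esitugh}. The only difference is organizational: the paper sets up a fresh subdiffusion problem for $\Dal\zeta_h$ with source $A_h(q)(R_h(q)-P_h)\Dal u_\gamma$, so that only the $\nu\le 1$ smoothing bounds of Lemma~\ref{lem:op-semi} and the regularity of $\Dal u_\gamma$ from Lemma~\ref{lem:regugamma} are needed, whereas you differentiate the representation of $\zeta_h$ directly and therefore invoke the (valid but not stated in Lemma~\ref{lem:op-semi}) $\nu=2$ estimate $\|A_h(q)^2E_{q,h}(t)\|\le ct^{-1-\alpha}$, which you correctly flag.
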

\begin{proof}
 We shall use the splitting 
 \begin{align*}
    \partial_t^\alpha  u_{\gamma,h}(t;q)-\partial_t^\alpha  u_{\gamma}(t;q)=  \partial_t^\alpha  u_{\gamma,h}(t;q)-P_h\partial_t^\alpha  u_{\gamma}(t;q)+P_h\partial_t^\alpha  u_{\gamma}(t;q)-\partial_t^\alpha u_{\gamma}(t;q):=\rho_1^h(t)+\rho_2^h(t).
 \end{align*}
 The projection property~\eqref{ph-bound} and lemma \ref{lem:regugamma}  give 
 \begin{align*}
     \|\rho_2^h(t)\|_{L^2\II}\le c h^2 \|\partial_t^\alpha  u_{\gamma}(t;q)\|_{\dot H^2\II}\le c\gamma^{-1} h^2t^{-\al}.
 \end{align*}
 Following \eqref{eqn:zeta_h}, we can find that $\rho_1^h(t)$ solves
 \begin{align*}
   \Dal \rho_1^h(t)+A_h(q)\rho_1^h(t)=(A_h(q)(R_h(q)-P_h)\Dal u _{\gamma}(s;q),
\end{align*}
 with $\rho_1^h(0)=A_h(q)(R_h(q)u_{\gamma}(0)-u_{\gamma,h}(0)) +P_h(qD-qD_h)$.
 
Employing the Laplace transform, the solution can be given by 
 \begin{align*}
     \rho_1^h(t)=&F_{q,h}(t) \rho_1^h(0)+\int_0^tE_{q,h}(t-s) A_h(q)(R_h(q)-P_h)\Dal u _{\gamma}(s;q)\ds.
 \end{align*}
 Applying the same argument as Lemma~\ref{lem:esitugh}, with the regularity of $\Dal u _{\gamma}(s;q)$ in Lemma~\ref{lem:regugamma} gives
 \begin{align*}
    & \lnorm\int_0^tE_{q,h}(t-s) A_h(q)(R_h(q)-P_h)\Dal u _{\gamma}(s;q)\ds\rnorm_{L^2\II}\\\le &c h^2\bigg(\int_0^{\frac{t}{2}}(t-s)^{-1}\|\Dal u_{\gamma}(s;q)\|_{\dot H^2\II}\ds+\int^{t}_{\frac{t}{2}}\|(\Dal u_{\gamma}(s;q))'\|_{\dot H^2\II}\ds\\&\quad +\|\Dal u_{\gamma}(t;q)\|_{\dot H^2\II}+\|\Dal u_{\gamma}(\frac{t}{2};q)\|_{\dot H^2\II}\bigg)\\
     \le &c\gamma^{-1}h^2\left(\int_0^{\frac{t}{2}}(t-s)^{-1} s^{-\al}\ds + \int^{t}_{\frac{t}{2}}s^{-\al-1}\ds+t^{-\al}+(\frac{t}{2})^{-\al}\right)\le c\gamma^{-1}h^2t^{-\al}.
 \end{align*}
 In addition, Lemma~\ref{lem:op-semi} gives
  \begin{align*}
     \lnorm F_{q,h}(t) \rho_1^h(0)\rnorm_{L^2\II}\le& \lnorm A_h(q)F_{q,h}(t) (R_h(q)u_{\gamma}(0)-u_{\gamma,h}(0))\rnorm_{L^2\II}+ \|F_{q,h}(t) P_h(qD-qD_h)\|_{L^2\II}\\\le &ct^{-\al}(\|R_h(q)u_{\gamma}(0)-u_{\gamma,h}(0)\|_{L^2\II}+\|qD-qD_h\|_{L^2\II})\le c\gamma^{-1}h^2t^{-\al},
 \end{align*}
 where we use  $\|R_h(q)u_{\gamma}(0)-u_{\gamma,h}(0)\|_{L^2\II}\le \|R_h(q)u_{\gamma}(0)-u_{\gamma}(0)\|_{L^2\II}+\|u_{\gamma}(0)-u_{\gamma,h}(0)\|_{L^2\II} \le c\gamma^{-1}h^2$ and  $\|qD-qD_h\|_{L^2\II})\le ch^2$. 
 This completes the proof of the lemma.
\end{proof}

\subsection{Error analysis of fully discrete reconstruction}
In this section, we derive an error bound for the fully discrete scheme applied to the backward problem. To discretize in time, the interval $[0, T]$ is divided into a uniform grid with $t_n = n\tau$ for $n = 0, \ldots, N$, where $t_{N_1} = T_1$, $t_{N_2} = T_2$, and $\tau = T/N$ represents the time step size.
The fractional derivative is then approximated using the backward Euler convolution quadrature (with $\varphi^j = \varphi(t_j)$), as in \cite{lubich1988convolution}:
\begin{align*}
\bar\partial_\tau^\alpha \varphi^n = \sum_{j=0}^n \omega_{n-j}^{(\alpha)} (\varphi^{j} - \varphi^0),
\quad \text{with} \quad \omega_j^{(\alpha)} = (-1)^j \frac{\Gamma(\alpha+1)}{\Gamma(\alpha-j+1)\Gamma(j+1)}.
\end{align*}

The fully discrete scheme for \eqref{eqn:pdeo} then reads: find 
$U_h^n(q) \in X_h$ with $u_h^n(q) = U_h^n(q) - D_h \in X_h^0$ such that, for all $n \ge 1$,

\begin{equation}\label{eqn:fy_hqn}
    \begin{aligned}
         \bDal  u_h^n(q)   +  A_h(q) u_h^n(q)  = P_h [f -qD_h] \quad\text{with}\quad u_h^0(q) = P_h( v-D_h).
    \end{aligned}
\end{equation}
Then analogue to \eqref{eqn:sol-rep-semi}, the fully discrete solution in
\eqref{eqn:fy_hqn} could be written as:
\begin{equation} \label{eqn:sol-rep-fully}
\begin{aligned}
 u_h^n(q) &=  F_{h,\tau}^n(q)P_h( v-D_h)+ \tau \sum_{j=1}^{n}E_{h,\tau}^{n-j}(q)P_h [f - qD_h] \\
&=  F_{h,\tau}^n(q)P_h( v-D_h)+ (I- F_{h,\tau}^n(q)) A_h(q)^{-1}P_h [f - qD_h],
\end{aligned}
\end{equation}
where
    \begin{align}\label{eq:op-fully}
        F_{h,\tau}^n(q)=\frac{1}{2\pi i}\int_{\Gamma_{\theta,\sigma}^\tau} e^{zt_{n-1}}\delta_{\tau}(e^{-z\tau})^{\alpha-1}G_h(z)\ \dz \quad \text{and}  \quad E_{h,\tau}^n(q)=\frac{1}{2\pi i}\int_{\Gamma_{\theta,\sigma}^\tau} e^{zt_n}
        G_h(z)\ \dz 
    \end{align}
with $G_h(z)=(\delta_\tau(e^{-z\tau})^{\alpha}+A_h(q))^{-1},\ \delta_\tau(\xi)=(1-\xi)/\tau$ and the contour
$\Gamma_{\theta,\sigma}^\tau :=\{ z\in \Gamma_{\theta,\sigma}:|\Im(z)|\le {\pi}/{\tau} \}$, oriented with an increasing imaginary part, where $\theta\in(\pi/2,\pi)$ is close to $\pi$. 
Here we use the identity $$A_{h}(q) E_{h,\tau}^n(q)=-\partial_\tau F^{n+1}_{h,\tau}(q)$$ to obtain the second equality in \eqref{eqn:sol-rep-fully}, with the difference quotient $\partial_\tau f^n=\frac{f^n-f^{n-1}}{\tau}$.


The discrete operators $F_{h,\tau}^n(q)$ and $E_{h,\tau}^n(q)$ satisfy the following smoothing properties. 
The proof of part~(i) can be found in \cite[Lemma~4.6]{zhang2022identification} and \cite[Corollary~15.3]{JinZhou:2023book}. 
The proofs of parts~(ii) and~(iii) will be presented below.

\begin{lemma}\label{lem:op-fully}
Let $F_{h,\tau}^n(q)$ and $E_{h,\tau}^n(q)$ be the solution operators defined in \eqref{eq:op-fully} and $F_{h,q}(t_n)$ defined in \eqref{sol_dis_op} for $q\in\A$.
Then  there exists a $  c_{0}>0$ independent of $q$, $h$, $\tau$ and $n$ such that  for all $n>1$, $0\le \nu \le 1$:
\begin{itemize}
\item[$\rm(i)$] $t_n^{\nu\alpha}\|A_h(q)^{\nu}F_{h,\tau}^n(q)\|+t_{n+1}^{1-(1-\nu)\alpha}\|A_h(q)^{\nu}E_{h,\tau}^n(q)\|\le  c_{0}$, $ \|(A_h(q)  F_{h,\tau}^{N_1}(q))^{-1}\| \le c_{0} (1+T_1^{\alpha}) $;
\item[$\rm(ii)$] $ \| F_{h,\tau}^j(q) F_{h,\tau}^{N_1}(q)^{-1}\|\le  c_{0}(1+t_{j}^{-\al}T_1^\al),\ \|\partial_\tau F_{h,\tau}^j(q)F_{h,\tau}^{N_1}(q)^{-1}\|\le c_{0}t_j^{-1}(1+t_j^{-\al}T_1 ^\al) $  ;
\item[$\rm(iii)$]    $ \| F_{h,\tau}^j(q) F_{h,q}(t_n)^{-1}\|\le  c_{0}(1+t_{j}^{-\al}t_n^\al),\quad \|\partial_\tau F_{h,\tau}^j(q)F_{h,q}(t_n)^{-1}\|\le c_{0}t_j^{-1}(1+t_j^{-\al}t_n ^\al) $.
\end{itemize}
\end{lemma}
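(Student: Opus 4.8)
The plan is to follow the spectral strategy already used for Lemma~\ref{lem:op}(ii). Since $A_h(q)$ is self-adjoint and positive definite on the finite-dimensional space $X_h^0$, it admits an orthonormal eigenbasis $\{(\lambda_{k,h}(q),\fy_{k,h}(q))\}$, and each of the operators $F_{h,\tau}^j(q)$, $E_{h,\tau}^j(q)$, $F_{h,\tau}^{N_1}(q)$ and the semidiscrete $F_{q,h}(t_n)$ is a function of $A_h(q)$ through its symbol; in particular they commute and are simultaneously diagonalized. Writing $F_{h,\tau}^j(\lambda)$ and $E_{h,\tau}^j(\lambda)$ for the associated scalar symbols with $\lambda=\lambda_{k,h}(q)>0$, all the operator-norm bounds in (ii)--(iii) reduce to the corresponding scalar inequalities, required to hold uniformly in $\lambda>0$, which is exactly what makes the constants independent of the eigenvalues of $A_h(q)$, hence of $h$.

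The first ingredient is a pair of two-sided discrete Mittag--Leffler bounds
\[
\frac{c_1}{1+\lambda t_n^\alpha}\le F_{h,\tau}^n(\lambda)\le \frac{c_2}{1+\lambda t_n^\alpha},\qquad n\ge 1 .
\]
The upper bound is immediate from part~(i): interpolating $|F_{h,\tau}^n(\lambda)|\le c$ (case $\nu=0$) and $\lambda|F_{h,\tau}^n(\lambda)|\le c t_n^{-\alpha}$ (case $\nu=1$) gives $F_{h,\tau}^n(\lambda)\le c\min(1,(\lambda t_n^\alpha)^{-1})\le c'(1+\lambda t_n^\alpha)^{-1}$. The matching lower bound, equivalently the eigenvalue-wise control $F_{h,\tau}^{N_1}(\lambda)^{-1}\le c(1+\lambda T_1^\alpha)$, is the genuinely discrete fact; I would extract it from the contour representation~\eqref{eq:op-fully}, using that $\delta_\tau(e^{-z\tau})$ stays in a fixed sector with $|\delta_\tau(e^{-z\tau})|\sim|z|$ on $\Gamma_{\theta,\sigma}^\tau$, or invoke the corresponding CQ estimate from \cite{JinZhou:2023book}. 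For part~(iii) the denominator is the semidiscrete symbol $F_{q,h}(t_n)(\lambda)=E_{\alpha,1}(-\lambda t_n^\alpha)$, whose two-sided bound is exactly the Mittag--Leffler estimate already recorded in the proof of Lemma~\ref{lem:op}. With these in hand, the first inequalities in (ii) and (iii) follow verbatim from the continuous computation: inserting the upper bound in the numerator and the lower bound in the denominator reduces the problem to $(1+\lambda T_1^\alpha)/(1+\lambda t_j^\alpha)$ (resp.\ with $t_n$), which is bounded by $1+t_j^{-\alpha}T_1^\alpha$ through the substitution $x=\lambda t_j^\alpha$ exactly as in Lemma~\ref{lem:op}(ii).

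For the derivative estimates I would use the identity $\partial_\tau F_{h,\tau}^{j}(q)=-A_h(q)E_{h,\tau}^{j-1}(q)$ (the stated $A_h(q)E_{h,\tau}^n(q)=-\partial_\tau F_{h,\tau}^{n+1}(q)$ with $n=j-1$), so that the scalar quantity to control is $\lambda E_{h,\tau}^{j-1}(\lambda)/F_{h,\tau}^{N_1}(\lambda)$. The second ingredient is then a quadratic-decay bound
\[
E_{h,\tau}^{m}(\lambda)\le \frac{c\,t_{m+1}^{\alpha-1}}{1+(\lambda t_{m+1}^\alpha)^2},\qquad m\ge 1,
\]
the discrete analogue of $E_{\alpha,\alpha}(-x)\le c/(1+x^2)$. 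Combined with the lower bound on $F$ and the elementary inequalities $g(x)\le c$ and $x g(x)\le c$ for $g(x)=x/(1+x^2)$, this closes the estimate exactly as in the continuous derivative calculation, yielding the factor $t_j^{-1}(1+t_j^{-\alpha}T_1^\alpha)$ (resp.\ with $t_n$).

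The main obstacle --- and the reason (ii)--(iii) do not follow from (i) alone --- lies in these two discrete-in-time facts, above all the quadratic decay of $E_{h,\tau}^m$. It is delicate because it genuinely fails at $m=0$: the exactness $F_{h,\tau}^0=I$ forces $\partial_\tau F_{h,\tau}^1=-A_h(q)E_{h,\tau}^0$ not to decay as $\lambda\to\infty$, so $\sup_\lambda \lambda E_{h,\tau}^0(\lambda)/F_{h,\tau}^{N_1}(\lambda)$ would grow like the largest eigenvalue of $A_h(q)$, hence like $h^{-2}$, and no $h$-independent constant could exist. This is precisely why the statement is confined to $n>1$, ensuring only $m=j-1\ge 1$ occurs, and I would flag this restriction explicitly. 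For $m\ge1$ the decay is recovered from \eqref{eq:op-fully} by splitting $\lambda^2/(\delta_\tau(e^{-z\tau})^\alpha+\lambda)=\lambda-\lambda\,\delta_\tau(e^{-z\tau})^\alpha/(\delta_\tau(e^{-z\tau})^\alpha+\lambda)$: the $z$-independent term contributes only at $m=0$, while the remaining term is estimated by the standard CQ resolvent bound $|\delta_\tau(e^{-z\tau})|^\alpha\le c|z|^\alpha$, giving the $t_{m+1}^{-1-\alpha}$ scaling after deforming onto $\Gamma_{\theta,\sigma}^\tau$.
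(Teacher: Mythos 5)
Your proposal is correct and follows essentially the same route as the paper: diagonalize in the eigenbasis of $A_h(q)$, reduce to scalar symbol estimates, use the two-sided CQ bound $c_1(1+\lambda t_n^\alpha)^{-1}\le F_\tau^n(\lambda)\le c_2(1+\lambda t_n^\alpha)^{-1}$ from \cite[Corollary~15.3]{JinZhou:2023book} together with the Mittag--Leffler bounds for the semidiscrete denominator in (iii), and handle the difference quotient by a contour estimate. The only cosmetic difference is that you bound $\partial_\tau F_{h,\tau}^j$ through the quadratic decay of $E_{h,\tau}^{j-1}$ via $\partial_\tau F_{h,\tau}^{j}=-A_h(q)E_{h,\tau}^{j-1}$, whereas the paper bounds the scalar symbol $\partial_\tau F_\tau^j(\lambda)=\frac{1}{2\pi\mathrm i}\int_{\Gamma_{\theta,\sigma}^\tau}e^{zt_{j-1}}G_\tau(z,\lambda)\,\d z$ directly by choosing $\sigma=t_j^{-1}$; the two computations are equivalent.
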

\begin{proof}
 For any $\bar v_h\in X_h^0$, we have  the following  equivalence formula of  $F_{h,\tau }^j(q)$   \cite[equation 15.21]{JinZhou:2023book},
\begin{align*}
F_{h,\tau}^j(q)\bar v_h=\sum_{n=1}^{N_h}F_{\tau}^j(\lambda_h^n(q))(\bar v_h, \fy_n^h(q)) \fy_n^h(q)\quad\text{with}\quad  F_{\tau}^j(\lambda)= \frac{1}{2\pi\mathrm{i}}\int_{\Gamma_{\theta,\sigma}^\tau } e^{zt_{j-1}}\delta_\tau(e^{-z\tau})^{-1}G_\tau(z,\lambda)\,\d z,
\end{align*}
where $G_\tau(z,\lambda)= \delta_\tau(e^{-z\tau})^{\al}({ \delta_\tau(e^{-z\tau})^\alpha}+\lambda)^{-1}$, and $(\lambda_h^n(q),\fy_n^h(q))_{n=1}^{N_h}$ denote the eigenpairs of $A_h(q)$, with $\{\fy_n^h(q)\}_{n=1}^{N_h}$ forms an orthonormal eigenfunctions in $L^2(\Omega)$.
Now from  \cite[Corollary 15.3]{JinZhou:2023book}, we can find that $  F^j_{\tau}(\lambda)\le c(1+\lambda t_j)^{-1},\ \forall n\ge 0$
  and $F^{N_1}_{\tau}(\lambda)\ge c(1+\lambda T_1)^{-1}$.
Therefore, it is straightforward to derive that
\begin{align}\label{iifull}
    \|F_{h,\tau}^j(q) F_{h,\tau}^{N_1}(q)^{-1} \bar v_h\|_{L^2\II}\le c\sum_{n=1}^{N_h}\left|\frac{1+\lambda_h^n(q) T_1^\al}{1+\lambda_h^n(q) t_j^\al}\right|^2 (\bar v_h, \fy_n^h(q))^2\le c(1+t_j^{-\al}T_1^\al)^2\|\bar v_h\|^2_{L^2\II}.
\end{align}
This completes the proof of the first desired estimate for (ii). 

Notice  that 
$
    \partial_\tau  F_{\tau}^{j}(\lambda)=\frac{1}{2\pi\mathrm{i}}\int_{\Gamma_{\theta,\sigma}^\tau } e^{zt_{j-1}} G_\tau(z,\lambda)\,\d z,
$
 and 
$| G_\tau(z,\lambda)|\le c\min (|z|^{-1+\al},\lambda^{-1}|z|^\al)$. 
Therefore, by setting $\sigma=t_{j}^{-1}$ in the contour $\Gamma_{\theta,\sigma}^\tau $, we can derive that (similar as  \cite[Lemma 15.6]{JinZhou:2023book})
\begin{align*}
   | \partial_\tau  F_{\tau}^{j}(\lambda)|\le   c\min(t_{j}^{-1},\lambda^{-1}t_{j}^{-1-\al})\le ct_{j}^{-1}(1+\lambda t_{j}^{\al})^{-1}.
\end{align*}
Consequently, the rest of the estimates can be proved with a similar argument as \eqref{iifull}.
\end{proof}
We now state \textsl{a priori} regularity results for the fully discrete solution $u_h^n(q)$ given in  \eqref{eqn:sol-rep-fully}.
\begin{lemma}\label{lem:Dalufully}
Let $v, f, D\in L^2\II$ and $q\in \A$, and $u_h^n(q)$ be the fully discrete solution  given in  \eqref{eqn:sol-rep-fully}. Then it holds that
\begin{align*}
t_n\|   \partial_\tau  u_h^n(q) \|_{L^2\II} +\|     u_h^n(q)\|_{L^2\II}\le \hat c_{v,f,D},
\end{align*}
where the constant $\hat c_{v,f,D}>0$ is  independent of $q$ and $t$,  but may depend on  $v$, $f$ and  $D$.
\end{lemma}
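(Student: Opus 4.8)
The plan is to mirror the continuous argument of Lemma~\ref{lem:Dalu} at the fully discrete level, relying entirely on the discrete smoothing estimates of Lemma~\ref{lem:op-fully}(i). The starting point is the closed-form representation \eqref{eqn:sol-rep-fully},
\[
u_h^n(q) = F_{h,\tau}^n(q)P_h(v-D_h) + (I-F_{h,\tau}^n(q))A_h(q)^{-1}P_h[f-qD_h],
\]
whose crucial feature is that the inhomogeneous contribution $A_h(q)^{-1}P_h[f-qD_h]$ is \emph{independent} of the time level $n$.

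For the $L^2$ bound I would take $\nu=0$ in Lemma~\ref{lem:op-fully}(i), giving $\|F_{h,\tau}^n(q)\|\le c_0$ and hence $\|I-F_{h,\tau}^n(q)\|\le 1+c_0$. Combining this with the $L^2$-stability of $P_h$, the uniform bound $\|A_h(q)^{-1}\|\le c$ (discussed below), and $\|q\|_{L^\infty\II}\le m$ yields $\|u_h^n(q)\|_{L^2\II}\le c(\|v\|_{L^2\II}+\|D_h\|_{L^2\II}+\|f\|_{L^2\II})=:\hat c_{v,f,D}$.

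For the weighted difference-quotient bound, the key algebraic observation is that, since the inhomogeneous term does not depend on $n$, the difference quotient only acts on $F_{h,\tau}^n(q)$:
\[
\partial_\tau u_h^n(q) = \big(\partial_\tau F_{h,\tau}^n(q)\big)\,w_h, \qquad w_h := P_h(v-D_h)-A_h(q)^{-1}P_h[f-qD_h].
\]
Using the identity $A_h(q)E_{h,\tau}^n(q)=-\partial_\tau F_{h,\tau}^{n+1}(q)$ stated after \eqref{eqn:sol-rep-fully}, I would rewrite $\partial_\tau F_{h,\tau}^n(q)=-A_h(q)E_{h,\tau}^{n-1}(q)$ and invoke the $\nu=1$ case of Lemma~\ref{lem:op-fully}(i), which gives $\|A_h(q)E_{h,\tau}^{n-1}(q)\|\le c_0 t_n^{-1}$. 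Hence $t_n\|\partial_\tau u_h^n(q)\|_{L^2\II}\le c_0\|w_h\|_{L^2\II}$, and the same data bounds used for the $L^2$ estimate show $\|w_h\|_{L^2\II}\le\hat c_{v,f,D}$, completing the estimate after adding the two contributions.

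The only point requiring care---and the main obstacle---is verifying that the auxiliary data quantities are bounded \emph{uniformly} in $q\in\A$, $h$, $\tau$, and $n$. The uniform bound $\|A_h(q)^{-1}\|\le c$ follows from $q\ge 0$: in the quadratic-form sense $A_h(q)\ge A_h(0)=-\Delta_h$, so the smallest eigenvalue satisfies $\lambda_h^1(q)\ge\lambda_h^1(0)\ge c>0$ by the discrete Poincar\'e inequality, giving $\|A_h(q)^{-1}\|\le 1/\lambda_h^1(q)\le c$. The remaining ingredient is the $h$-uniform bound $\|D_h\|_{L^2\II}\le c$, which is immediate from the $L^2$-stability of the discrete harmonic extension defining $D_h$. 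With these two facts in hand, all constants depend only on $v$, $f$, and $D$, as claimed.
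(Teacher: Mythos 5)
Your proposal is correct and follows essentially the same route as the paper: both use the representation \eqref{eqn:sol-rep-fully} with Lemma~\ref{lem:op-fully}(i) for the $L^2$ bound, and both exploit the $n$-independence of the inhomogeneous term together with the identity $A_h(q)E_{h,\tau}^{n}(q)=-\partial_\tau F_{h,\tau}^{n+1}(q)$ and the $\nu=1$ smoothing estimate to get $t_n\|\partial_\tau u_h^n(q)\|_{L^2\II}\le c$. The paper states these steps more tersely; your additional verification of the $q$- and $h$-uniform bounds on $\|A_h(q)^{-1}\|$ and $\|D_h\|_{L^2\II}$ is a correct filling-in of details the paper leaves implicit.
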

\begin{proof}
    Employing the fully discrete solution representation  \eqref{eqn:sol-rep-fully}, Lemma~\ref{lem:op-fully} gives $ \|   u_h^n(q)\|_{L^2\II}\le \hat c_{v,f,D}$.
     Using the identity $A_h(q)E_{h,\tau}^n(q)=-\partial_\tau F^{n+1}_{h,\tau}(q)$ yields 
    \begin{align*}
         \| \partial_\tau  u_h^n(q)\|_{L^2\II}=\left\|A_h(q)E_{h,\tau}^{n-1}(q)  \left(P_h( v-D_h) -A_h(q)^{-1}P_h [f - qD_h]\right)\right\|_{L^2\II}\le \hat c_{v,f,D}t_n^{-1}.
    \end{align*}
    This completes the proof of the
lemma.
    \end{proof}
    The next lemma gives a discrete version of Lemma~\ref{Lem:sobemb}.
\begin{lemma}\label{Lem:sobembdis}
    For any $u, v\in L^2\II$, $q\in \A$, there exists a constant $\hat c_{1}$,  independent of $q$ and $h$  such that 
    \begin{align*}
        \|A_h(q)^{-1}  P_hu\|_{L^\infty\II}\le \hat c_1\|u\|_{L^2\II},\quad \|A_h(q)^{-1} P_h(uv)\|_{L^2\II}\le \hat c_1\|u\|_{L^2\II}\|v\|_{L^2\II}.
    \end{align*}
\end{lemma}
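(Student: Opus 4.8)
The plan is to follow the proof of Lemma~\ref{Lem:sobemb} at the discrete level, reducing the second (product) estimate to the first (maximum-norm) estimate by a duality argument; the genuinely new content is the uniform $L^\infty$ bound for the discrete solution operator $A_h(q)^{-1}P_h$.

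For the first estimate, I would set $w=A_q^{-1}u\in H^2\II\cap H_0^1\II$ and $w_h=A_h(q)^{-1}P_hu\in X_h^0$, and first establish the identity $A_h(q)^{-1}P_h=R_h(q)A_q^{-1}$, i.e. $w_h=R_h(q)w$. Indeed, $w\in\mathrm{Dom}(A_q)$ gives $(\nabla w,\nabla\chi_h)+(qw,\chi_h)=(A_qw,\chi_h)=(u,\chi_h)$ for all $\chi_h\in X_h^0$, so by the definition of the Ritz projection $R_h(q)w$ satisfies $(\nabla R_h(q)w,\nabla\chi_h)+(qR_h(q)w,\chi_h)=(u,\chi_h)=(P_hu,\chi_h)$, which is precisely the variational problem defining $w_h$; uniqueness of the discrete solution then forces $w_h=R_h(q)w$. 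Hence $\|A_h(q)^{-1}P_hu\|_{L^\infty\II}=\|R_h(q)w\|_{L^\infty\II}$, and it suffices to bound the maximum norm of the Ritz projection of $w$.

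To bound $\|R_h(q)w\|_{L^\infty\II}$ I insert the Lagrange interpolant and split $\|R_h(q)w\|_{L^\infty\II}\le\|R_h(q)w-\mathcal{I}_hw\|_{L^\infty\II}+\|\mathcal{I}_hw\|_{L^\infty\II}$. Since $R_h(q)w-\mathcal{I}_hw\in X_h^0$, the inverse inequality on the quasi-uniform mesh gives $\|R_h(q)w-\mathcal{I}_hw\|_{L^\infty\II}\le ch^{-d/2}\|R_h(q)w-\mathcal{I}_hw\|_{L^2\II}$, and the triangle inequality together with the Ritz error bound \eqref{ph-bound} and the $L^2$ interpolation estimate \eqref{eqn:int-err} (both with $s=2$) yields $\|R_h(q)w-\mathcal{I}_hw\|_{L^2\II}\le ch^2\|w\|_{\dH2}$. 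Because $\|w\|_{\dH2}=\|A_qw\|_{L^2\II}=\|u\|_{L^2\II}$ and $d\le 3$ (so that $2-d/2\ge 1/2>0$), the first term is bounded by $ch^{2-d/2}\|u\|_{L^2\II}\le c\|u\|_{L^2\II}$. For the second term, nodal Lagrange interpolation is trivially max-norm stable, $\|\mathcal{I}_hw\|_{L^\infty\II}\le\|w\|_{L^\infty\II}$, while the Sobolev embedding $H^2\II\hookrightarrow L^\infty\II$ (valid for $d\le 3$) combined with full elliptic regularity gives $\|w\|_{L^\infty\II}\le c\|w\|_{H^2\II}\le c\|w\|_{\dH2}=c\|u\|_{L^2\II}$. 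Combining the two bounds proves the first estimate, with a constant independent of $h$ and of $q\in\A$.

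The second estimate then follows by the discrete analogue of the duality argument in Lemma~\ref{Lem:sobemb}. Writing $w_h:=A_h(q)^{-1}P_h(uv)\in X_h^0$, for any $\varphi\in L^2\II$ I use $w_h\in X_h^0$, the self-adjointness of $A_h(q)$ on $X_h^0$, and $A_h(q)^{-1}P_h\varphi\in X_h^0$ to write $(w_h,\varphi)=(w_h,P_h\varphi)=(P_h(uv),A_h(q)^{-1}P_h\varphi)=(uv,A_h(q)^{-1}P_h\varphi)$; Hölder's inequality and the first estimate applied to $\varphi$ give $(w_h,\varphi)\le\|uv\|_{L^1\II}\|A_h(q)^{-1}P_h\varphi\|_{L^\infty\II}\le\hat c_1\|u\|_{L^2\II}\|v\|_{L^2\II}\|\varphi\|_{L^2\II}$, and taking the supremum over $\varphi$ finishes the proof. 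The main obstacle is thus the first estimate: a bare inverse inequality costs a factor $h^{-d/2}$, so it must be paired with the $O(h^2)$ Ritz/interpolation error, the balance $h^{2-d/2}$ being harmless precisely because $d\le 3$. I would also verify throughout that the constants from elliptic regularity, from \eqref{ph-bound}, and from the inverse/interpolation inequalities are independent of $q\in\A$, which holds since $\A$ consists of uniformly bounded nonnegative potentials.
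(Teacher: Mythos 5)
Your proof is correct. The paper states this lemma without any proof (it is introduced only as ``a discrete version of Lemma~\ref{Lem:sobemb}''), so there is no argument to compare against; your duality step for the second estimate is the exact discrete analogue of the paper's proof of the continuous Lemma~\ref{Lem:sobemb}, and the genuinely new ingredient --- the uniform $L^\infty$ bound obtained from the identity $A_h(q)^{-1}P_h = R_h(q)A_q^{-1}$ together with the interpolant splitting, the inverse inequality, and the Ritz/interpolation error (the balance $h^{2-d/2}$ being harmless for $d\le 3$) --- is a standard and valid route, with all constants uniform in $q\in\A$ as you verify.
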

 We now let 
 $  u_{\gamma,h}^{\delta,n}(q) \in X_h^0$ be the fully discrete solution of the  regularization  equation  \eqref{eq:back_non_reg:noisy}   satisfying
 \begin{equation}\label{eq:back_non_reg:noisy:fully}
         \begin{aligned}
              \bDal u^{\delta,n} _{\gamma,h}(q)  +  A_h(q) u^{\delta,n} _{\gamma,h}(q)  = P_h [f - qD_h] \quad \text{with}\quad \gamma u_{\gamma,h}^{\delta,0}(q)+ u_{\gamma,h}^{\delta,N_1}(q)=P_h( g_1^\delta-D_h).
         \end{aligned}
 \end{equation}
 Using solution representation yields  
  \begin{align}\label{eqn:udisdeelayh}
       u_{\gamma,h}^{\delta,n}(q)=  F_{h,\tau}^n(q)  u^{\delta,0} _{\gamma,h}(q) + (I- F_{h,\tau}^n(q)) A_h(q)^{-1}P_h [f - qD_h].
 \end{align}
 The condition  $\gamma u_{\gamma,h}^{\delta,0}(q)+ u_{\gamma,h}^{\delta,N_1}(q)=P_h( g_1^\delta-D_h)$  implies
 \begin{align}
      u _{\gamma,h}^{\delta,0}(q)&=(\gamma I+F_{h,\tau}^{N_1}(q))^{-1}\left( P_h( g_1^\delta-D_h)-(I-F_{h,\tau}^{N_1}(q)) A_h(q)^{-1}P_h [f - qD_h]\right).\label{solu:ugamdel_fully}
 \end{align}
 We  now intend to derive an \textsl{a priori} estimate for  $  u_{\gamma,h}(0;q) -    u_{\gamma,h}^{\delta,0}(q)$.
\begin{lemma}\label{lem:uhn-err}  
Let $ u_{\gamma,h}(0;q)$ and $ u_{\gamma,h}^{\delta,0}(q)$ satisfy \eqref{solu:ugam_semi}
and \eqref{solu:ugamdel_fully}, respectively. Then there holds
$$  \|   u_{\gamma,h}(0;q)- u_{\gamma,h}^{\delta,0}(q) \|_{L^2\II} \le c(\tau +\gamma^{-1}h^2+\gamma^{-1}\delta) $$
with the constant independent of $q,\ \gamma,\ \delta,\  \tau$ and $h$, but may depend on $T_1$,  $g_1$, $f$ and  $D$.
\end{lemma}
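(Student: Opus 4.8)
The plan is to insert an intermediate fully discrete solution carrying the \emph{exact} terminal data, so as to separate the time-discretization error from the perturbation of the data. Let $w\in X_h^0$ be the fully discrete regularized solution at $n=0$ defined exactly as $u_{\gamma,h}^{\delta,0}(q)$ in \eqref{eq:back_non_reg:noisy:fully}, but with the clean data $P_h\bar g_1=P_h(g_1-D)$ replacing $P_h(g_1^\delta-D_h)$; by \eqref{solu:ugamdel_fully} it equals $(\gamma I+F_{h,\tau}^{N_1}(q))^{-1}(P_h\bar g_1-(I-F_{h,\tau}^{N_1}(q))A_h(q)^{-1}P_h[f-qD_h])$. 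First I would treat the data difference $w-u_{\gamma,h}^{\delta,0}(q)$: since both share the operator $F_{h,\tau}^{N_1}(q)$, one gets $w-u_{\gamma,h}^{\delta,0}(q)=(\gamma I+F_{h,\tau}^{N_1}(q))^{-1}P_h[(g_1-g_1^\delta)+(D_h-D)]$, and because $F_{h,\tau}^{N_1}(q)$ is self-adjoint and nonnegative we have $\|(\gamma I+F_{h,\tau}^{N_1}(q))^{-1}\|\le\gamma^{-1}$; with $\|g_1-g_1^\delta\|_{L^2\II}\le\delta$ and the elliptic finite element estimate $\|D-D_h\|_{L^2\II}\le ch^2$ this gives $\|w-u_{\gamma,h}^{\delta,0}(q)\|_{L^2\II}\le c\gamma^{-1}(\delta+h^2)$.

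The heart of the argument is the time-discretization difference $u_{\gamma,h}(0;q)-w$, which uses identical data but the operators $F:=F_{q,h}(T_1)$ and $\tilde F:=F_{h,\tau}^{N_1}(q)$. Writing $b=P_h\bar g_1$, $c=A_h(q)^{-1}P_h[f-qD_h]$ and using $I-F=(1+\gamma)I-(\gamma I+F)$ (and likewise for $\tilde F$), the representation \eqref{solu:ugam_semi} rearranges as $(\gamma I+F)^{-1}(b-(I-F)c)=(\gamma I+F)^{-1}d+c$ with $d:=b-(1+\gamma)c$; the term $c$ is identical for $F$ and $\tilde F$ and cancels, leaving
\[
u_{\gamma,h}(0;q)-w=\big[(\gamma I+F)^{-1}-(\gamma I+\tilde F)^{-1}\big]d=(\gamma I+F)^{-1}(\tilde F-F)(\gamma I+\tilde F)^{-1}d.
\]
Here $d$ has full discrete $\dot H^2$-regularity uniformly in $\gamma,h$: since $\bar g_1=g_1-D\in\dot H^2\II$ (the exact-data regularity, as in the proof of Lemma~\ref{lem:esitugh}), the identity $A_h(q)R_h(q)=P_hA_q$, the estimate \eqref{ph-bound}, and an inverse inequality give $\|A_h(q)P_h\bar g_1\|_{L^2\II}\le c\|\bar g_1\|_{\dot H^2\II}$, whence $\|A_h(q)d\|_{L^2\II}\le\|A_h(q)P_h\bar g_1\|_{L^2\II}+(1+\gamma)\|P_h[f-qD_h]\|_{L^2\II}\le c$.

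To conclude I would bound the operator $(\gamma I+F)^{-1}(\tilde F-F)(\gamma I+\tilde F)^{-1}A_h(q)^{-1}$ spectrally. Since $F$ and $\tilde F$ are both functions of $A_h(q)$ they share its eigensystem $\{(\lambda_n^h,\fy_n^h)\}$, so it suffices to bound the scalar multiplier $\mu(\lambda)=|\tilde F(\lambda)-F(\lambda)|/[(\gamma+F(\lambda))(\gamma+\tilde F(\lambda))\lambda]$ uniformly in $\lambda=\lambda_n^h>0$ and $\gamma\ge0$, where $F(\lambda)=E_{\alpha,1}(-\lambda T_1^\alpha)$ and $\tilde F(\lambda)=F_\tau^{N_1}(\lambda)$ are both comparable to $(1+\lambda T_1^\alpha)^{-1}$. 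Feeding in the \emph{refined} first-order convolution-quadrature estimate $|\tilde F(\lambda)-F(\lambda)|\le c\tau\,\lambda T_1^{\alpha-1}(1+(\lambda T_1^\alpha)^2)^{-1}$ (from CQ-BE accuracy, cf.\ \cite{JinZhou:2023book,lubich1988convolution}) and discarding $\gamma\ge0$ from the denominator through $(\gamma+F(\lambda))(\gamma+\tilde F(\lambda))\ge F(\lambda)\tilde F(\lambda)\ge c(1+\lambda T_1^\alpha)^{-2}$, all powers of $(1+\lambda T_1^\alpha)$ and $\lambda$ cancel and leave $\mu(\lambda)\le c\tau$, uniformly in $\gamma,h,\tau$. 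Hence $\|u_{\gamma,h}(0;q)-w\|_{L^2\II}\le c\tau\|A_h(q)d\|_{L^2\II}\le c\tau$, and adding the data bound yields the claim.

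I expect the uniform-in-$\gamma$ control of the time error to be the main obstacle. A crude operator-norm bound on $(\gamma I+F)^{-1}(\tilde F-F)(\gamma I+\tilde F)^{-1}$ loses $\gamma^{-2}$, and even a one-sided refinement of the quadrature error (decaying only as $\lambda\to0$) still leaves a stray $\gamma^{-1}$, because the worst modes sit near $\lambda T_1^\alpha\sim\gamma^{-1}$. The estimate survives only through the combination of the exact cancellation of the zeroth-order term $c$ (reducing everything to $d\in\dot H^2\II$ acted on by a single operator) and the \emph{two-sided} decay $\sim\lambda T_1^\alpha(1+(\lambda T_1^\alpha)^2)^{-1}$ of the quadrature error, which vanishes both as $\lambda\to0$ and as $\lambda\to\infty$ and thereby forces the spectral factors to balance. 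Establishing this refined mode-wise quadrature bound in exactly the required form---most safely from the contour representations of $F$ and $\tilde F$ via $|\delta_\tau(e^{-z\tau})-z|\le c\tau|z|^2$---is the technical crux.
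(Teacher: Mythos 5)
Your proposal is correct, and it reaches the stated bound by a genuinely different organization than the paper. The paper splits $u_{\gamma,h}(0;q)-u_{\gamma,h}^{\delta,0}(q)$ into five terms: the data-noise term (your first piece), a Ritz-projection correction $(P_h-R_h(q))\bar g_1$ handled with the crude $\gamma^{-1}$ resolvent bound (giving $\gamma^{-1}h^2$), and three resolvent/operator-difference terms that are dispatched by citing \cite[Lemmas 15.7--15.8]{JinZhou:2023book} and the direct CQ estimate \cite[Lemma 4.2]{zhang2022identification}. You instead (i) cancel the source contribution $A_h(q)^{-1}P_h[f-qD_h]$ exactly through the algebraic identity $(\gamma I+F)^{-1}(b-(I-F)c)=(\gamma I+F)^{-1}(b-(1+\gamma)c)+c$, reducing the whole time-discretization error to a single operator $(\gamma I+F)^{-1}(\tilde F-F)(\gamma I+\tilde F)^{-1}$ acting on one vector $d$ with $\|A_h(q)d\|_{L^2\II}\le c$; (ii) replace the Ritz-projection split by the inverse-inequality bound $\|A_h(q)P_h\bar g_1\|_{L^2\II}\le c\|\bar g_1\|_{\dot H^2\II}$; and (iii) prove the resolvent-difference estimate from scratch via the spectral multiplier $\mu(\lambda)$ rather than citing it. All three steps check out: the identity is correct, the lower bounds $E_{\alpha,1}(-\lambda T_1^\alpha)\ge c(1+\lambda T_1^\alpha)^{-1}$ and $F_\tau^{N_1}(\lambda)\ge c(1+\lambda T_1^\alpha)^{-1}$ are exactly those the paper already invokes in the proof of Lemma \ref{lem:op-fully}, and your diagnosis is accurate that the argument lives or dies on the two-sided quadrature bound $|\tilde F(\lambda)-F(\lambda)|\le c\tau\,\lambda T_1^{\alpha-1}(1+(\lambda T_1^\alpha)^2)^{-1}$ --- a one-sided bound leaves $\mu(\lambda)\sim\tau(1+\lambda T_1^\alpha)$ unbounded. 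That refined bound is true (it is the scalar content of the very lemmas of \cite{JinZhou:2023book} the paper cites, obtainable from the three regime estimates $c\tau\lambda T_1^{\alpha-1}$, $c\tau T_1^{-1}$, $c\tau\lambda^{-1}T_1^{-\alpha-1}$ via the contour argument you sketch), so your route is self-contained where the paper's is citation-based, at the cost of having to establish that estimate explicitly. What your approach buys is a cleaner structural explanation of why no $\gamma^{-1}\tau$ loss occurs; what the paper's buys is brevity, since the heavy lifting is outsourced.
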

\begin{proof} Using the representation \eqref{solu:ugam_semi} and \eqref{solu:ugamdel_fully} leads to 
    \begin{align*}
     u_{\gamma,h}(0;q)- u_{\gamma,h}^{\delta,0}(q)
    =&\left(\left(\gamma I+F_{q,h}(T_1)\right)^{-1}(P_h-R_h(q))\bar g_1-(\gamma I+F_{h,\tau}^{N_1}(q))^{-1}(P_h-R_h(q))\bar g_1\right)\\
    &+\left(\left(\gamma I+F_{q,h}(T_1)\right)^{-1}-(\gamma I+F_{h,\tau}^{N_1}(q))^{-1}\right)R_h(q)\bar g_1
    \\&+\left(A_h(q)\left(\gamma I+F_{q,h}(T_1)\right)\right)^{-1} \left(F_{q,h}(T_1)-F_{h,\tau}^{N_1}(q)\right)P_h [f - qD_h]\\
    &+\left(\left(\gamma I+F_{q,h}(T_1)\right)^{-1}-(\gamma I+F_{h,\tau}^{N_1}(q))^{-1}\right)\left(I-F_{h,\tau}^{N_1}(q)\right) A_h(q)^{-1}P_h [f - qD_h]\\
    &+(\gamma I+F_{h,\tau}^{N_1}(q))^{-1}(P_h(g_1-D)-P_h( g_1^\delta-D_h)) =: \sum_{i=1}^5 {\rm I}_i.
\end{align*}
Similar as the arguments in \cite[Lemmas 15.7--15.8]{JinZhou:2023book}, there holds
\begin{align*}
  & \| {\rm I_{1}}\|_{L^2\II}\le  c_{T_1}\gamma^{-1}h^2\|\bar g_1\|_{\dot H^2\II}   ,\quad \| {\rm I_{5}}\|_{L^2\II}
    \le c (\gamma^{-1}\delta+\gamma^{-1}h^2),\\
    &\| {\rm I_{2}}\|_{L^2\II}+\| {\rm I_{4}}\|_{L^2\II}\le  c_{T_1}\tau(\|\bar g_1\|_{\dot H^2\II}+\|f\|_{L^2\II}+\|D\|_{L^2\II}).
\end{align*}
For the term $ {\rm I_{3}}$, using Lemma~\ref{lem:op-fully} and the direct operator estimates \cite[Lemma 4.2]{zhang2022identification} yields
\begin{align*}
     \| {\rm I_{3}}\|_{L^2\II}\le c \left\|F_{q,h}(T_1)-F_{h,\tau}^{N_1}(q)\right\|\left\|P_h [f - qD_h]\right\|_{L^2\II}\le  c_{T_1,f,D}\tau. 
\end{align*}
This completes the proof of the lemma.
\end{proof}

We now provide  an \textsl{a priori} estimate for  $ \Dal  u_{\gamma,h}(t_n;q) -   \bDal  u_{\gamma,h}^{\delta,n}(q)$.
\begin{lemma}\label{lem:daluhn-err}
Let $ u _{\gamma,h}(t;q)$ and  $ u^{\delta,n} _{\gamma,h}(q)$ satisfy \eqref{eqn:back_non_reg:semi}
and \eqref{eq:back_non_reg:noisy:fully}, respectively. Then there holds
$$  \lnorm\Dal  u_{\gamma,h}(t_n;q)-\bDal  u_{\gamma,h}^{\delta,n}(q) \rnorm_{L^2\II} \le c(\tau +\gamma^{-1}h^2)\max (t_n^{-\al},t_n^{-\al-1}) $$
with the constant independent of $q,\gamma, h, \tau$ and $t_n$ but may depend on $T_1$,  $g_1$, $f$ and  $D$.
\end{lemma}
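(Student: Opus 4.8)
The plan is to remove the fractional derivatives through the governing equations and then compare the two solution operators directly. Subtracting the common source, equations \eqref{eqn:back_non_reg:semi} and \eqref{eq:back_non_reg:noisy:fully} give $\Dal u_{\gamma,h}(t_n;q)=-A_h(q)u_{\gamma,h}(t_n;q)+P_h[f-qD_h]$ and $\bDal u^{\delta,n}_{\gamma,h}(q)=-A_h(q)u^{\delta,n}_{\gamma,h}(q)+P_h[f-qD_h]$. Inserting the solution representations (cf.\ \eqref{eqn:sol-rep-semi} and \eqref{eqn:udisdeelayh}) and writing $w:=A_h(q)^{-1}P_h[f-qD_h]$, both right-hand sides collapse to the form $-A_h(q)F(t_n)(\text{initial datum}-w)$; that is, $\Dal u_{\gamma,h}(t_n;q)=-A_h(q)F_{q,h}(t_n)\,a$ with $a:=u_{\gamma,h}(0;q)-w$, and $\bDal u^{\delta,n}_{\gamma,h}(q)=-A_h(q)F_{h,\tau}^n(q)\,b$ with $b:=u^{\delta,0}_{\gamma,h}(q)-w$.

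Next I would add and subtract $A_h(q)F_{h,\tau}^n(q)a$ to split the difference into an operator-consistency term and an initial-value term:
\begin{equation*}
\Dal u_{\gamma,h}(t_n;q)-\bDal u^{\delta,n}_{\gamma,h}(q)=A_h(q)\bigl(F_{h,\tau}^n(q)-F_{q,h}(t_n)\bigr)a+A_h(q)F_{h,\tau}^n(q)\bigl(u^{\delta,0}_{\gamma,h}(q)-u_{\gamma,h}(0;q)\bigr),
\end{equation*}
where the constant datum $w$ cancels in the second factor. For the second term I would use the smoothing bound $\|A_h(q)F_{h,\tau}^n(q)\|\le c\,t_n^{-\al}$ from Lemma~\ref{lem:op-fully}(i) together with Lemma~\ref{lem:uhn-err}, which bounds the difference $\|u_{\gamma,h}(0;q)-u^{\delta,0}_{\gamma,h}(q)\|_{L^2\II}$ and supplies the spatial ($\gamma^{-1}h^2$) and temporal ($\tau$) contributions; this yields a factor $c\,t_n^{-\al}(\tau+\gamma^{-1}h^2)$.

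For the first term I would note that $a$ is bounded in $L^2\II$ uniformly in $\gamma$, since $\|u_{\gamma,h}(0;q)\|_{L^2\II}\le c$ (by Lemma~\ref{lem:regugamma} and Lemma~\ref{lem:esitugh}) and $\|w\|_{L^2\II}\le c$, and then invoke the convolution-quadrature consistency estimate for the backward-Euler scheme. The decisive point is that applying the extra factor $A_h(q)$ to the solution operator shifts the temporal weight of the standard $O(\tau)$ CQ error from $t_n^{-1}$ to $t_n^{-\al-1}$, i.e.\ $\|A_h(q)\bigl(F_{h,\tau}^n(q)-F_{q,h}(t_n)\bigr)\|\le c\,\tau\,t_n^{-\al-1}$; combined with $\|a\|_{L^2\II}\le c$ this gives the contribution $c\,\tau\,t_n^{-\al-1}$. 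Collecting the two bounds and absorbing $t_n^{-\al}$ and $t_n^{-\al-1}$ into $\max(t_n^{-\al},t_n^{-\al-1})$ produces the claimed estimate.

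The main obstacle is establishing the weighted CQ consistency estimate $\|A_h(q)(F_{h,\tau}^n(q)-F_{q,h}(t_n))\|\le c\,\tau\,t_n^{-\al-1}$ with the sharp singular weight. I would prove it along the contour-integral route underlying \eqref{sol_dis_op} and \eqref{eq:op-fully}: write the difference as an integral of the resolvents over $\contour$ and its truncation $\Gamma_{\theta,\sigma}^\tau$, use the symbol comparison $|\delta_\tau(e^{-z\tau})^\al-z^\al|\le c\,\tau|z|^{\al+1}$ valid on the contour together with the resolvent bound $\|A_h(q)(z^\al+A_h(q))^{-1}\|\le c$, and optimize the contour radius $\sigma\sim t_n^{-1}$. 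Controlling the truncation at $|\Im z|=\pi/\tau$ and the extra factor of $A_h(q)$, which removes one power of $|z|^{-\al}$ from the integrand, is precisely what turns the generic $t_n^{-1}$ weight into $t_n^{-\al-1}$; this is the technical heart of the argument, while the remaining estimates follow directly from the already-established smoothing and reconstruction lemmas.
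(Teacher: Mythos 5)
Your proof follows essentially the same route as the paper's: reduce to $w_h(t;q)=F_{q,h}(t)w_h(0;q)$ and $w_h^n(q)=F_{h,\tau}^n(q)w_h^0(q)$, split the difference into a convolution-quadrature consistency term and an initial-value term, and bound these by the weighted consistency estimate $\|A_h(q)(F_{q,h}(t_n)-F_{h,\tau}^n(q))\|\le c\tau t_n^{-\al-1}$ (which the paper simply cites from \cite[Lemma 4.2]{zhang2022identification} rather than re-deriving via the contour argument you sketch) together with the smoothing bound of Lemma~\ref{lem:op-fully} and the initial-value error of Lemma~\ref{lem:uhn-err}. One small imprecision: Lemmas~\ref{lem:regugamma} and~\ref{lem:esitugh} only yield $\|u_{\gamma,h}(0;q)\|_{L^2\II}\le c(1+\gamma^{-1}h^2)$ rather than a $\gamma$-uniform bound on your datum $a$, but the resulting factor $\tau(1+\gamma^{-1}h^2)t_n^{-\al-1}$ is still absorbed into the claimed right-hand side, exactly as in the paper's own computation.
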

\begin{proof}
Let $w_h(t;q)=\Dal  u_{\gamma,h}(t;q)$ and   $w_h^n(q)=\bDal  u_{\gamma,h}^{\delta,n}(q)$. From \eqref{eqn:back_non_reg:semi} and \eqref{eq:back_non_reg:noisy:fully}  there holds
    \begin{align*}
        \Dal w_h(t;q)+A_h(q)w_h(t;q)&=0,\quad \text{with } \quad w_h(0;q)=-A_h(q) u _{\gamma,h}(0;q)+P_h [f - qD_h], \\
          \bDal w_h^n(q)+A_h(q)w_h^n(q)&=0,\quad \text{with } \quad w_h^0(q)=-A_h(q) u _{\gamma,h}^{\delta,n}(q)+P_h [f - qD_h].
    \end{align*}
    By means of Laplace transform,
 the solutions    can be written as 
    \begin{align*}
        w_h(t;q)=F_{q,h}(t)w_h(0;q),\quad w_h^n(q)=F_{h,\tau}^n(q) w_h^0(q).
    \end{align*}
   Therefore,  employing Lemma~\ref{lem:esitugh}, Lemma~\ref{lem:regugamma},  Lemma~\ref{lem:op-fully}, Lemma~\ref{lem:uhn-err} and \cite[Lemma 4.2]{zhang2022identification} gives
    \begin{equation*}
    \begin{aligned}
       \| w_h(t_n;q)-w_h^n(q)\|_{L^2\II}\le& \lnorm(F_{q,h}(t_n)-F_{h,\tau}^n(q))w_h(0;q)\rnorm_{L^2\II}+\lnorm F_{h,\tau}^n(q)(w_h(0;q)-w_h^0(q))\rnorm_{L^2\II}\\
       \le & \lnorm A_h(q)(F_{q,h}(t_n)-F_{h,\tau}^n(q)) u _{\gamma,h}(0;q)\rnorm_{L^2\II}\\&+\lnorm(F_{q,h}(t_n)-F_{h,\tau}^n(q))P_h [f - qD_h]\rnorm_{L^2\II}
       \\&+\lnorm A_h(q)F_{h,\tau}^n(q)( u _{\gamma,h}(0;q)- u _{\gamma,h}^{\delta,0}(q))\rnorm_{L^2\II}\\
       \le &c \tau t_n^{-\al-1}(1+\gamma^{-1}h^2)+c\tau t_n^{-\al}+ c(\tau +\gamma^{-1}h^2)t_n^{-\al}.
    \end{aligned}
    \end{equation*}
    This completes the proof of the lemma.
\end{proof}


%
For the potential $q\in\A$ and initial condition $v\in X_h^0$, we denote the fully discrete solution $ u_h^n$ to problem~\eqref{eqn:fy_hqn} by $ u_h^n(q,v)$. 
  The following two lemmas, which are discrete counterparts of Lemmas~\ref{lem:stab-0}--\ref{lem:stab-00}, play a crucial  role in establishing the contraction of the operator 
 $K_{h,\tau}$. Due to the length of the proof, it is deferred to the Appendix.
 \begin{lemma}\label{lem:Dal-uhn}
For $i=1,2$, let $ u_h^n(q_i,\bar v_i)$ be the solution to the fully discrete scheme \eqref{eqn:fy_hqn}, with potential $q_i\in \A$ and initial condition  $\bar v_i\in X_h^0$.
Then there holds 
\begin{equation*}
\|  \bar \partial_\tau^\alpha ( u_h^n(q_1,\bar v_1) -  u_h^n(q_2,\bar v_2)) \|_{L^2\II} \le \bar  c_{0,\bar v_2,f,D} t_n^{-\alpha}\left(\|\bar v_1-\bar v_2\|_{L^2\II}+\| q_1 - q_2 \|_{L^2\II}\right),
\end{equation*}
where the constant $\bar  c_{0,\bar v_2,f,D}>0$ is independent of $h$, $\tau$, $q_1$, $q_2$ and $t_n$, but may depend on   $\bar v_2$, $f$ and  $D$.
\end{lemma}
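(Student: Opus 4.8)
The plan is to transcribe the continuous argument of Lemma~\ref{lem:stab-0} into the fully discrete setting, replacing the solution operators $F_q(t),E_q(t)$ by their convolution-quadrature analogues $F_{h,\tau}^n(q),E_{h,\tau}^n(q)$, time integrals by Riemann sums $\tau\sum_j$, and the continuous derivative $\partial_t$ by the backward difference quotient $\partial_\tau$. First I set $\omega_h^n=u_h^n(q_1,\bar v_1)-u_h^n(q_2,\bar v_2)$ and subtract the two copies of scheme \eqref{eqn:fy_hqn}. Using that $A_h(q_1)-A_h(q_2)$ acts as the projected multiplication $P_h(q_1-q_2)\,\cdot$ in the weak form, $\omega_h^n$ solves $\bDal\omega_h^n+A_h(q_1)\omega_h^n=\bar\omega_h^n$ with $\omega_h^0=\bar v_1-\bar v_2$, where $\bar\omega_h^n=P_h[(q_2-q_1)(u_h^n(q_2,\bar v_2)+D_h)]$. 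The representation \eqref{eqn:sol-rep-fully} then gives $\omega_h^n=F_{h,\tau}^n(q_1)\omega_h^0+\tau\sum_{j=1}^n E_{h,\tau}^{n-j}(q_1)\bar\omega_h^j$.

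Next I would compute $\bDal\omega_h^n$ directly from the governing equation together with the identity $A_h(q)E_{h,\tau}^m(q)=-\partial_\tau F_{h,\tau}^{m+1}(q)$, so that the convolution sum telescopes into the same structural form as in the continuous proof, namely, up to harmless index-shift terms involving $F_{h,\tau}^0,F_{h,\tau}^1$,
\[
\bDal\omega_h^n = A_h(q_1)F_{h,\tau}^n(q_1)\Big(-\omega_h^0+\big(A_h(q_1)F_{h,\tau}^n(q_1)\big)^{-1}\partial_\tau\phi_h^n\Big),\qquad \phi_h^n:=\tau\sum_{j=1}^n F_{h,\tau}^{n-j}(q_1)\,\bar\omega_h^j.
\]
The smoothing bound of Lemma~\ref{lem:op-fully}(i) (with $\nu=1$) then yields $\|\bDal\omega_h^n\|_{L^2\II}\le c_0 t_n^{-\al}\big(\|\omega_h^0\|_{L^2\II}+\|(A_h(q_1)F_{h,\tau}^n(q_1))^{-1}\partial_\tau\phi_h^n\|_{L^2\II}\big)$, so it remains to control the second term by $c\|q_1-q_2\|_{L^2\II}$.

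For this I follow the continuous template in two stages. A direct bound of $\|(A_h(q_1)F_{h,\tau}^n(q_1))^{-1}\phi_h^n\|_{L^2\II}$ uses the ratio estimate $\|F_{h,\tau}^{n-j}(q_1)F_{h,\tau}^n(q_1)^{-1}\|\le c(1+t_{n-j}^{-\al}t_n^\al)$ (the general-index analogue of Lemma~\ref{lem:op-fully}(ii)--(iii), proved by the same eigenvalue representation) together with the discrete Sobolev embedding Lemma~\ref{Lem:sobembdis}, which turns $A_h(q_1)^{-1}\bar\omega_h^j$ into $\|q_1-q_2\|_{L^2\II}\,\|u_h^j(q_2,\bar v_2)+D_h\|_{L^2\II}$, and the regularity $\|u_h^j(q_2,\bar v_2)\|_{L^2\II}\le\hat c$ of Lemma~\ref{lem:Dalufully}; this gives $\le c\,t_n\|q_1-q_2\|_{L^2\II}$. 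To control $\partial_\tau\phi_h^n$ I establish the discrete counterpart of the continuous splitting of $t\phi(t)$: inserting $t_n=t_{n-j}+t_j$ into the convolution sum defining $t_n\phi_h^n$ splits it into two discrete convolutions, one weighting $F_{h,\tau}^{n-j}(q_1)$ by $t_{n-j}$ and one weighting $\bar\omega_h^j$ by $t_j$. Applying the discrete Leibniz rule $\partial_\tau(t_n\phi_h^n)=t_n\partial_\tau\phi_h^n+\phi_h^{n-1}$ and differencing the two convolutions produces two terms $I_{c,1}^h,I_{c,2}^h$ analogous to $I_{c,1},I_{c,2}$ in Lemma~\ref{lem:stab-0}, each bounded by $c\,t_n\|q_1-q_2\|_{L^2\II}$ using the ratio bounds of Lemma~\ref{lem:op-fully}(ii)--(iii), the discrete regularity $t_j\|\partial_\tau u_h^j(q_2,\bar v_2)\|_{L^2\II}\le\hat c$ of Lemma~\ref{lem:Dalufully}, and Lemma~\ref{Lem:sobembdis}. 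Combining via $\|(A_hF^n)^{-1}\partial_\tau\phi_h^n\|_{L^2\II}\le t_n^{-1}\big(\|(A_hF^n)^{-1}\partial_\tau(t_n\phi_h^n)\|_{L^2\II}+\|(A_hF^n)^{-1}\phi_h^{n-1}\|_{L^2\II}\big)$ gives the desired $\le c\|q_1-q_2\|_{L^2\II}$, and substituting back produces the claimed estimate with $\|\omega_h^0\|_{L^2\II}=\|\bar v_1-\bar v_2\|_{L^2\II}$.

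The main obstacle is the discrete convolution bookkeeping. Unlike the continuous case, $\partial_\tau$ obeys only an inexact Leibniz rule, the CQ operators carry an index shift (the integrand $e^{zt_{n-1}}$ in \eqref{eq:op-fully}), and rearranging $t_n\phi_h^n$ into two convolution sums generates edge/boundary contributions from discrete summation by parts (the $F_{h,\tau}^0,F_{h,\tau}^1$ terms) that must be shown to be harmless. Keeping these edge terms aligned with the continuous structure, while invoking only the uniform-in-$(q,h,\tau)$ bounds of Lemma~\ref{lem:op-fully}, is the delicate part; the remaining estimates are routine discrete analogues of those in Lemma~\ref{lem:stab-0}.
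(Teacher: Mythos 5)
Your proposal follows essentially the same route as the paper's proof in Appendix A: the same splitting $\bDal\omega_h^n=-A_h(q_1)F_{h,\tau}^n(q_1)\omega_h^0+\partial_\tau\phi_h^n$ via the identity $A_h(q)E_{h,\tau}^n(q)=-\partial_\tau F_{h,\tau}^{n+1}(q)$, the same two-stage control of $\phi_h^n$ and of $\partial_\tau(t_{n+1}\phi_h^n)$ through the discrete Leibniz rule, and the same ratio/smoothing and discrete Sobolev bounds. The only cosmetic difference is that the paper factors out the semidiscrete operator $A_h(q_1)F_{h,q_1}(t_n)$, so that Lemma~\ref{lem:op-fully}(iii) (stated for general $n$) applies directly, whereas you factor out the fully discrete one and correctly note that the general-index ratio bound follows from the same spectral argument; the proposal is correct.
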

\begin{lemma}\label{lem:stab-0_reg_nois}
Let  potentials $q_i\in \A, \ i=1,2$ and    $ u_{\gamma,h}^{\delta,0}(q_i)$ satisfy \eqref{solu:ugamdel_fully}. Additionally, suppose that  $\gamma^{-1}(\delta+h^2)\le c$ for some constant $c$.  Then it holds that
\begin{equation*}
\left\| u_{\gamma,h}^{\delta,0}(q_1) - u_{\gamma,h}^{\delta,0}(q_2)\right \|_{L^2\II}\le \bar  c_{1,g_1,b,f}(1+T_1^\al)\|q_1-q_2\|_{L^2\II},
\end{equation*}
where the constant $\bar   c_{1,g_1,b,f}>0$ independent of $q_1$, $ q_2$ and $T_1$, but may depend on   $g_1$, $f$ and  $D$.
\end{lemma}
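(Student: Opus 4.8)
The plan is to transpose the argument of Lemma~\ref{lem:stab-00} to the fully discrete, quasi-boundary-value setting. Writing $e^n:=u_{\gamma,h}^{\delta,n}(q_1)-u_{\gamma,h}^{\delta,n}(q_2)$ and subtracting the two instances of \eqref{eq:back_non_reg:noisy:fully}, using $(A_h(q_1)-A_h(q_2))\psi=P_h((q_1-q_2)\psi)$, I would first derive the error equation
\[
\bDal e^n+A_h(q_1)\,e^n=P_h\lsb(q_2-q_1)\lb u_{\gamma,h}^{\delta,n}(q_2)+D_h\rb\rsb=:G^n,\qquad \gamma e^0+e^{N_1}=0 .
\]
Since both solutions share the same quasi-boundary datum $P_h(g_1^\delta-D_h)$, the observation-difference term (the analogue of ${\rm II}_3$ in Lemma~\ref{lem:stab-00}) drops out, which is exactly why the right-hand side of the assertion contains only $\|q_1-q_2\|_{L^2\II}$. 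The discrete Duhamel formula $e^n=F_{h,\tau}^n(q_1)e^0+\tau\sum_{j=1}^{n}E_{h,\tau}^{n-j}(q_1)G^j$ evaluated at $n=N_1$ and combined with $\gamma e^0+e^{N_1}=0$ then yields $e^0=-\lb\gamma I+F_{h,\tau}^{N_1}(q_1)\rb^{-1}\Phi^{N_1}$, where $\Phi^{N_1}:=\tau\sum_{j=1}^{N_1}E_{h,\tau}^{N_1-j}(q_1)G^j$, so the task reduces to bounding $\|e^0\|_{L^2\II}$.

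A preliminary step is the uniform bound $\|u_{\gamma,h}^{\delta,n}(q_2)+D_h\|_{L^2\II}\le c$ for all $n\ge0$, and this is where the hypothesis $\gamma^{-1}(\delta+h^2)\le c$ enters. Combining the triangle inequality with Lemmas~\ref{lem:esitugh} and~\ref{lem:uhn-err} and the $s=0$ bound $\|u_\gamma(0;q_2)\|_{L^2\II}\le c$ of Lemma~\ref{lem:regugamma} gives $\|u_{\gamma,h}^{\delta,0}(q_2)\|_{L^2\II}\le c$; applying Lemma~\ref{lem:op-fully}(i) to the representation \eqref{eqn:udisdeelayh} then propagates this bound to every time level. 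Consequently the discrete Sobolev embedding Lemma~\ref{Lem:sobembdis} controls $A_h(q_1)^{-1}G^j$ by $c\|q_1-q_2\|_{L^2\II}$, uniformly in $j$, and a regularised counterpart of Lemma~\ref{lem:Dalufully} (valid because the initial datum $u_{\gamma,h}^{\delta,0}(q_2)$ is now bounded) supplies $\|t_j\partial_\tau u_{\gamma,h}^{\delta,j}(q_2)\|_{L^2\II}\le c$, which I will need for the time-difference terms.

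To estimate $e^0$ I would use $A_h(q_1)E_{h,\tau}^{n}(q_1)=-\partial_\tau F_{h,\tau}^{n+1}(q_1)$ and regroup $A_h(q_1)^{-1}$ onto the source, mirroring the continuous identity ${\rm II}_2(t)=(A_{q_1}F_{q_1}(t))^{-1}(\phi'(t)-\psi(t))$ behind Lemma~\ref{lem:stab-00}. This splits $e^0$ into a boundary piece, controlled by $\|(A_h(q_1)F_{h,\tau}^{N_1}(q_1))^{-1}\|\le c(1+T_1^\al)$ from Lemma~\ref{lem:op-fully}(i) acting on $G^{N_1}$ at the fixed level $t_{N_1}=T_1$ (where the solution is smooth, so $\|u_{\gamma,h}^{\delta,N_1}(q_2)+D_h\|_{L^\infty\II}\le c$ and $\|G^{N_1}\|_{L^2\II}\le c\|q_1-q_2\|_{L^2\II}$), and a convolution piece. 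For the convolution piece a term-by-term bound would force a divergent factor $T_1^\al\tau\sum_k t_k^{-1-\al}\sim(T_1/\tau)^\al$, so --- precisely as in Lemmas~\ref{lem:stab-0}--\ref{lem:stab-00} and their discrete counterpart Lemma~\ref{lem:Dal-uhn} --- I would reproduce the discrete $t_n\phi^n$ device (weighting the convolution sum by $t_n$, differencing, and splitting at $j\approx N_1/2$). Because $F_{h,\tau}^{N_1}(q_1)$ and $\partial_\tau F_{h,\tau}^{k}(q_1)$ are functions of the self-adjoint operator $A_h(q_1)$, the scalar inequality $(\gamma+a)^{-1}\le a^{-1}$ lets me replace the regularised resolvent $\lb\gamma I+F_{h,\tau}^{N_1}(q_1)\rb^{-1}$ by $F_{h,\tau}^{N_1}(q_1)^{-1}$ in the resulting products, after which the ratio bounds of Lemma~\ref{lem:op-fully}(ii) turn the singular kernels into integrable sums, with Lemma~\ref{Lem:sobembdis} supplying the uniform $L^2$ control of the products with $q_1-q_2$. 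Collecting the pieces yields $\|e^0\|_{L^2\II}\le c(1+T_1^\al)\|q_1-q_2\|_{L^2\II}$, the factor $(1+T_1^\al)$ arising exactly from the $t_k^{-\al}T_1^\al$ growth in these bounds.

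The main obstacle will be this last step: reproducing the continuous $t_n\phi^n$ manipulation at the fully discrete level while simultaneously carrying the regularised resolvent $\lb\gamma I+F_{h,\tau}^{N_1}(q_1)\rb^{-1}$ through every estimate, and verifying that the split at $j\approx N_1/2$ genuinely cancels the spurious negative powers of $\tau$ so that the final constant is independent of $\gamma$, $h$, $\tau$ and $T_2$. The bookkeeping of the discrete summation by parts, rather than any new idea, is what makes the argument long, and accordingly it is deferred to the Appendix.
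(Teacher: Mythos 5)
Your proposal is correct and reaches the stated bound by the same underlying machinery as the paper (the ratio bounds of Lemma~\ref{lem:op-fully}(ii), the discrete Sobolev embedding Lemma~\ref{Lem:sobembdis}, the \textsl{a priori} bound on $u_{\gamma,h}^{\delta,0}(q_2)$ extracted from $\gamma^{-1}(\delta+h^2)\le c$, and the discrete $t_n\phi^n$/summation-by-parts device of Lemma~\ref{lem:Dal-uhn}), but it organizes the argument differently. The paper never forms a single error equation: it splits $u_{\gamma,h}^{\delta,0}(q_1)-u_{\gamma,h}^{\delta,0}(q_2)$ into a term ${\rm II}_{h,\tau,1}^{N_1}=(F_{h,\tau}^{N_1}(q_2)-F_{h,\tau}^{N_1}(q_1))(\gamma I+F_{h,\tau}^{N_1}(q_1))^{-1}u_{\gamma,h}^{\delta,0}(q_2)$ and a source-perturbation term ${\rm II}_{h,\tau,2}^{N_1}$, realizes each as the solution of its own auxiliary discrete subdiffusion equation with zero initial data, and estimates them separately (split at $j\approx N_1/2$ with Abel summation for the first, reuse of the Lemma~\ref{lem:Dal-uhn} estimate for the second). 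You instead write one error equation for $e^n=u_{\gamma,h}^{\delta,n}(q_1)-u_{\gamma,h}^{\delta,n}(q_2)$ with source $P_h[(q_2-q_1)(u_{\gamma,h}^{\delta,n}(q_2)+D_h)]$ and the homogeneous quasi-boundary condition $\gamma e^0+e^{N_1}=0$, which merges the paper's two cases into a single application of the Lemma~\ref{lem:Dal-uhn} machinery; this is a genuine simplification in bookkeeping. The price is that your source involves the regularized solution at every time level, so you need $\lnorm u_{\gamma,h}^{\delta,j}(q_2)\rnorm_{L^2\II}\le c$, $t_j\lnorm\partial_\tau u_{\gamma,h}^{\delta,j}(q_2)\rnorm_{L^2\II}\le c$ for all $j$, and an $L^\infty$ bound at $j=N_1$ for the boundary term $(A_h(q_1)(\gamma I+F_{h,\tau}^{N_1}(q_1)))^{-1}G^{N_1}$ --- all of which follow from \eqref{eqn:udisdeelayh}, Lemma~\ref{lem:op-fully}(i) and Lemma~\ref{Lem:sobembdis} once $\lnorm u_{\gamma,h}^{\delta,0}(q_2)\rnorm_{L^2\II}\le c$ is in hand, exactly as you indicate; the paper instead only needs the corresponding bounds for the forward solution (Lemma~\ref{lem:Dalufully}) plus the single bound of its Lemma~\ref{lem:pfopri}. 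One minor caveat: you obtain the \textsl{a priori} bound on $u_{\gamma,h}^{\delta,0}(q_2)$ by a triangle inequality through Lemmas~\ref{lem:esitugh} and~\ref{lem:uhn-err}, whose constants are allowed to depend on $T_1$, whereas the paper's direct estimate (Lemma~\ref{lem:pfopri}) tracks the explicit factor $(1+T_1^\al)$; since $T_1$ is fixed in all applications this does not affect correctness, but if you want the constant $\bar c_{1,g_1,b,f}$ to be literally $T_1$-independent as stated you should bound $u_{\gamma,h}^{\delta,0}(q_2)$ directly from the representation \eqref{solu:ugamdel_fully} as the paper does.
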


\subsection{Numerical reconstruction and error estimates}

This section is devoted to the construction of a robust fully discrete scheme for the simultaneous reconstruction of the initial condition and the potential. 
The subsequent analysis is carried out under the following assumption.
\begin{assumption}\label{assump:numerics}
We assume that the exact potential $q^\dag$, the exact initial condition $v^\dag$,  the source term $f$ satisfy the following conditions:
\begin{itemize}
 \item[(i)] $v^\dag-D \in \dot H^{s_1}\II$,  $f-q^\dag D\in H^{s_2}\II$, $q^\dag\in\A$;
\item[(ii)] $ q^\dag|_{\partial\Omega}$ is  \textsl{a priori} known, $v^\dag-D \ge 0$ a.e. in $\Omega$, $f-q^\dag D \ge 0$, a.e. in $\Omega$.
\end{itemize}

\end{assumption}


\begin{remark}\label{rk:41}
From the maximum principle Lemma~\ref{positive}, it holds that $g_2\ge M$  based on the above Assumption~\ref{assump:numerics}. It is reasonable to assume that the noisy data $g_2^\delta \geq M$. Otherwise, we may revise the observational data by $ \tilde g_2^\delta =\max(g_2^\delta(x),M),\ \forall x\in \Omega.$
There holds  $ \| \tilde g_2^\delta-g_2\|_{ L^2\II}\le  \|  g_2^\delta-g_2\|_{ L^2\II}\le\delta.$
Then $\tilde  g_2^\delta$ can be used as the observational data in computation, with $ \tilde g_2^\delta \ge M$.
\end{remark}
Due to the roughness of the noise, $\Delta g_2^\delta$ may not be well-defined in $L^2\II$, necessitating a numerical approximation of the unknown function $\Delta g_2$. To address this, we employ a two-grid method, where $H$ and $\bar{H}$ represent the spatial mesh sizes with $X_H \subset X_{\bar{H}}$. The function is projected onto the fine grid with mesh size $\bar{H}$, while the Laplacian is approximated on the coarse grid with mesh size $H$.

We define a function $\psi_H \in X_H$ such that
\begin{equation}\label{eqn:psih}
\begin{aligned}
 \gamma(\psi_H)=P_{H}^{\partial}(f-q^\dag b),\quad
(\psi_H,\phi_H)=-(\nabla P_{X_{\bar H}} g_2^\delta, \nabla \phi_H)\quad~~\text{for all}~~ \phi_H \in X_H^0, 
\end{aligned}
\end{equation}
where  the $L^2$ orthogonal projection $P_{X_H}:L^2(\Omega)\rightarrow X_H$ and $P_H^\partial:L^2(\partial\Omega)\rightarrow X_H^\partial$ are defined by
\begin{align*}
\int_{\Omega} (P_{X_H} \psi) \ \chi_H\ \dx = \int_{\Omega}  \psi \ \chi_H\ \dx~~\text{and}~~ 
 \int_{\partial\Omega} (P_H^\partial \psi) \ \gamma_0(\chi_H)\ \ds = \int_{\partial\Omega}  \psi \ \gamma_0(\chi_H)\ \ds \quad\forall \chi_H\in X_H,
\end{align*}

Then the next lemma shows that $\psi_H$ is an approximation to $ \Delta g$ with appropriate choices of $H$ and $\bar H$.

\begin{lemma}\label{lem:reg-err}
Suppose that Assumption \ref{assump:numerics} is valid.
Let $\psi_H \in X_H$ be the function defined in \eqref{eqn:psih}.
Then there holds
\begin{equation*}
\|  \psi_H- \Delta g_2 \|_{L^2\II} \le c \Big(\delta \bar H^{-\frac{1}{2}} H^{-\frac{3}{2}}+\bar H^{\frac{3}{2}}H^{-\frac{3}{2}}+ H^{\min\{s_1,s_2\}}\Big),
\end{equation*}
where the constant $c$ is independent of $H, \bar H$ and $\delta$.
\end{lemma}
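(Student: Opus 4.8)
The plan is to bound $\|\psi_H-\Delta g_2\|_{L^2\II}$ by comparing the two through a single intermediate function and by exploiting a discrete (DG-type) integration-by-parts identity that is responsible for the fractional mesh powers. First I would record a variational characterization of $\Delta g_2$. Since $g_2=U(T_2)=u(T_2;q^\dag,v^\dag-D)+D$ and $\Delta D=0$, one has $\Delta g_2=\Delta u(T_2)$, and the governing equation \eqref{eqn:pde} gives $\Delta u(T_2)=q^\dag u(T_2)-A_{q^\dag}u(T_2)$. Using the solution representation \eqref{eq:solutionrepre} and the smoothing estimates of Lemma~\ref{lem:op} at the fixed time $T_2$, together with Assumption~\ref{assump:numerics} (so that $v^\dag-D\in\dot H^{s_1}\II$ and $f-q^\dag D\in H^{s_2}\II$, whence also $q^\dag\in H^{s_2}\II$), I would establish $\Delta g_2\in H^{\min\{s_1,s_2\}}\II$ and in particular $g_2\in H^2\II$. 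Moreover, restricting \eqref{eqn:pdeo} to $\partial\Omega$, where $U\equiv b$ is independent of $t$ and hence $\Dal U=0$, identifies the boundary trace of $\Delta g_2$ explicitly in terms of $f-q^\dag b$; this is exactly the datum whose $P_H^\partial$-projection prescribes the boundary value of $\psi_H$ in \eqref{eqn:psih}.

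Second I would introduce the auxiliary function $\hat\psi_H\in X_H$ defined with the \emph{exact, unprojected} data, namely $(\hat\psi_H,\phi_H)=-(\nabla g_2,\nabla\phi_H)$ for all $\phi_H\in X_H^0$, with the same boundary trace $\gamma_0(\hat\psi_H)=P_H^\partial(f-q^\dag b)$. Integrating by parts and using $\gamma_0\phi_H=0$ shows $(\hat\psi_H,\phi_H)=(\Delta g_2,\phi_H)$, so $\hat\psi_H$ is nothing but a finite element projection of $\Delta g_2$ onto $X_H$ with the boundary value fixed above. A standard Galerkin projection estimate, using the regularity from the first step and the approximation property of $P_H^\partial$ on the boundary, then yields $\|\hat\psi_H-\Delta g_2\|_{L^2\II}\le cH^{\min\{s_1,s_2\}}$, which accounts for the last term of the asserted bound.

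Third, and this is the crux, I would estimate $\psi_H-\hat\psi_H$. Since both functions carry the boundary trace $P_H^\partial(f-q^\dag b)$, the difference $\eta:=\psi_H-\hat\psi_H$ lies in $X_H^0$; subtracting the two variational definitions and testing against $\eta$ gives $\|\eta\|_{L^2\II}^2=-(\nabla(P_{X_{\bar H}}g_2^\delta-g_2),\nabla\eta)$. I would split $P_{X_{\bar H}}g_2^\delta-g_2=P_{X_{\bar H}}(g_2^\delta-g_2)+(P_{X_{\bar H}}g_2-g_2)$ into a noise part and a fine-grid approximation part. Because $\eta$ is piecewise linear on $\mathcal{T}_H$, $\Delta\eta=0$ on every coarse element, so an element-wise integration by parts reduces $(\nabla v,\nabla\eta)$ to a sum over coarse edges $F$ of $\int_F v\,[\partial_n\eta]_F\,\ds$ (interior jumps together with one-sided boundary traces). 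On each edge I would use the trace-scaling bound $\|[\partial_n\eta]_F\|_{L^2(F)}\le cH^{-1/2}\|\nabla\eta\|_{L^2(\omega_F)}$ together with the global inverse inequality $\|\nabla\eta\|_{L^2\II}\le cH^{-1}\|\eta\|_{L^2\II}$; for the noise part the fine-space trace bound $\|P_{X_{\bar H}}(g_2^\delta-g_2)\|_{L^2(F)}\le c\bar H^{-1/2}\|P_{X_{\bar H}}(g_2^\delta-g_2)\|_{L^2(S_F)}$ with $\|P_{X_{\bar H}}(g_2^\delta-g_2)\|_{L^2\II}\le\delta$ gives the contribution $\delta\,\bar H^{-1/2}H^{-3/2}$, while for the approximation part the local trace-plus-approximation bound $\|P_{X_{\bar H}}g_2-g_2\|_{L^2(F)}\le c\bar H^{3/2}\|g_2\|_{H^2(S_F)}$ (using $g_2\in H^2\II$) gives $\bar H^{3/2}H^{-3/2}$. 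Collecting the three contributions by the triangle inequality yields the claim.

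The main obstacle is precisely this third step: the naive route $\|\nabla P_{X_{\bar H}}(g_2^\delta-g_2)\|\le c\bar H^{-1}\delta$ followed by Cauchy--Schwarz only delivers $\delta\bar H^{-1}H^{-1}$, not the asserted $\delta\,\bar H^{-1/2}H^{-3/2}$. Recovering the sharper split of mesh powers is what forces the element-wise integration by parts and the careful distribution of trace factors ($H^{-1/2}$ from the coarse grid and $\bar H^{-1/2}$ from the fine grid) against the inverse factor ($H^{-1}$); correctly treating the boundary edges and the finite-overlap summation over edge patches $S_F,\omega_F$ also requires care. A secondary difficulty is the regularity bookkeeping for $\Delta g_2$ in Step~1, especially the product term $q^\dag u(T_2)$, whose membership in $H^{\min\{s_1,s_2\}}\II$ relies on inferring $q^\dag\in H^{s_2}\II$ from $f-q^\dag D\in H^{s_2}\II$ and the admissibility of $q^\dag$.
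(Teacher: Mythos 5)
Your proposal follows essentially the same route as the paper: the crux you single out — recovering the split mesh powers $\delta \bar H^{-\frac{1}{2}}H^{-\frac{3}{2}}+\bar H^{\frac{3}{2}}H^{-\frac{3}{2}}$ by element-wise integration by parts against the piecewise-linear test function, paying an $H^{-1/2}$ trace factor on the coarse grid, a $\bar H^{-1/2}$ (resp.\ $\bar H^{3/2}$) trace factor on the fine grid, and an $H^{-1}$ inverse factor — is exactly the paper's proof of its claim \eqref{eqn:app-g2}, and testing against $\eta$ is equivalent to the paper's sup/duality formulation. The only real difference is cosmetic and sits in the consistency term: where you invoke a ``standard Galerkin projection estimate'' for the auxiliary $\hat\psi_H$, the paper splits explicitly through the discrete harmonic extension $D^0_H$ of the boundary datum and the projection $P_H(\Delta g_2-D^0)$, and the $O(H^{\min\{s_1,s_2\}})$ bound there is not standard — it rests on the FEM estimate for the Laplace equation with rough ($L^2$-type) boundary data, estimate \eqref{eqn:app-D0}, together with the $\dot H^s$ approximation property of $P_H$; these are precisely the quasi-optimal competitor your argument would still need to exhibit, so you should not expect to bypass them.
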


\begin{proof}
Let $D^0(x)$ be the solution of 
\begin{equation*}
    -\Delta D^0(x)=0 \quad \text{in} ~\Omega \qquad \text{with}\quad D^0(x)=f-q^\dag b\quad \text{on}~\partial\Omega.
\end{equation*}
Let $D^0_H$ be the numerical solution of $D^0$  with  $\gamma(D^0_H)=P_{H}^{\partial}(f-q^\dag b) $. 
Then there holds the estimate \cite[Theorem 1]{BrambleKing:1994} and \cite[Theorem 5.5]{berggren2004approximations} 
\begin{equation}\label{eqn:app-D0}
\|D^0_H-D^0\|_{L^2\II}\le cH^{\min \{s_1,s_2\}}.
\end{equation}
Moreover, according to Assumption \ref{assump:numerics}, we conclude that 
$\Delta g_2-D^0\in  \dH {\min \{s_1,s_2\}}$. 
The approximation property of $L^2$ orthogonal projection $P_H$ in \eqref{ph-bound} and  \cite[eq. (2.11)]{steinbach2001stability} 
leads to 
$$\|P_H(\Delta g_2-D^0)-(\Delta g_2-D^0)\|_{L^2\II}\le  cH^{\min \{s_1,s_2\}}.$$
This together with the estimate \eqref{eqn:app-D0} implies
\begin{equation*}
 \begin{aligned}
&  \| \psi_H - \Delta g_2  \|_{L^2\II} \\
\le &
\| (\psi_H-D^0_H) - P_H(\Delta g_2-D^0)  \|_{L^2\II}+
\|P_H(\Delta g_2-D^0)- (\Delta g_2-D^0)\|_{L^2\II} +  \|D_H^0 - D^0 \|_{L^2\II}\\
\le &\| (\psi_H-D^0_H) - P_H(\Delta g_2-D^0)  \|_{L^2\II} + cH^{\min \{s_1,s_2\}}.
 \end{aligned}
\end{equation*}
Then we apply the estimate \eqref{eqn:app-D0} 
and the definition of $\psi_h$ in \eqref{eqn:psih} to obtain that
 \begin{align*}
     \|(\psi_H-D^0_H) - P_H(\Delta g_2-D^0)\|_{L^2\II}=&\sup_{\phi_H \in X_H^0}\frac{\big((\psi_H-D^0_H) - (\Delta g_2-D^0), \phi_H\big)}{\| \phi_H \|_{L^2\II}}\\=&\sup_{\phi_H \in X_H^0}\frac{(\nabla (g_2-P_{X_{\bar H}} g_2^\delta), \nabla \phi_H)+(D^0 - D^0_H,\phi_H)}{\| \phi_H \|_{L^2\II}}\\
     \le &\sup_{\phi_H \in X_H^0} \frac{(\nabla (g_2 - P_{X_{\bar H}} g_2^\delta), \nabla \phi_H)}{\| \phi_H \|_{L^2\II}} + c H^{\min \{s_1,s_2\}}.\\
 \end{align*}
Next, we aim to prove the claim that 
\begin{equation}\label{eqn:app-g2}
\sup_{\phi_H \in X_H^0}\frac{(\nabla (g_2- P_{X_{\bar H}} g_2^\delta), \nabla \phi_H)}{\| \phi_H \|_{L^2\II}}
\le c(\delta \bar H^{-\frac{1}{2}} H^{-\frac{3}{2}}+\bar H^{\frac{3}{2}}H^{-\frac{3}{2}} ).
\end{equation}
Let $\{ K_j \}_{j=1}^{N_H} = \T_H$ and  $\{ \bar K_j \}_{j=1}^{N_{\bar H}} = \T_{\bar H}$, with $\T_{\bar H}\subset \T_{ H}$. We can derive 
\begin{align*}
& |(\nabla (P_{X_{\bar H}} g_2^\delta-g_2), \nabla \phi_H)|
\le \sum_{j=1}^{N_H}  
\Big|\int_{K_j} \nabla  (P_{X_{\bar H}} g_2^\delta-g_2) \cdot \nabla \phi_H \, \d x \Big|=\sum_{j=1}^{N_H}\Big|\int_{\partial K_j}  \frac{\partial \phi_H }{\partial n_j} (P_{X_{\bar H}} g_2^\delta-g_2) \d s\Big|\\
\le &\sum_{j=1}^{N_H} \Big\|\frac{\partial \phi_H }{\partial n_j}\Big\|_{L^2(\partial K_j)}\left( \|P_{X_{\bar H}} (g_2^\delta-g_2)\|_{L^2(\partial K_j)}+ \|P_{X_{\bar H}} g_2-g_2\|_{L^2(\partial K_j)}\right)\\
    \le &\sqrt{\sum_{j=1}^{N_H}  \Big\|\frac{\partial \phi_H }{\partial n_j} \Big\|^2_{L^2(\partial K_j)}} \left(\sqrt{\sum_{j=1}^{N_H}  \|P_{X_{\bar H}} (g_2^\delta-g_2)\|^2_{L^2(\partial K_j)}}+\sqrt{\sum_{j=1}^{N_H}  \|P_{X_{\bar H}} g_2-g_2\|^2_{L^2(\partial K_j)}}\right)
    \\
    \le &\sqrt{\sum_{j=1}^{N_H} \Big\|\frac{\partial \phi_H }{\partial n_j}\Big\|^2_{L^2(\partial K_j)}} \left(\sqrt{ \sum_{i=1}^{N_{\bar H}}  \|P_{X_{\bar H}} (g_2^\delta-g_2)\|^2_{L^2(\partial \bar K_i)}}+\sqrt{\sum_{i=1}^{N_{\bar H}} \|P_{X_{\bar H}} g_2-g_2\|^2_{L^2(\partial \bar K_i)}}\right)
    \\\le& cH^{-\frac{1}{2}}\|\nabla \phi_H\|_{L^2\II}(\delta\bar H^{-\frac{1}{2}}  +\bar H^{\frac{3}{2}})\le  c(\delta \bar H^{-\frac{1}{2}} H^{-\frac{3}{2}}+\bar H^{\frac{3}{2}}H^{-\frac{3}{2}} )\| \phi_H\|_{L^2\II},
\end{align*}
where we use the trace inequalities (see e.g. \cite[Lemma A.3]{wang2014weak} and  \cite[Lemma B.1]{Monk1998}) to obtain that
\begin{align*}
&\sum_{j=1}^{N_H} \Big\|\frac{\partial \phi_H }{\partial n_j}\Big\|^2_{L^2(\partial K_j)} \le   
c H^{-1}\sum_{j=1}^{N_H} \|\nabla \phi_H\|^2_{L^2( K_j)}   
\le cH^{-1}\|\nabla \phi_H\|_{L^2\II}^2\le cH^{-3}\| \phi_H\|_{L^2\II}^2,\\
& \sum_{i=1}^{N_{\bar H}}  \|P_{X_{\bar H}} (g_2^\delta-g_2)\|^2_{L^2(\partial \bar K_i)}\le   c\bar H^{-1}\sum_{i=1}^{N_{\bar H}}  \|P_{X_{\bar H}} (g_2^\delta-g_2)\|^2_{L^2( \bar K_i)}\le c\bar H^{-1} \|P_{X_{\bar H}} (g_2^\delta-g_2)\|_{L^2\II}^2 \le c\delta^2 \bar H^{-1},\\
 &  \sum_{i=1}^{N_{\bar H}} \|P_{X_{\bar H}} g_2-g_2\|^2_{L^2(\partial \bar K_i)}
 \le  c\sum_{i=1}^{N_{\bar H}}\left(  \bar H^{-1}\|P_{X_{\bar H}} g_2-g_2\|^2_{L^2(\bar K_i)}+\bar H \|\nabla(P_{X_{\bar H}} g_2-g_2)\|^2_{L^2(\bar K_i)}\right)\le c\bar H^{3} \|g_2\|_{H^2\II}^2.
\end{align*}
This shows \eqref{eqn:app-g2} and hence completes the proof of this lemma.

\end{proof}

\begin{remark}\label{rem:reg-err}
Compared to the results in \cite[Lemma 4.10 and Theorem 4.12]{zhang2022identification}, Lemma \ref{lem:reg-err} offers an improved error estimate with reduced smoothness requirements for both the problem data and observational data. Additionally, the estimate in \cite[Lemma 4.10]{zhang2022identification} heavily depends on superconvergence results, which are only applicable to uniform meshes. This restrictive condition has been removed in the present work. The improvement is further demonstrated through numerical experiments; see Figure \ref{rateQP1_Conti} for a comparison of the two approaches.
\end{remark}

We now define the operator $K_{h,\tau}: \A \rightarrow \A$ such that
\begin{equation}\label{eqn:Kh}
 K_{h,\tau} q (x) :=  P_{[0,m]}\left(\frac{f(x)-\bar \partial_\tau^\alpha u_h^{N_2}(x;q,u_{\gamma,h}^{\delta,0}(q)) + \psi_H(x)  }{g_2^\delta(x)}\right),
\end{equation}
where the function $P_{[0,m]}:\mathbb{R} \rightarrow \mathbb{R}$ denotes a truncation function given in \eqref{eqn:P0M1}. The next lemma shows a contraction property of the operator $K_{h,\tau}$.

\begin{lemma}\label{lem:Dal-uhn-2}
Let $q_1,q_2 \in  \A$. Then for fixed $T_1$, $g_1$, $f$ and  $b$,   let $\bar T_0$ be the solution of  
\begin{align*}
   M^{-1} \bar  c_{0}\bar T_0^{-\al}(\bar  c_{1}(1+T_1^\al)+1)=\bar c,
\end{align*}
where $\bar c $ is some constant in $(0,1)$,  $\bar  c_{0}$ and $\bar  c_{1}$ are the constants given by Lemmas~\ref{lem:Dal-uhn}--\ref{lem:stab-0_reg_nois}.
It holds that any $T_2\ge \bar T_0$
$$\| K_{h,\tau} q_1 - K_{h,\tau} q_2 \|_{L^2\II} \le    \bar  c_{0}T_2^{-\al}(\bar  c_{1}(1+T_1^\al)+1)\|  q_1 -  q_2 \|_{L^2\II}\le \bar c\|  q_1 -  q_2 \|_{L^2\II}, $$
\end{lemma}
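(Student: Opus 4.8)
The plan is to follow the proof of the continuous contraction result Theorem~\ref{lem:contK} line by line, substituting the fully discrete operators for their continuous counterparts and the noisy datum $g_2^\delta$ for $g_2$. First I would use the non-expansivity of the truncation $P_{[0,m]}$, recorded in the proof of Theorem~\ref{lem:contK} as $|P_{[0,m]}(a)-P_{[0,m]}(b)|\le|a-b|$, to write pointwise
\begin{align*}
|K_{h,\tau}q_1-K_{h,\tau}q_2|\le\left|\frac{\bDal u_h^{N_2}(x;q_1,u_{\gamma,h}^{\delta,0}(q_1))-\bDal u_h^{N_2}(x;q_2,u_{\gamma,h}^{\delta,0}(q_2))}{g_2^\delta(x)}\right|,
\end{align*}
since the common terms $f(x)+\psi_H(x)$ cancel and only the difference of the discrete Caputo derivatives remains.

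Next I would bound the denominator below by $g_2^\delta\ge M$, which holds by Remark~\ref{rk:41} after the harmless truncation of the observational data described there. It then remains to estimate $\lnorm\bDal u_h^{N_2}(q_1,u_{\gamma,h}^{\delta,0}(q_1))-\bDal u_h^{N_2}(q_2,u_{\gamma,h}^{\delta,0}(q_2))\rnorm_{L^2\II}$. I would apply Lemma~\ref{lem:Dal-uhn} at the terminal index, where $t_{N_2}=T_2$, taking the two initial data $\bar v_i=u_{\gamma,h}^{\delta,0}(q_i)\in X_h^0$; this produces the factor $\bar c_0 T_2^{-\al}$ multiplying $\lnorm u_{\gamma,h}^{\delta,0}(q_1)-u_{\gamma,h}^{\delta,0}(q_2)\rnorm_{L^2\II}+\|q_1-q_2\|_{L^2\II}$. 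The difference of the reconstructed initial data is then controlled by Lemma~\ref{lem:stab-0_reg_nois}, giving $\bar c_1(1+T_1^\al)\|q_1-q_2\|_{L^2\II}$. Collecting the factors yields the contraction constant $M^{-1}\bar c_0 T_2^{-\al}(\bar c_1(1+T_1^\al)+1)$, which is strictly less than $\bar c<1$ whenever $T_2\ge\bar T_0$ by the definition of $\bar T_0$.

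The delicate point, and the step I expect to be the main obstacle, is the uniformity of the constant $\bar c_0$ furnished by Lemma~\ref{lem:Dal-uhn}: that constant is only asserted to depend on the initial datum $\bar v_2=u_{\gamma,h}^{\delta,0}(q_2)$, which itself depends on $q_2$ and on the parameters $\gamma,\delta,h,\tau$. The contraction argument closes only if these data stay bounded in the norms entering Lemma~\ref{lem:Dal-uhn} uniformly over $q_2\in\A$ and over the discretization and noise levels. I would secure this using the standing restriction $\gamma^{-1}(\delta+h^2)\le c$ together with the regularity estimates already established: by Lemma~\ref{lem:regugamma} the continuous regularized datum $u_\gamma(0;q)$ is bounded in $L^2\II$ independently of $q$ and $\gamma$, and by Lemmas~\ref{lem:esitugh} and~\ref{lem:uhn-err} the fully discrete datum $u_{\gamma,h}^{\delta,0}(q)$ differs from it by $O(\tau+\gamma^{-1}h^2+\gamma^{-1}\delta)$, hence remains uniformly bounded under the stated hypothesis. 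This ensures that $\bar c_0$ and $\bar c_1$, and therefore the final contraction factor, can be taken independent of $q_1,q_2$ and of $\gamma,\delta,h,\tau$, depending only on $T_1,T_2$ and $M$ as claimed.
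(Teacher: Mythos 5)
Your proposal is correct and follows essentially the same route as the paper: non-expansivity of $P_{[0,m]}$, the lower bound $g_2^\delta\ge M$ from Remark~\ref{rk:41}, Lemma~\ref{lem:Dal-uhn} at $t_{N_2}=T_2$ with initial data $u_{\gamma,h}^{\delta,0}(q_i)$, and Lemma~\ref{lem:stab-0_reg_nois} to absorb the difference of the reconstructed initial data. The uniformity concern you flag for $\bar c_0$ is legitimate and is resolved in the paper by the \textsl{a priori} bound $\|u_{\gamma,h}^{\delta,0}(q)\|_{L^2\II}\le c_{g_1,f,D}(1+T_1^\al)$ of Lemma~\ref{lem:pfopri} under the same hypothesis $\gamma^{-1}(\delta+h^2)\le c$; your alternative route through Lemmas~\ref{lem:regugamma}, \ref{lem:esitugh} and \ref{lem:uhn-err} reaches the same conclusion.
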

\begin{proof} 
Based on  Remark~\ref{rk:41} and  Lemmas~\ref{lem:Dal-uhn}--\ref{lem:stab-0_reg_nois}, similar as the proof in Theorem~\ref{lem:contK},  we can derive that 
\begin{align*}
\|K_{h,\tau} q_1 - K_{h,\tau} q_2\|_{L^2\II} &\le M^{-1} \lnorm \bar \partial_\tau^\alpha \left(u_h^{N_2}(q_2,u_{\gamma,h}^{\delta,0}(q_2)) - u_h^{N_2}(q_1,u_{\gamma,h}^{\delta,0}(q_1)) \right) \rnorm_{L^2\II}\\
&\le
M^{-1} \bar c _{0}T_2^{-\al}\left(\lnorm u_{\gamma,h}^{\delta,0}(q_2)-u_{\gamma,h}^{\delta,0}(q_1)\rnorm_{L^2\II}+\lnorm q_1 - q_2 \rnorm_{L^2\II}\right)\\
&\le M^{-1}  \bar c _{0}T_2^{-\al}( \bar c _{1}(1+T_1^\al)+1)\lnorm q_1 - q_2 \rnorm_{L^2\II}.
\end{align*}
This completes the proof of the lemma.
\end{proof}

Now we are ready to present the main theorem of this section.

\begin{theorem}\label{thm:err-fully}
Suppose that  Assumption   \ref{assump:numerics} is valid.
Let $K_{h,\tau}$ be the operator defined in \eqref{eqn:Kh}.  For fixed $T_1$, $g_1$, $f$ and  $b$,   let $\bar T_0$ defined in Lemma~\ref{lem:Dal-uhn-2}, then there holds
for any  $T_2\ge \bar T_0,\ q_0 \in  \A$, the iteration
\begin{align}\label{eqn:iter-fully}
 q_{n+1} = K_{h,\tau} q_n,\qquad \forall~~ n=0,1,\ldots,
\end{align}
linearly converges to a unique fixed point $q^* \in L^\infty\II$  of $K_{h,\tau}$  with $0\le q^*\le m$ s.t.
\begin{align*}
\|   q^* - q_{n+1}  \|_{L^2\II}
\le  cT_2^{{-\alpha}}
\| q^* - q_{n}  \|_{L^2\II}\qquad \text{for}~~ n\ge 0.  \end{align*}
Moreover, there holds  
\begin{align*}
\| q^* - q^\dag \|_{L^2\II} +  &\lnorm  U_{\gamma,h}^{\delta,0}(q^*)-v^\dag\rnorm_{L^2\II} \\ 
&\le  c\left(\gamma^{-1}\delta+\gamma^{\frac{s_1}{2}}+\gamma^{-1} h^2+ \tau +\delta \bar H^{-\frac{1}{2}} H^{-\frac{3}{2}}+\bar H^{\frac{3}{2}}H^{-\frac{3}{2}}+ H^{\min\{s_1,s_2\}}\right), 
\end{align*}
where \(q^\dag\) denotes the exact potential, \(v^\dag\) represents the exact initial condition, and \(U_{\gamma,h}^{\delta,0}(q^*) = u_{\gamma,h}^{\delta,0}(q^*) + D_h\). The constant \(c\) is independent of \(\tau\), \(h\), \(H\), \(\bar H\), and \(\delta\).

\end{theorem}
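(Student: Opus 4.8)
The plan is to obtain the first assertion directly from the Banach fixed-point theorem. By Lemma~\ref{lem:Dal-uhn-2}, for $T_2 \ge \bar T_0$ the map $K_{h,\tau}:\A\to\A$ is a contraction with modulus $\bar c\in(0,1)$, in fact with modulus $\le cT_2^{-\alpha}$; since the truncation $P_{[0,m]}$ forces the range of $K_{h,\tau}$ into $\{0\le q\le m\}$, the unique fixed point $q^*$ satisfies $0\le q^*\le m$, so $q^*\in L^\infty\II$. Writing $q^*-q_{n+1}=K_{h,\tau}q^*-K_{h,\tau}q_n$ and applying the contraction bound yields the linear convergence $\|q^*-q_{n+1}\|_{L^2\II}\le cT_2^{-\alpha}\|q^*-q_n\|_{L^2\II}$.

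For the error estimate, the key idea is to compare $q^*$ with the continuous fixed point $q^\dag$ of $K$ (Theorem~\ref{lem:contK}). First I would write $q^*-q^\dag=K_{h,\tau}q^*-Kq^\dag$, insert $K_{h,\tau}q^\dag$, and use the contraction to get $\|q^*-q^\dag\|_{L^2\II}\le(1-\bar c)^{-1}\|K_{h,\tau}q^\dag-Kq^\dag\|_{L^2\II}$, reducing everything to a consistency error. Using the $1$-Lipschitz property of $P_{[0,m]}$, the identity $\Dal U(T_2;q^\dag,v^\dag)=\Dal u(T_2;q^\dag,\bar v^\dag)$, the relation $q^\dag=(f-\Dal u(T_2;q^\dag,\bar v^\dag)+\Delta g_2)/g_2$ from \eqref{eq:backcontsolrepq}, and the crucial observation that the discrete forward solution started from $u_{\gamma,h}^{\delta,0}(q^\dag)$ coincides with the discrete regularized backward solution, $u_h^{N_2}(q^\dag,u_{\gamma,h}^{\delta,0}(q^\dag))=u_{\gamma,h}^{\delta,N_2}(q^\dag)$, I would split the difference of quotients as
\[
\frac{N_{h,\tau}}{g_2^\delta}-\frac{N}{g_2}=\frac{N_{h,\tau}-N}{g_2^\delta}+q^\dag\,\frac{g_2-g_2^\delta}{g_2^\delta},
\]
where $N=f-\Dal u(T_2;q^\dag,\bar v^\dag)+\Delta g_2$ and $N_{h,\tau}=f-\bDal u_{\gamma,h}^{\delta,N_2}(q^\dag)+\psi_H$. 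Since $g_2,g_2^\delta\ge M$ and $\|g_2-g_2^\delta\|_{L^2\II}\le\delta$, the second term is $\le (m/M)\delta$, while $\|N_{h,\tau}-N\|_{L^2\II}\le\|\bDal u_{\gamma,h}^{\delta,N_2}(q^\dag)-\Dal u(T_2;q^\dag,\bar v^\dag)\|_{L^2\II}+\|\psi_H-\Delta g_2\|_{L^2\II}$, the last term being controlled by Lemma~\ref{lem:reg-err}.

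The main obstacle is the term $\|\bDal u_{\gamma,h}^{\delta,N_2}(q^\dag)-\Dal u(T_2;q^\dag,\bar v^\dag)\|_{L^2\II}$, which I would estimate by telescoping through the continuous regularized solution $u_\gamma(\cdot;q^\dag)$ and the semidiscrete regularized solution $u_{\gamma,h}(\cdot;q^\dag)$. For the first gap, $u$ and $u_\gamma$ solve the same forward equation and differ only in their initial data, so $\Dal(u-u_\gamma)(T_2)=-A_{q^\dag}F_{q^\dag}(T_2)(\bar v^\dag-u_\gamma(0;q^\dag))$, and Lemma~\ref{lem:op} together with Lemma~\ref{lemma:gammacon} give $\le cT_2^{-\alpha}\gamma^{s_1/2}$; the second gap is $\le c\gamma^{-1}h^2T_2^{-\alpha}$ by Lemma~\ref{lem:uh-err}; and the third gap, carrying the temporal and noise contributions, is $\le c(\tau+\gamma^{-1}h^2+\gamma^{-1}\delta)$ by Lemma~\ref{lem:daluhn-err} (through Lemma~\ref{lem:uhn-err}). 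Collecting these with Lemma~\ref{lem:reg-err} yields
\[
\|K_{h,\tau}q^\dag-Kq^\dag\|_{L^2\II}\le c\big(\gamma^{-1}\delta+\gamma^{\frac{s_1}{2}}+\gamma^{-1}h^2+\tau+\delta\bar H^{-\frac12}H^{-\frac32}+\bar H^{\frac32}H^{-\frac32}+H^{\min\{s_1,s_2\}}\big),
\]
and hence the claimed bound for $\|q^*-q^\dag\|_{L^2\II}$.

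Finally, for the initial condition I would use $\|U_{\gamma,h}^{\delta,0}(q^*)-v^\dag\|_{L^2\II}\le\|u_{\gamma,h}^{\delta,0}(q^*)-\bar v^\dag\|_{L^2\II}+\|D_h-D\|_{L^2\II}$, the last term being $O(h^2)$ by the standard FEM estimate for $D$ and absorbed into $\gamma^{-1}h^2$. Then I would split $\|u_{\gamma,h}^{\delta,0}(q^*)-\bar v^\dag\|_{L^2\II}\le\|u_{\gamma,h}^{\delta,0}(q^*)-u_{\gamma,h}^{\delta,0}(q^\dag)\|_{L^2\II}+\|u_{\gamma,h}^{\delta,0}(q^\dag)-\bar v^\dag\|_{L^2\II}$; the first piece is $\le\bar c_1(1+T_1^\alpha)\|q^*-q^\dag\|_{L^2\II}$ by Lemma~\ref{lem:stab-0_reg_nois} (whose hypothesis $\gamma^{-1}(\delta+h^2)\le c$ holds under the stated scaling), and the second I would telescope as $u_{\gamma,h}^{\delta,0}(q^\dag)\to u_{\gamma,h}(0;q^\dag)\to u_\gamma(0;q^\dag)\to u(0;q^\dag)=\bar v^\dag$, invoking Lemmas~\ref{lem:uhn-err}, \ref{lem:esitugh} and~\ref{lemma:gammacon} to bound it by $c(\tau+\gamma^{-1}h^2+\gamma^{-1}\delta+\gamma^{\frac{s_1}{2}})$. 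Combining with the bound on $\|q^*-q^\dag\|_{L^2\II}$ completes the proof.
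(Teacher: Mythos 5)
Your proposal is correct and follows essentially the same route as the paper: Banach fixed point via Lemma~\ref{lem:Dal-uhn-2} for the first assertion, then a consistency-plus-contraction argument for the error, with the same key lemmas (Lemmas~\ref{lemma:gammacon}, \ref{lem:esitugh}, \ref{lem:uh-err}, \ref{lem:uhn-err}, \ref{lem:daluhn-err}, \ref{lem:reg-err}, \ref{lem:stab-0_reg_nois}) controlling the same error contributions. The only cosmetic difference is that you package the comparison of the two fixed points as the standard bound $\|q^*-q^\dag\|\le(1-\bar c)^{-1}\|K_{h,\tau}q^\dag-Kq^\dag\|$, whereas the paper splits $q^\dag-q^*$ into five terms ${\rm J}_1,\dots,{\rm J}_5$ and absorbs the contraction term ${\rm J}_5$ into the left-hand side explicitly; these are equivalent.
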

\begin{proof}
    Choosing an arbitrary random initial  $q_0 \in \A$, the contraction mapping theorem and Lemma \ref{lem:Dal-uhn-2} (with   time $T_2\ge  \bar  T_0$) imply that
the iteration \eqref{eqn:iter-fully} generates a Cauchy sequence $\{q_n\}_{n=1}^\infty$ in $L^2\II$ sense. Therefore the sequence $\{ q_n \}$ converges to a fixed point of  $K_{h,\tau}$  as $n\rightarrow\infty$, denoted by
$q^*\in L^2\II$. Then the use of the box restriction $P_{[0,m]}$ indicates  $0 \le q^* \le m$.

Next, we show the error estimate between $q^*$ and $q^\dag$, $u_{\gamma,h}^{\delta,0}(q^*)$ and $\bar  v^\dag$ .  It holds that
\begin{align*}
\| q^\dag - q^* \|_{L^2\II} 
\le& \lnorm\frac{f - \partial_t^\alpha u( T_2;q^\dag, \bar v^\dag) + \Delta g_2}{g_2}  - \frac{f - \partial_t^\alpha u(T_2;q^\dag, \bar v^\dag) + \Delta g_2}{g_2^\delta}\rnorm_{L^2\II}\\
& + \lnorm\frac{f  + \Delta g_2}{g_2^\delta} - \frac{f +\psi_H  }{g_2^\delta}\rnorm_{L^2\II}+ \lnorm \frac{\partial_t^\alpha u(T_2;q^\dag, \bar  v^\dag) }{g_2^\delta} - \frac{\partial_t^\alpha u(T_2;q^\dag, u_\gamma(q^\dag,0)) }{g_2^\delta}\rnorm_{L^2\II}\\
&+
 \lnorm \frac{\partial_t^\alpha u(T_2;q^\dag, u_\gamma(q^\dag,0)) }{g_2^\delta} - \frac{\bar \partial_\tau^\alpha u_h^{N_2}(q^\dag,u_{\gamma,h}^{\delta,0}(q^\dag)) }{g_2^\delta}\rnorm_{L^2\II}\\
 &+ \lnorm \frac{\bar \partial_\tau^\alpha u_h^{N_2}(q^\dag,u_{\gamma,h}^{\delta,0}(q^\dag))}{g_2^\delta}- \frac{\bar \partial_\tau^\alpha u_h^{N_2}(q^*,u_{\gamma,h}^{\delta,0}(q^*)) }{g_2^\delta}\rnorm_{L^2\II}
 =: \sum_{i=1}^5 {\rm J}_i. 
\end{align*}
Due to the fact that 
 $f(x)-\partial_t^\alpha u(t;q^\dag, \bar  v^\dag) +\Delta g_2= q^\dag g_2\in L^\infty\II$, it is straightforward to see that the first term is bounded by
$$ {\rm J_1}\le c\|g_2^\delta-g_2\|_{L^2\II}\le c \delta .$$
Meanwhile, Lemma~\ref{lem:reg-err} implies the second term bounded by 
$${\rm J_2} \le  c \Big(\delta \bar H^{-\frac{1}{2}} H^{-\frac{3}{2}}+\bar H^{\frac{3}{2}}H^{-\frac{3}{2}}+ H^{\min\{s_1,s_2\}}\Big).$$
Moreover, it gives from Remark~\ref{rk:41}, Lemma~\ref{lem:stab-0} and Lemma~\ref{lemma:gammacon}  that 
 \begin{align*}
 {\rm J_3}&\le M^{-1} \tilde c_0T_2^{-\alpha}\| \bar  v^\dag-u_\gamma(q^\dag,0)\|_{L^2\II}\le c\gamma^{\frac {s_1}{2}}.
\end{align*}
 Remark~\ref{rk:41}, Lemma~\ref{lem:uh-err} and Lemma~\ref{lem:daluhn-err} indicate 
\begin{align*}
    {\rm J_4}\le  &\,M^{-1}\lnorm \Dal \big(u(T_2;q^\dag, u_\gamma(q^\dag,0))-u_h(T_2;q^\dag, u_\gamma(q^\dag,0))\big)\rnorm_{L^2\II} \\&+M^{-1}\lnorm \Dal u_h(T_2;q^\dag, u_\gamma(q^\dag,0))- \bDal u_h^{N_2}(q^\dag,u_{\gamma,h}^{\delta,0}(q^\dag)) \rnorm_{L^2\II}\\
    \le &\,c (\gamma^{-1} h^2+ \tau + \gamma^{-1}\delta).
\end{align*}
Applying Lemma~\ref{lem:Dal-uhn-2} to $  {\rm J_5}$ gives 
\begin{align*}
    {\rm J_5} \le M^{-1}  \bar c _{0}T_2^{-\al}( \bar c _{1}(1+T_1^\al)+1)\| q^\dag - q^* \|_{L^2\II}.
\end{align*}
Therefore, if $T_2\ge  \bar  T_0$,    it holds that 
\begin{align*}
    \| q^\dag - q^* \|_{L^2\II}\le c\left(\gamma^{-1}\delta+\gamma^{\frac{s_1}{2}}+\gamma^{-1} h^2+ \tau +\delta \bar H^{-\frac{1}{2}} H^{-\frac{3}{2}}+\bar H^{\frac{3}{2}}H^{-\frac{3}{2}}+ H^{\min\{s_1,s_2\}}\right).
\end{align*}
Next we turn to the estimate of $u_{\gamma,h}^{\delta,0}(q^*)-\bar  v^\dag$. Using Lemma~\ref{lemma:gammacon},  Lemma~\ref{lem:esitugh}, Lemma~\ref{lem:uhn-err} and Lemma~\ref{lem:stab-0_reg_nois}  leads to
\begin{align*}
   \lnorm u_{\gamma,h}^{\delta,0}(q^*)-\bar  v^\dag\rnorm_{L^2\II}&\le \lnorm u_{\gamma,h}^{\delta,0}(q^*)-u_{\gamma,h}^{\delta,0}(q^\dag)\rnorm_{L^2\II}+\lnorm u_{\gamma,h}^{\delta,0}(q^\dag)-\bar v^\dag\rnorm_{L^2\II}\\
  & \le\tilde c_{1,\tau}(1+T_1^{\al})\lnorm q^*-q^\dag\rnorm_{L^2\II}+ c(\gamma^{-1}\delta+\gamma^{-1} h^2+\tau+\gamma^{\frac{s_1}{2}})\\
  &\le c\left(\gamma^{-1}\delta+\gamma^{\frac{s_1}{2}}+\gamma^{-1} h^2+ \tau +\delta \bar H^{-\frac{1}{2}} H^{-\frac{3}{2}}+\bar H^{\frac{3}{2}}H^{-\frac{3}{2}}+ H^{\min\{s_1,s_2\}}\right).
\end{align*}
 We then complete the proof with the relations that $U_{\gamma,h}^{\delta,0}(q^*)=u_{\gamma,h}^{\delta,0}(q^*)+D_h$ and $ v^\dag=\bar  v^\dag+D$.
\end{proof} 

\begin{remark}\label{rem:iter}
The contraction property of \(K_{h,\tau}\), established in Theorem~\ref{thm:err-fully}, naturally motivates the development of an iterative algorithm to compute \(q^*\) and 
\(U_{\gamma,h}^{\delta,0}(q^*) = u_{\gamma,h}^{\delta,0}(q^*) + D_h\) as defined in \eqref{eqn:Kh}. 
Each iteration requires solving a linear backward problem, which can be efficiently handled using the conjugate gradient method 
\cite{zhang2020numerical,zhang2023stability}. 
The complete procedure is summarized in Algorithm~\ref{alg}. 
The contraction property from Theorem~\ref{thm:err-fully} guarantees the linear convergence of the iterative scheme.

\end{remark}

\begin{algorithm}
\SetKwInOut{Input}{input}\SetKwInOut{Output}{output}
\caption{Recover the potential $q^*$  and  initial condition $U_{\gamma,h}^{\delta,0}(q^*)$ from $g_1^\delta$ and $g_2^\delta$. }\label{alg}
 \KwData{Order $\alpha$, terminal time $T_1$ and $T_2$, source term $f$, boundary data $b$,
 noisy observation $g_1^\delta$, $g_2^\delta$ upper bound constant $m$, parameters \(\gamma\), \(h\), \(H\), \(\bar H\) and \(\tau\);}
\KwResult{ Approximate  potential $q^*$ and initial condition $ U_{\gamma,h}^{\delta,0}(q^*)$.}
Compute $\psi_H$ by \eqref{eqn:psih} and  initialize $0\le q_{0}\le m$ for random $q_{0}$, set $e^0=1$, $k=0$\;
\While{$e^k>\text{tol}=10^{-10}$}{Compute  $u_{\gamma,h}^{\delta,0}(q_k)$ from \eqref{solu:ugamdel_fully} using the Conjugate Gradient method\; Set $u_{\gamma,h}^{\delta,0}(q_k)=\max(u_{\gamma,h}^{\delta,0}(q_k),0)$ and Compute  $u_{\gamma,h}^{\delta,n}(q_k)$ from  \eqref{eqn:udisdeelayh} with potential $q_k$\;
Update the potential by
$$
q_{k+1}= K_{h,\tau} q_k :=  P_{[0,m]}\left(\frac{f-\bar \partial_\tau^\alpha u_h^{N_2}(q_k,u_{\gamma,h}^{\delta,0}(q_k)) + \psi_H  }{g_2^\delta} \right);$$\\
Compute  error
$$
e^{k+1} = \|q_{k+1} - q_k\|_{L^2\II};
$$\\
$k\leftarrow k+1$\;
}
$q^*\leftarrow q_k$, $ U_{\gamma,h}^{\delta,0}(q^*)\leftarrow  u_{\gamma,h}^{\delta,0}(q_k)+D_h$\;
\Output{The approximated potential $q^*$ and the initial condition $ U_{\gamma,h}^{\delta,0}(q^*)$.}
\end{algorithm}


\begin{remark}\label{pra_rmk}
The error estimate presented in Theorem \ref{thm:err-fully} offers valuable guidance for selecting the regularization and discretization parameters \(\gamma\), \(h\), \(H\), \(\bar H\) and \(\tau\) based on the known noise level \(\delta\). For instance, the choice that 
\[
\gamma \sim \delta^\frac{2}{2+s_1}, \quad h \sim \delta^{\frac{1}{2}},  \quad H \sim \delta^\frac{3}{6+4\min\{s_1,s_2\}}, \quad \bar{H} \sim \delta^{\frac{1}{2}}, \quad \tau \sim \delta^\frac{s_1}{2+s_1}
\]
results in the optimal convergence rate of \(O(\delta^\frac{3\min\{s_1,s_2\}}{6+4\min\{s_1,s_2\}})\). This finding is well-supported by our numerical results shown in Figure \ref{rateP_1_ritz} of Section \ref{sec:num}.
\end{remark}


\vskip20pt
\section{Numerical Experiments}\label{sec:num}
In this section, we present some two-dimensional numerical tests to illustrate our theoretical analysis. We consider the two-dimensional subdiffusion model  \eqref{eqn:pdeo} with the square domain \(\Omega = (0,3)^2\). 
To compute the exact solutions \(U(T_1)\) and \(U(T_2)\) as reference data, we solve the direct problem on very fine meshes. 
Noisy data \(g_1^\delta\) and \(g_2^\delta\) are then generated as:
\begin{align*}
   g_1^\delta(x) = U(x,T_1) + \delta \zeta(x)\sup_{x\in\Omega} U(x,T_1), \quad g_2^\delta(x) = U(x,T_2) + \delta \zeta(x)\sup_{x\in\Omega} U(x,T_2),
\end{align*}
where \(\zeta\) is generated from  the standard Gaussian distribution, and $\delta$ represents the associated noise level. Then we employ Algorithm \ref{alg} to compute the numerical reconstructions of the potential \(q^*\) and the initial condition \(U_{\gamma,h}^{\delta,0}(q^*)\). All simulationas are performed on a desktop computer using MATLAB 2023.
The quasi-uniform meshes using the optimal delaunay triangulations \cite{ChenXu2004} are generated using the IFEM package \cite{Chen:2008ifem}.

In the experiments, the source and boundary data are chosen as
\begin{equation*}
     f(x,y) = 10, \quad b(x,y) = \frac{x(3-x)}{4} + 1.
\end{equation*}
We consider following three exact potential functions:
\begin{itemize}
    \item[(i)] Smooth potential in $H^2(\Omega)$:
    \[
    q_1^\dagger(x, y) = 3 - \cos(\pi x) \cos(\pi y).
    \]
    \item[(ii)] Piecewise smooth potential in $H^{\frac32-\epsilon}(\Omega)$, defined as a pyramid-shaped function:
    \[
    q_2^\dagger(x, y) = 3 + 1.5 \times (-1)^{j+k} \psi(x - j, y - k), \quad (x, y) \in [j, j+1] \times [k, k+1], \quad j, k = 0, 1, 2,
    \]
    where \(\psi(x, y)\) is defined by:
    \[
    \psi(x, y) = \begin{cases}
      2y, & x \ge y, \, x + y > 1, \, y < 0.5, \\
      2x, & x < y, \, x + y \leq 1, \, x < 0.5, \\
      2(1 - y), & x < y, \, x + y > 1, \, y > 0.5, \\
      2(1 - x), & x \ge y, \, x + y \ge 1, \, x \ge 0.5.
    \end{cases}
    \]
    \item[(iii)] Discontinuous potential in $H^{\frac12-\epsilon}(\Omega)$, defined as a step function where
    \[
    q_3^\dagger(x, y) = 3 + (-1)^{j+k}, \quad (x, y) \in [j, j+1] \times [k, k+1], \quad j, k = 0, 1, 2.
    \]
\end{itemize}

We also consider the following three initial conditions:  
\begin{itemize}
    \item[(i)] Smooth initial condition in $H^2(\Omega)$:
    \begin{equation*}
        v_1^\dag(x,y) = x(3-x)\left(\frac{1}{4} + \frac{y(3-y)}{2}\right)  + \exp\left(\frac{q_1^\dag(x,y)}{4}\right)\sin(\pi x)^2 \sin(\pi y)^2+ 1.
    \end{equation*}
    \item[(ii)] Piecewise smooth initial condition in $H^{\frac32-\epsilon}(\Omega)$:
    \[
    v_2^\dag(x,y) = x(3-x)\left(\frac{1}{4} + \frac{y(3-y)}{2}\right) + q_2^\dag(x,y) - 2.
    \]
    \item[(ii)] Discontinuous initial condition in $H^{\frac12-\epsilon}(\Omega)$: 
    \[
     v_2^\dag(x,y) = x(3-x)\left(\frac{1}{4} + \frac{y(3-y)}{2}\right) + q_3^\dag(x,y).
    \]
\end{itemize}

We shall test the numerical  recovery of the pairs of initial condition  and   potential  \((v_i^\dag, q_j^\dag)\) for \(i, j = 1, 2, 3\), with all choices satisfying Assumption \ref{assump:numerics}.
Given the known noise level \( \delta \), we select the discretization parameters based on Remark \ref{pra_rmk} with default values:
for $(v_1^\dag,q_1^\dag)$
\begin{align*}
 \gamma = \delta^{\frac{1}{2}}/7.25, \quad h =\frac{3}{\lceil1.25\delta^{-\frac12}\rceil}, \quad H = \frac{3}{\lceil2.85\delta^{-\frac{3}{14}}\rceil}, \quad \bar H=\frac{3}{\lceil2.85\delta^{-\frac{3}{14}}\rceil\rceil\lceil\delta^{-\frac27}\rceil}, \quad \tau = \frac{T_2}{\lceil15\delta^{-\frac12}T_2\rceil};
\end{align*}
for $(v_1^\dag, q_2^\dag)$, 
\begin{align*}
 \gamma = \delta^{\frac{1}{2}}/5.75,  \quad h =\frac{3}{\lceil1.25\delta^{-\frac12}\rceil}, \quad H = \frac{3}{\lceil2\delta^{-\frac14}\rceil}, \quad \bar H=\frac{3}{\lceil2\delta^{-\frac14}\rceil\lceil1.25\delta^{-\frac14}\rceil}, \quad \tau = \frac{T_2}{\lceil15\delta^{-\frac12}T_2\rceil};
\end{align*}
 for $(v_2^\dag, q_1^\dag)$, $(v_2^\dag, q_2^\dag)$ 
\begin{align*}
 \gamma = \delta^{\frac{4}{7}}/1.85, \quad h =\frac{3}{\lceil1.25\delta^{-\frac12}\rceil}, \quad H = \frac{3}{\lceil2\delta^{-\frac14}\rceil}, \quad \bar H=\frac{3}{\lceil2\delta^{-\frac14}\rceil\lceil1.25\delta^{-\frac14}\rceil}, \quad \tau = \frac{T_2}{\lceil15\delta^{-\frac37}T_2\rceil};
\end{align*}
 for $(v_2^\dag,q_3^\dag)$
\begin{align*}
 \gamma = \delta^{\frac{4}{7}}/1.85,  \quad h =\frac{3}{\lceil1.15\delta^{-\frac12}\rceil}, \quad H = \frac{3}{\lceil1.65\delta^{-\frac38}\rceil}, \quad \bar H=\frac{3}{\lceil1.65\delta^{-\frac38}\rceil\rceil\lceil1.75\delta^{-\frac18}\rceil},  \quad \tau = \frac{T_2}{\lceil10\delta^{-\frac37}T_2\rceil};
\end{align*}
and for $(v_3^\dag,q_3^\dag)$
\begin{align*}
 \gamma = \delta^{\frac{4}{5}}/.95, \quad h =\frac{3}{\lceil1.15\delta^{-\frac12}\rceil}, \quad H = \frac{3}{\lceil1.65\delta^{-\frac38}\rceil}, \quad \bar H=\frac{3}{\lceil1.65\delta^{-\frac38}\rceil\rceil\lceil1.75\delta^{-\frac18}\rceil}, \quad \tau = \frac{T_2}{\lceil17.5\delta^{-\frac15}T_2\rceil}.
\end{align*}

We first examine the convergence rate of the numerical reconstruction. Figure~\ref{rateP_1_ritz} shows the relative errors \( e_v \) and \( e_q \) against \( \delta \) for terminal times \( T_1 = 0.1 \), \( T_2 = 1 \), and different values of \( \alpha \). Here, \( e_v \) and \( e_q \) are defined as:
\begin{equation*}\label{eqn:eqv}
e_v = \| U_{\gamma,h}^{\delta,0}(q^*) - v^\dag \|_{L^2\II}/\|v^\dag\|_{L^2\II}, \quad e_q = \|q^* - q^\dag \|_{L^2\II}/\|q^\dag\|_{L^2\II}.
\end{equation*}
The numerical results show that, for all pairs \( (v_i^\dagger, q_j^\dagger) \) with \( i,j = 1,2,3 \), the convergence rates are \(O(\delta^\frac{3\min\{s_1,s_2\}}{6+4\min\{s_1,s_2\}})\), which is consistent with the theoretical finding in Theorem~\ref{thm:err-fully}.
\begin{figure}[H]
\vspace{0.35in}
		\begin{tabular}{ccc}
		\centering
\includegraphics[trim={0.63in 0.15in 0.5in 0.5in},clip,width=0.25\textwidth]{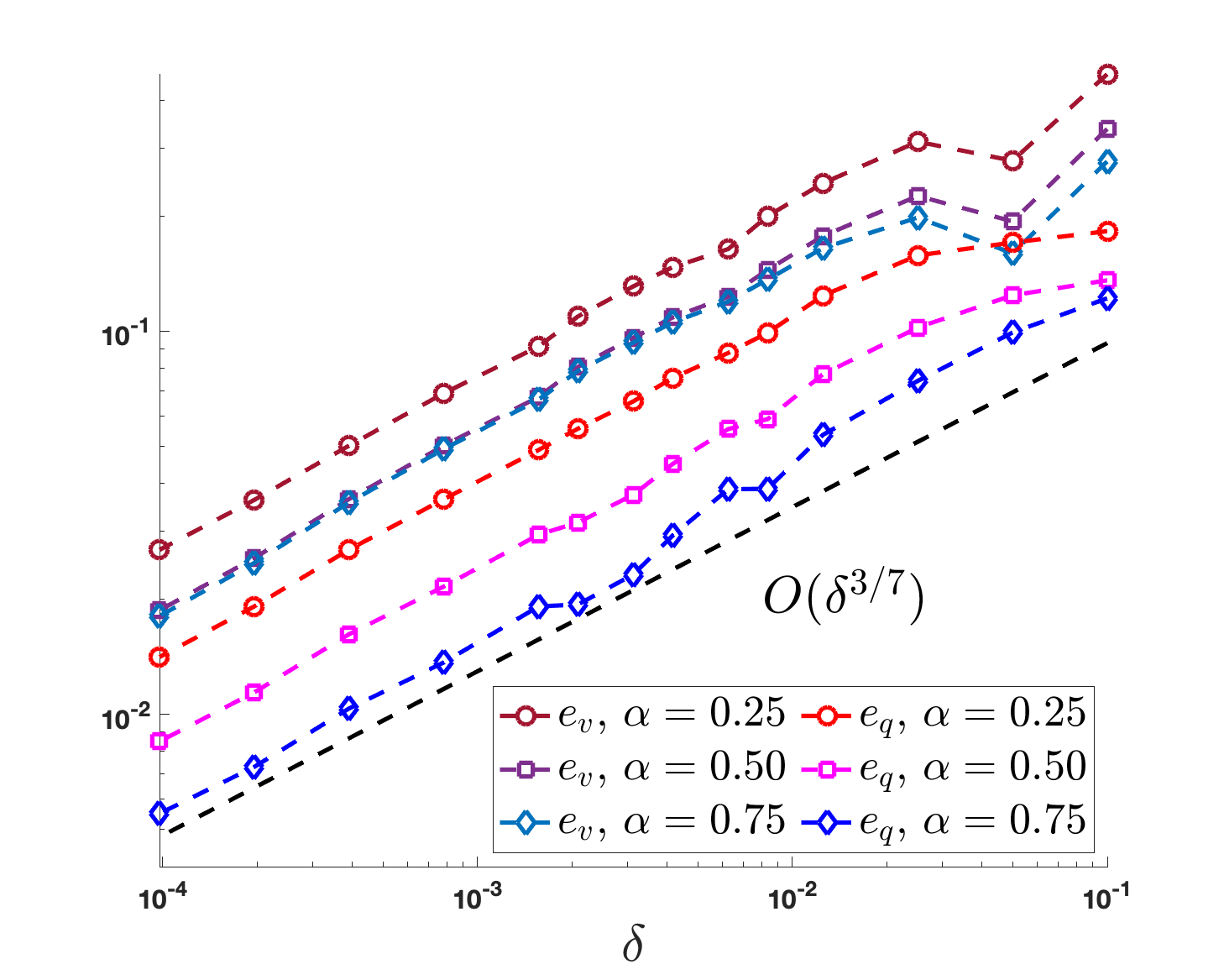}&
	\includegraphics[trim={0.63in 0.15in 0.5in 0.5in},clip,width=0.25\textwidth]{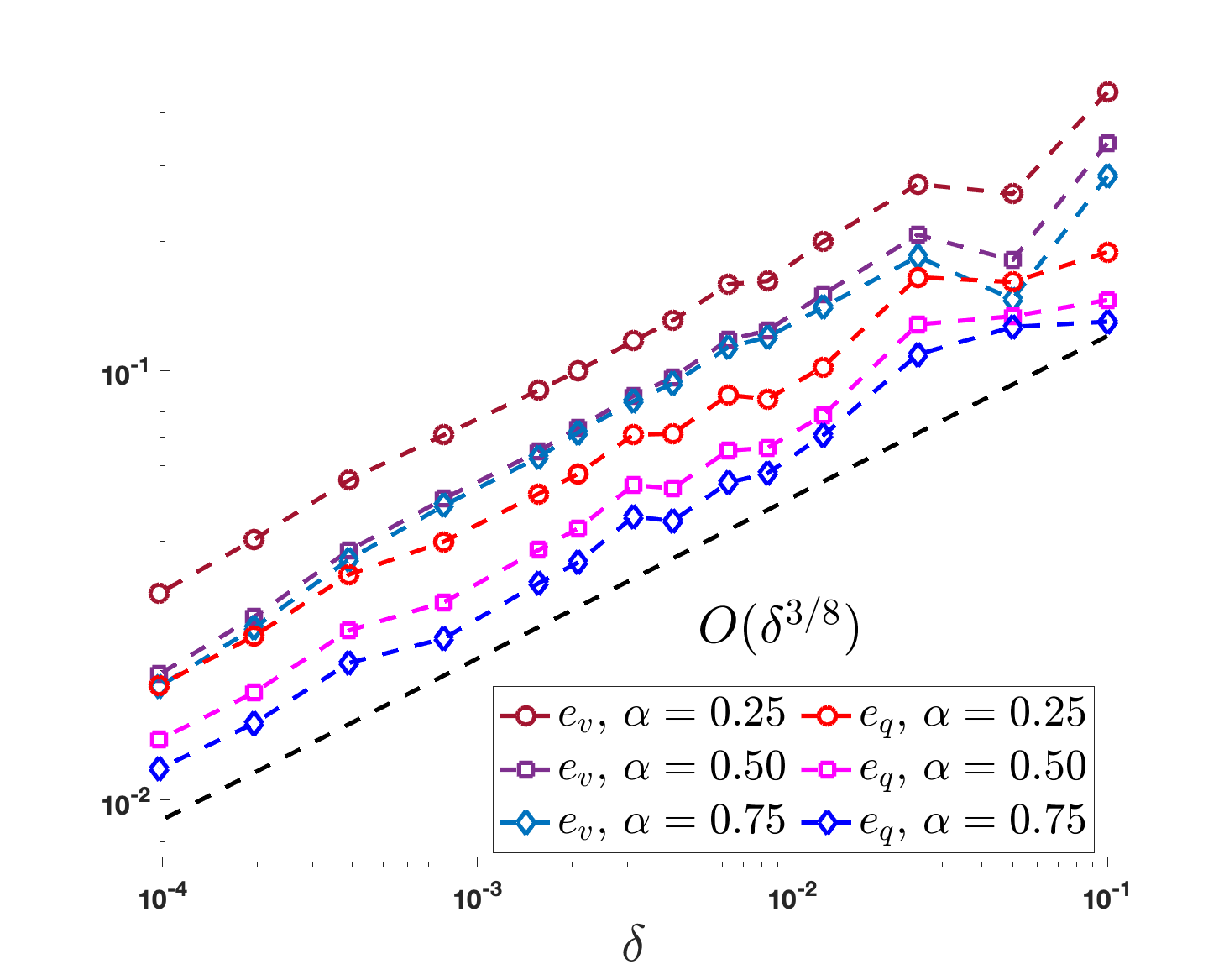}&
\includegraphics[trim={0.63in 0.15in 0.5in 0.5in},clip,width=0.25\textwidth]{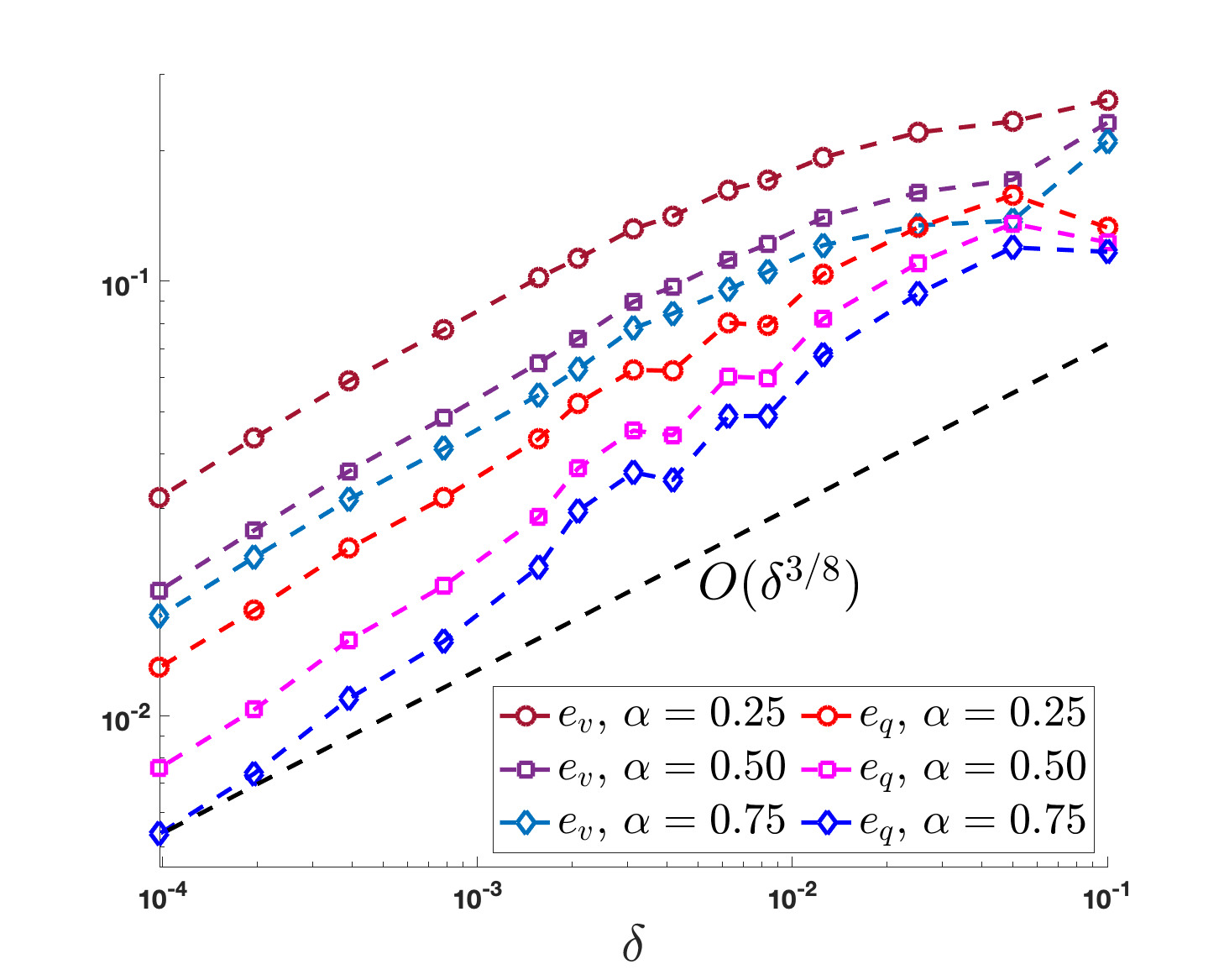}\\
  Rate for  $(v_1^\dag,q_1^\dag)$ &    Rate for  $(v_1^\dag,q_2^\dag)$ &
   Rate for  $(v_2^\dag,q_1^\dag)$\\
   \includegraphics[trim={0.63in 0.15in 0.5in 0.5in},clip,width=0.25\textwidth]{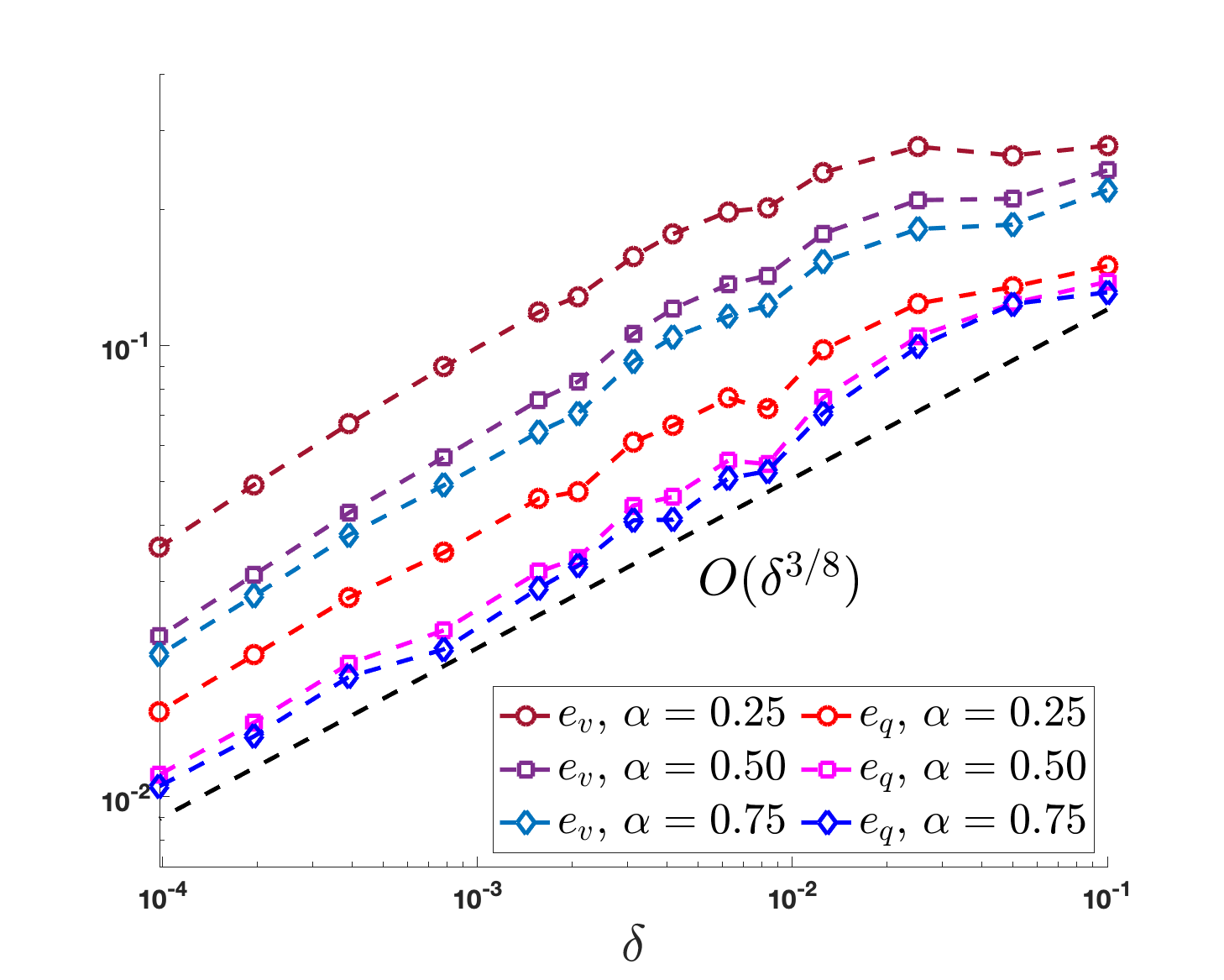}&
	\includegraphics[trim={0.63in 0.15in 0.5in 0.5in},clip,width=0.25\textwidth]{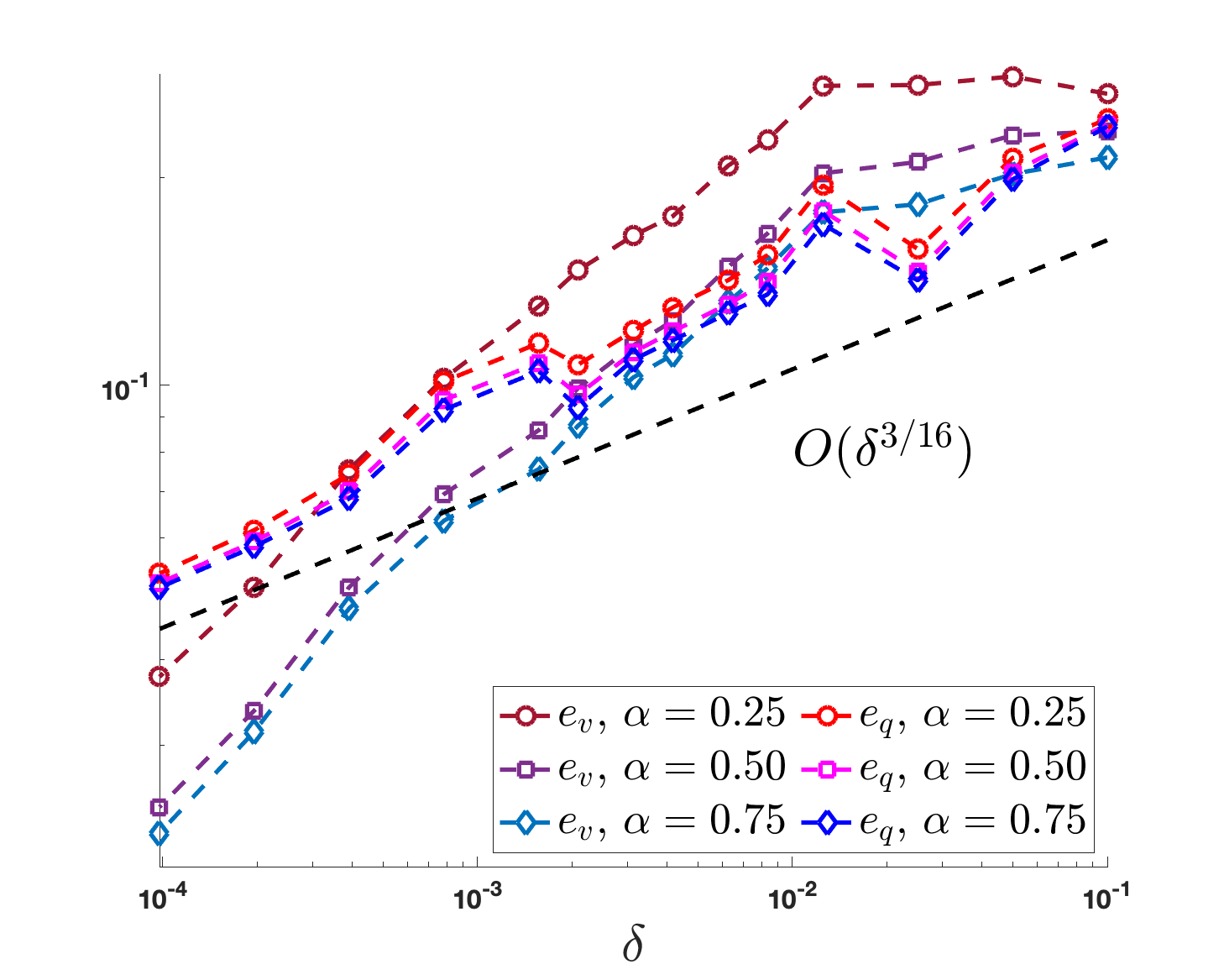}&
\includegraphics[trim={0.63in 0.15in 0.5in 0.5in},clip,width=0.25\textwidth]{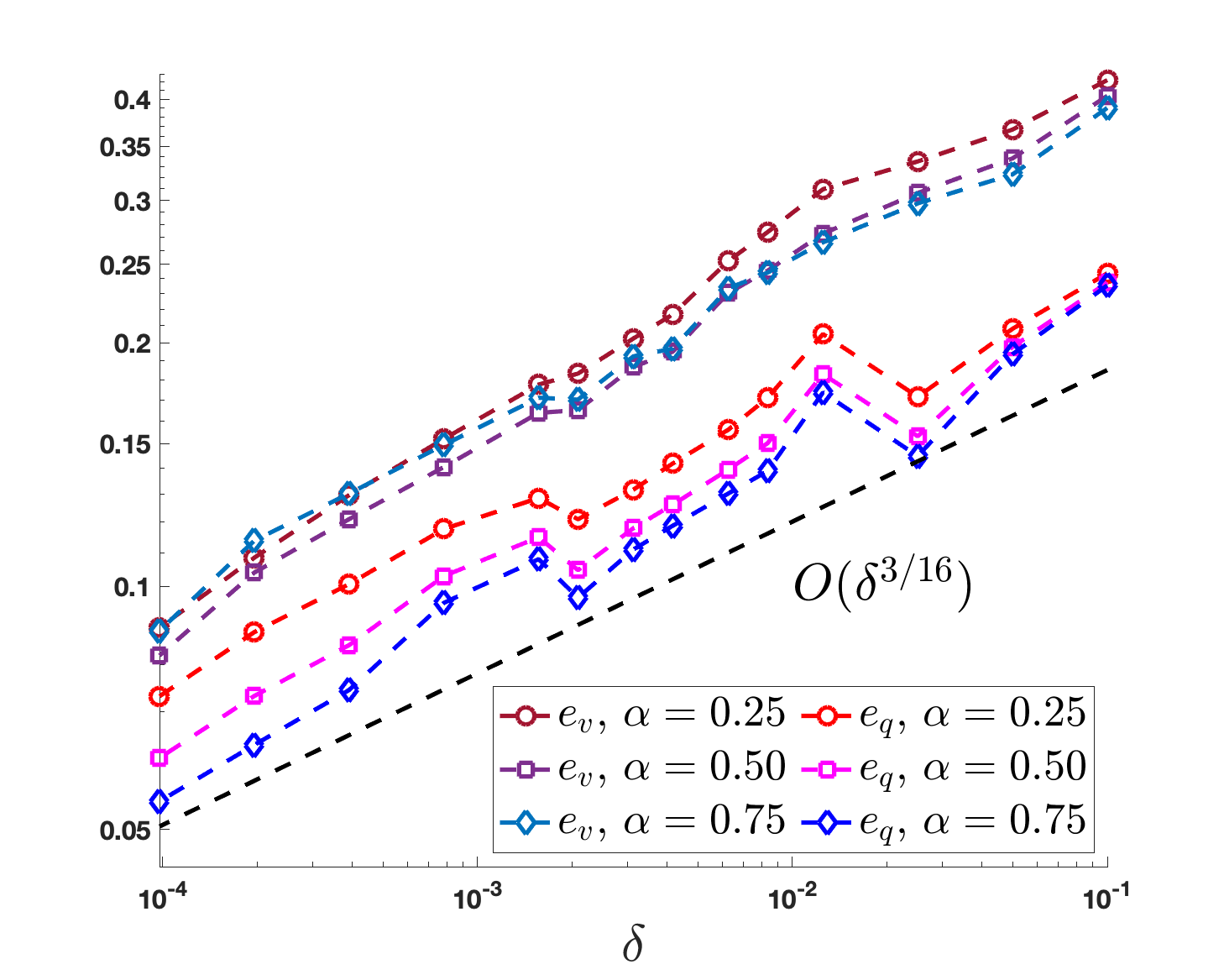}\\
  Rate for  $(v_2^\dag,q_2^\dag)$ &    Rate for  $(v_2^\dag,q_3^\dag)$ &
   Rate for  $(v_3^\dag,q_3^\dag)$
	\end{tabular}
\caption{Convergence rates of \( e_v \) and \( e_q \) for recovering \( (v_i^\dag, q_j^\dag) \) against \( \delta \).}
\label{rateP_1_ritz}
\end{figure}

Next, we replace $P_{X_{\bar H}}$ in equation \eqref{eqn:psih} with Lagrange interpolation, following the approach outlined in \cite[eq. (4.21)]{zhang2022identification} to approximate $\Delta g_2$. When the noise is uniformly bounded, satisfying  
$\|g_2^\delta - g_2\|_{C(\overline{\Omega})} = \delta$,  
the optimal recovery rate for the potential $q$ is $O(\delta^{1/3})$, as demonstrated in Figure \ref{rateQP1_Conti}(a).  
However, for non-uniform meshes, no clear convergence rate can be observed, as illustrated in Figure \ref{rateQP1_Conti}(b). This lack of convergence arises because the optimal error estimate in this scheme relies heavily on the superconvergence property of P1 element interpolation (see \cite[Lemma 4.10]{zhang2022identification}), which holds only for uniform meshes.  
These results highlight the significant improvement achieved by the current method.

 \begin{figure}[H]
\vspace{0.35in}
		\begin{tabular}{ccc}
		\centering
  \includegraphics[trim={0.5in 0.15in 0.6in 0.35in},clip,width=0.3\textwidth]{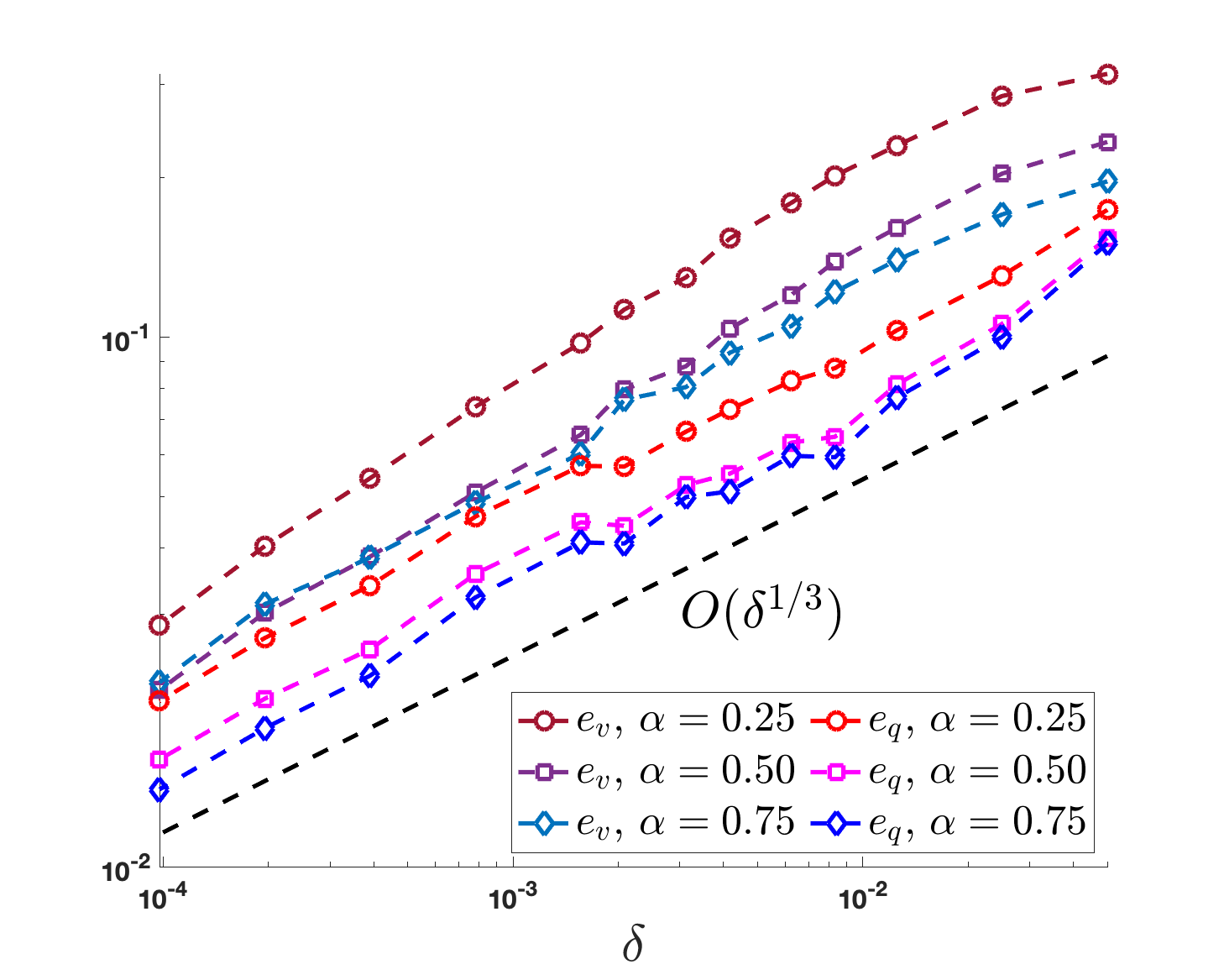}&
	\includegraphics[trim={0.5in 0.15in 0.6in 0.35in},clip,width=0.3\textwidth]
    {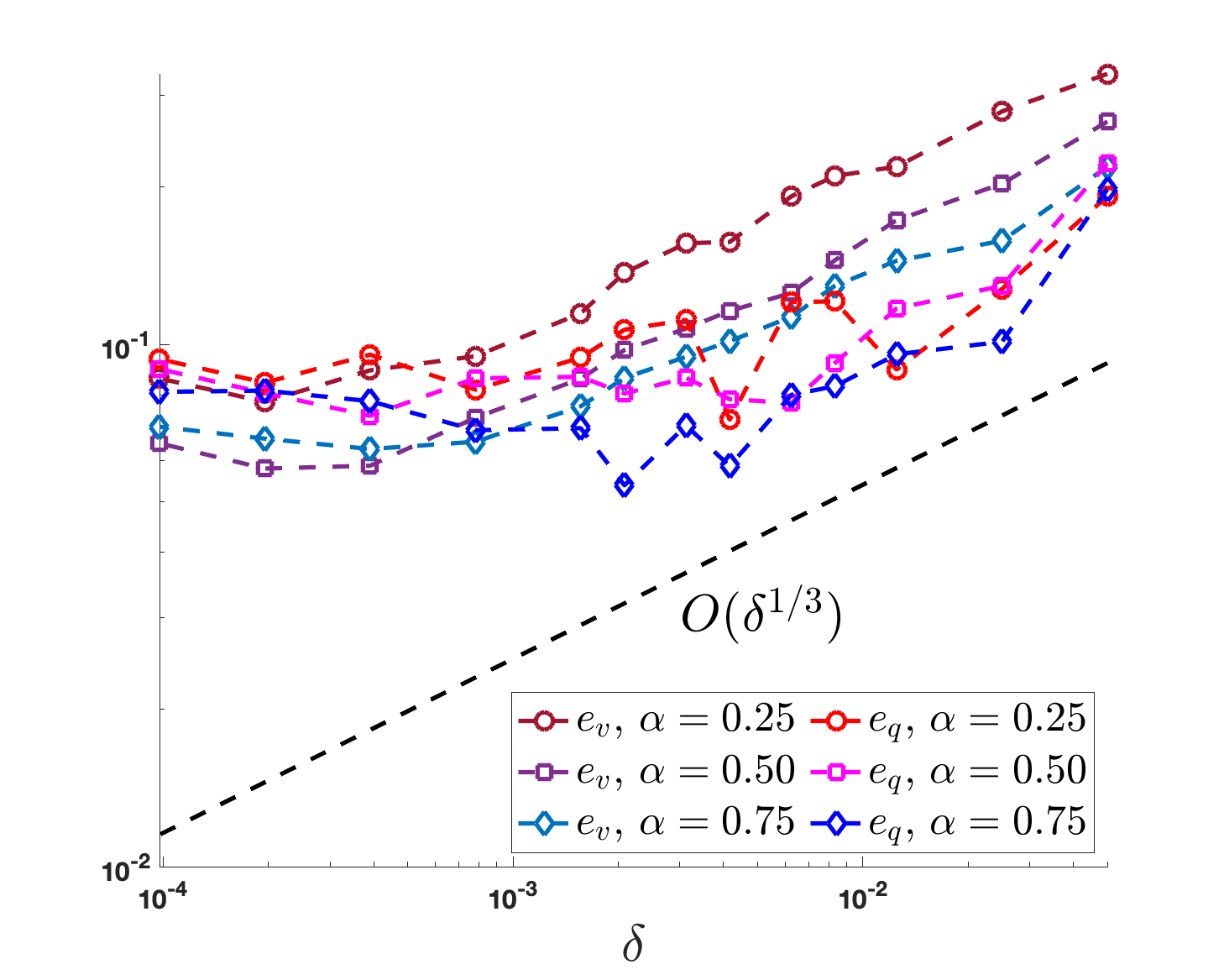}\\
 (a)  uniform meshes &  (b)   non-uniform meshes
	\end{tabular}
\caption{Convergence rates for recovering \( (v_2^\dag, q_2^\dag) \) using the scheme   \cite[(4.21)]{zhang2022identification}.}\label{rateQP1_Conti} 
\end{figure}

In addition, we explore the convergence of the iterative scheme and the effect of parameters \( T_1 \) and \( T_2 \). Let \( q_k \) represent the solution after the \( k \)-th iteration of Algorithm~\ref{alg}, with the relative error defined by:
\[
e_k = \|q_k - q^\dag\|_{L^2\II}/\|q^\dag\|_{L^2\II}\qquad \text{for all} \quad k \ge 0.
\]
Figures~\ref{con_iter} (a) and (b) display convergence histories for fixed \( T_1 = 0.1 \) with varying \( \alpha \) or \( T_2 \) values, demonstrating linear convergence with an increased rate as \( T_2 \) grows. Furthermore, Figure~\ref{con_iter} (c) shows convergence behavior across different \( T_1 \) and \( T_2 \) values with \( \alpha = 0.5 \), indicating that smaller \( T_1 \) values necessitate proportionally smaller \( T_2 \) for effective iterative convergence.


Finally, we examine the profile for both small and large values of \( T_2 \) with a fixed \( T_1 \). In Figure~\ref{Profile:diver}, we display the numerical reconstructions for \( T_2 = 0.01 \) and \( T_2 = 1 \), with \( T_1 = 0.1,\ \al=0.5,\  \tau =0.0025\). In this case, we set the tolerance \( \text{tol} = 10^{-8} \). The numerical results suggest that Algorithm~\ref{alg} provides a highly accurate reconstruction when \( T_2 = 1 \), but the reconstruction becomes inaccurate for smaller values of \( T_2 \). This finding underscores the importance of the assumption in Theorems~\ref{thm:cond-stab} and \ref{thm:err-fully} that the terminal time \( T_2 \) should be sufficiently large.

\begin{figure}
\vspace{0.35in}
 \centering
	\begin{tabular}{ccc}
	\includegraphics[trim={.5in 0.5in 0.85in 0.5in},clip,width=0.25\textwidth]{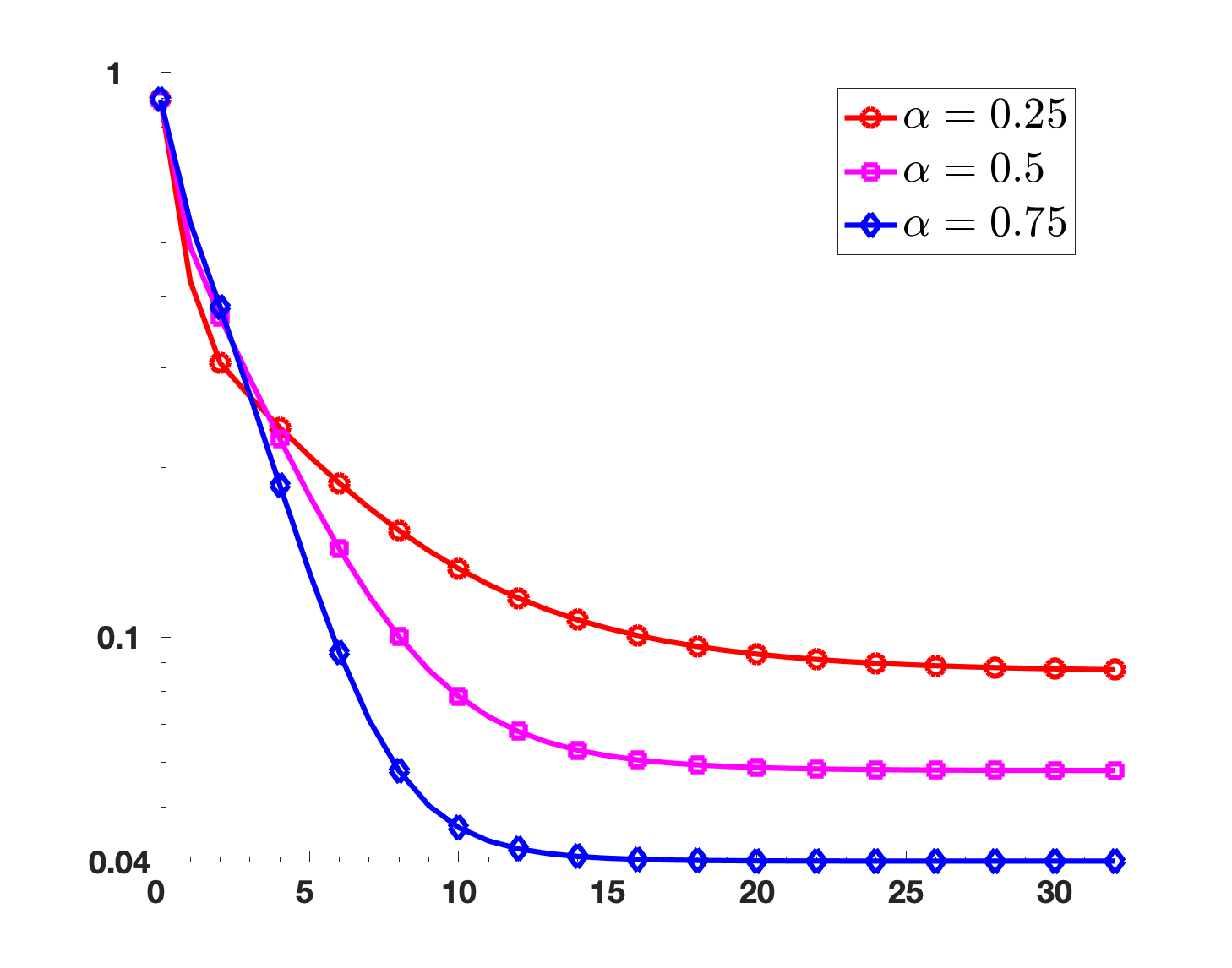}&
	\includegraphics[trim={.5in 0.5in 0.85in 0.5in},clip,width=0.25\textwidth]{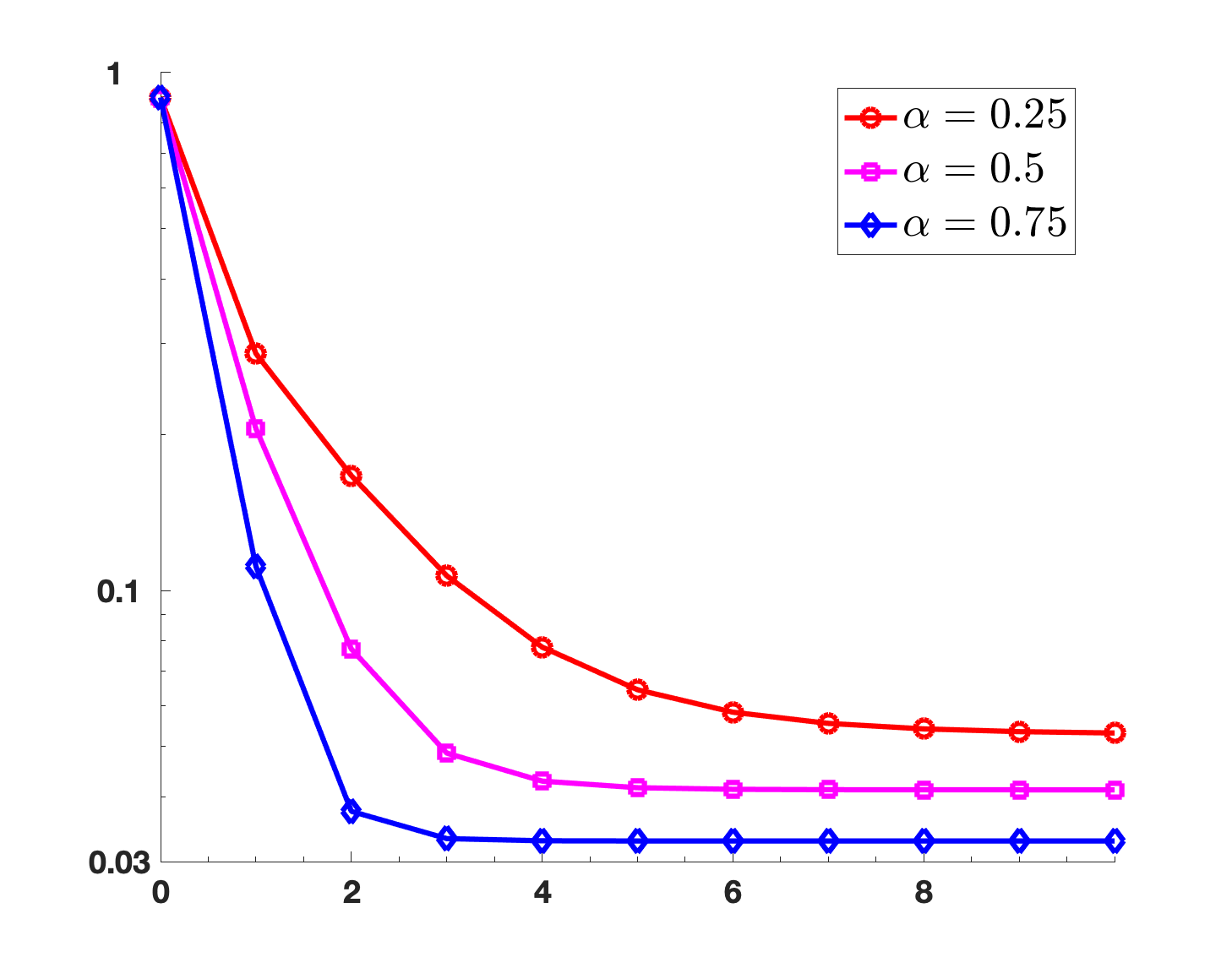}&
\includegraphics[trim={.5in 0.5in 0.85in 0.5in},clip,width=0.25\textwidth]{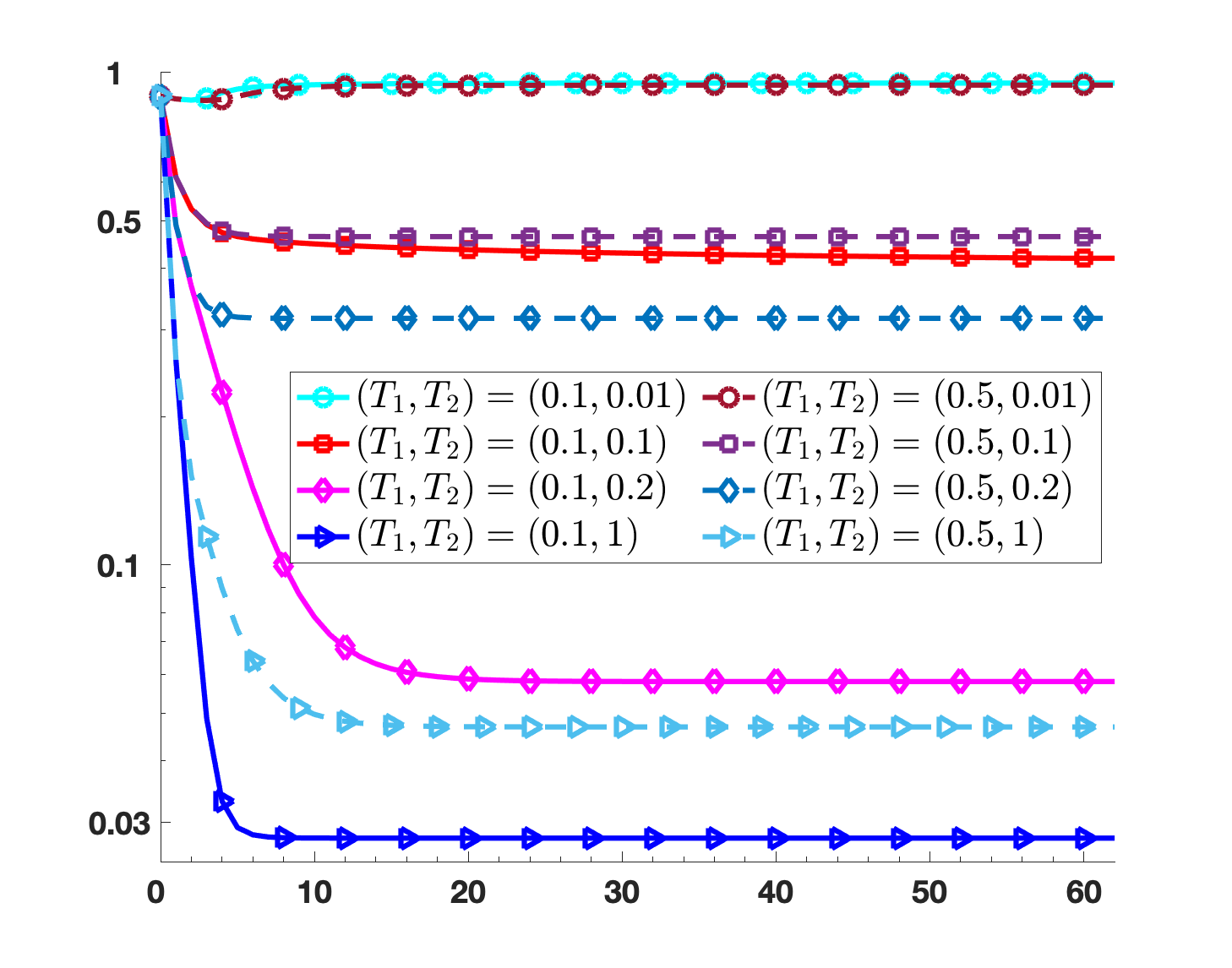}\\
		 (a) $T_1=0.1$, $T_2=0.2$&  (b)  $T_1=0.1$, $T_2=2$ &  (c) $\al=0.5$
	\end{tabular}
\caption{Convergence histories of Algorithm~\ref{alg}  to recover $( v^\dag_2,q^\dag_2)$ for  with different $\alpha$ or different $T_1$, $T_2$, where $\delta=10^{-3}$,     $\tau=0.005$.  } \label{con_iter}
\end{figure}

\begin{figure}
\vspace{0.35in}
 \centering
	\begin{tabular}{ccc}
    \includegraphics[trim={.1in 0.1in 0.6in 0.2in},clip,width=0.25\textwidth]{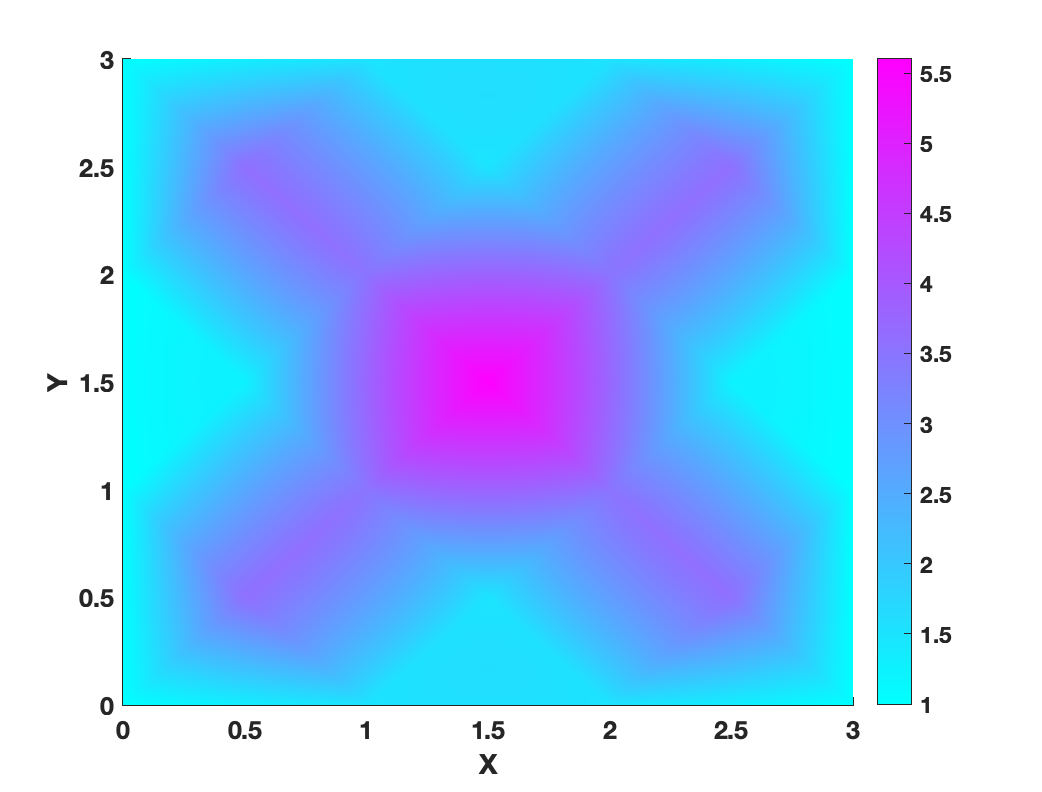}&
	  \includegraphics[trim={.1in 0.1in 0.6in 0.2in},clip,width=0.25\textwidth]{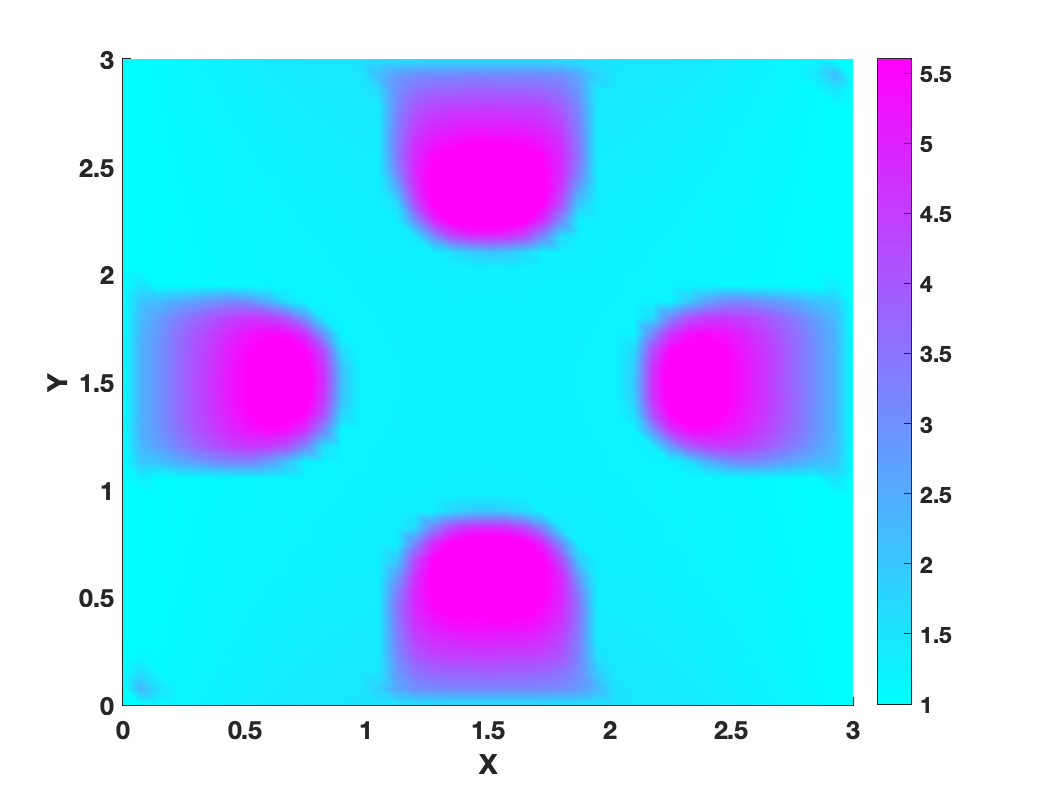}&
  \includegraphics[trim={.1in 0.1in 0.6in 0.2in},clip,width=0.25\textwidth]{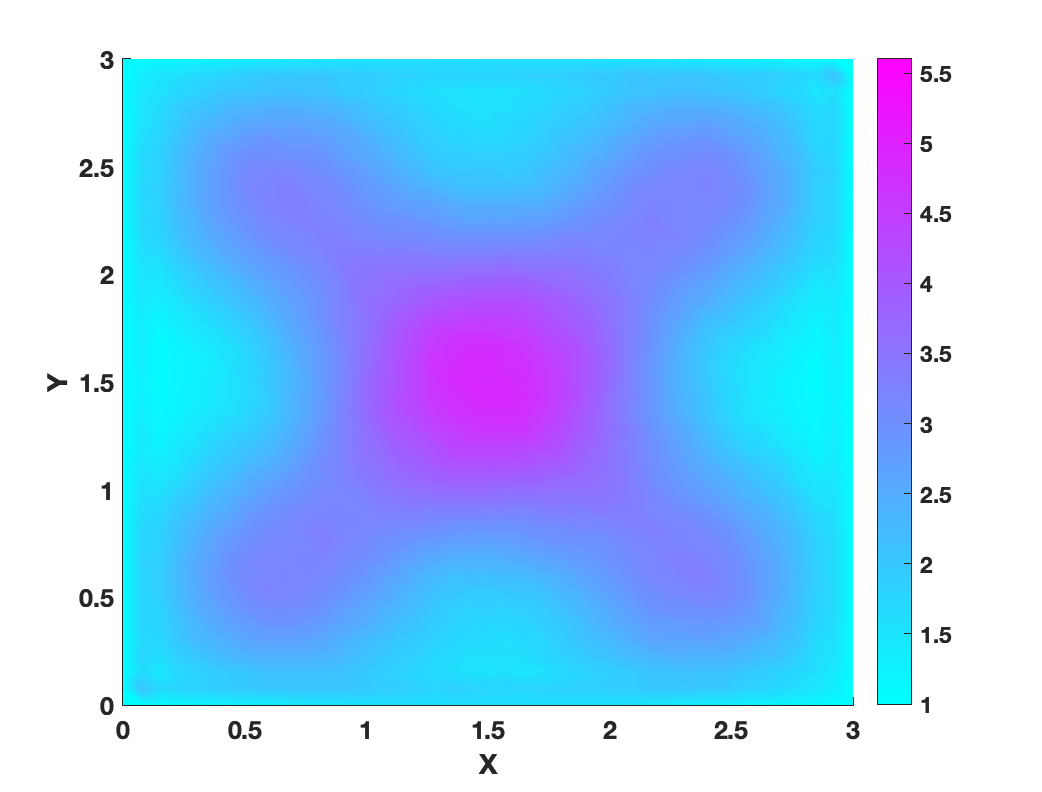}\\
	  \includegraphics[trim={.1in 0.1in 0.6in 0.2in},clip,width=0.25\textwidth]{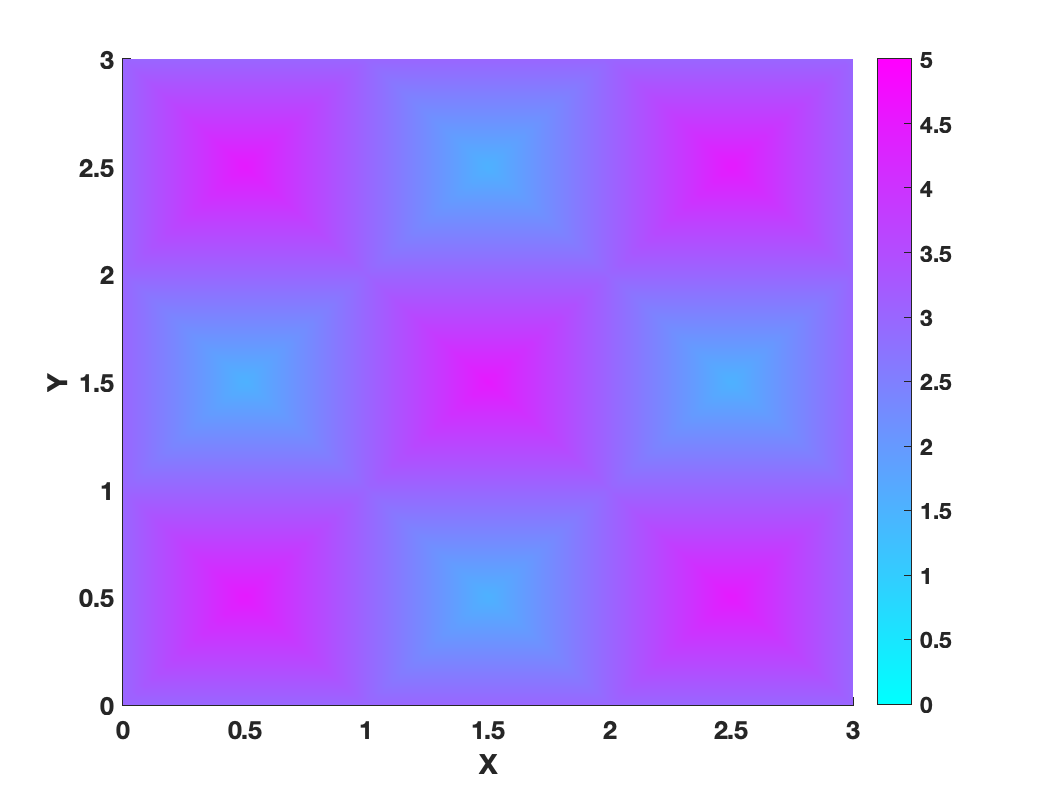}&
	  \includegraphics[trim={.1in 0.1in 0.6in 0.2in},clip,width=0.25\textwidth]{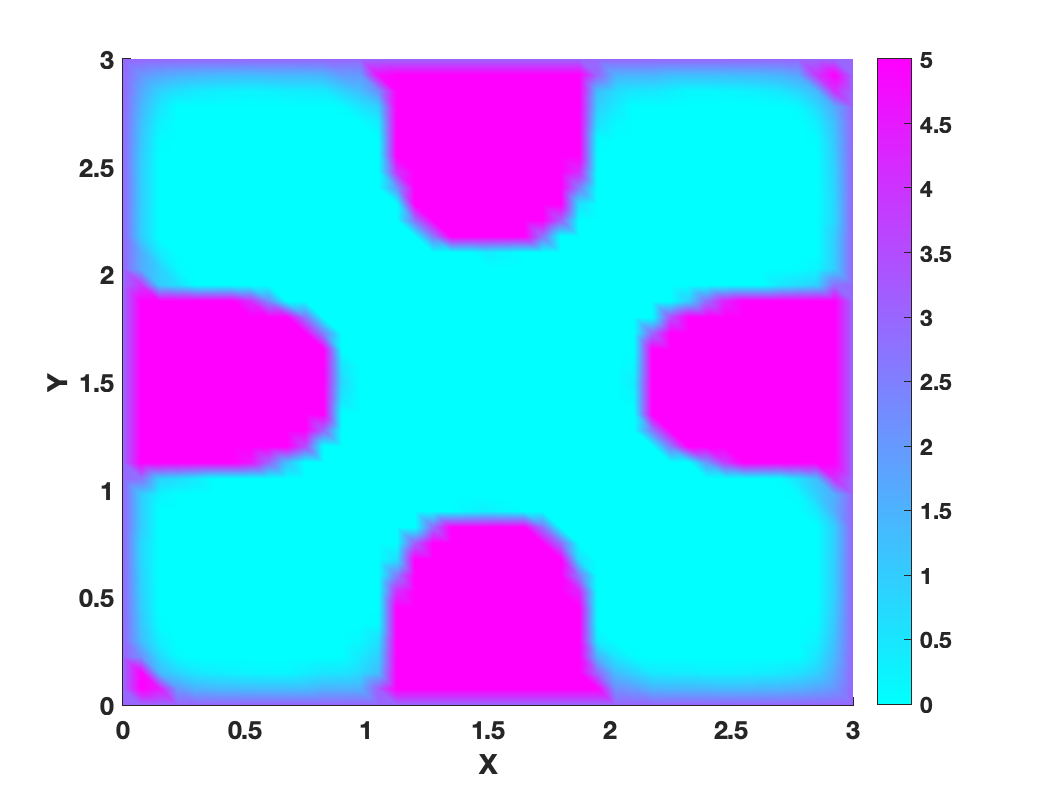}&
  \includegraphics[trim={.1in 0.1in 0.6in 0.2in},clip,width=0.25\textwidth]{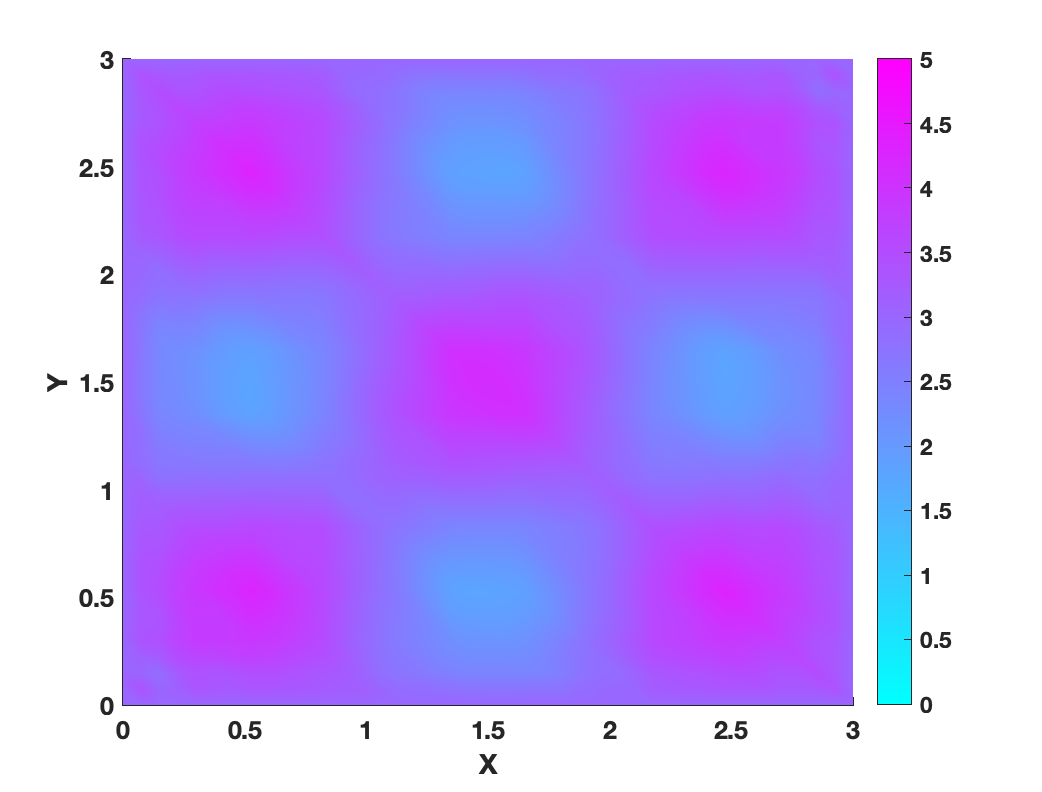}\\
		 (a) Exact $(v_2^\dag,q_2^\dag)$&  (b)  $T_1=0.1$, $T_2=0.01$ &  (c) $T_1=0.1$, $T_2=1$ 
	\end{tabular}
\caption{Profiles of numerical reconstruction with $\al=0.5$, $\tau =0.0025$, $\delta=10^{-3}$. (a) exact initial condition and potential $(v_2^\dag,q_2^\dag)$; (b)  353 iterations, $\|q^{353}-q^{352}\|_{L^2\II}\le 10^{-8}$; (c)  21 iterations, $\|q^{21}-q^{20}\|_{L^2\II}\le 10^{-8}$. } \label{Profile:diver}
\end{figure}

\section{Conclusion}\label{sec:conclusion}
In this paper, we investigated the inverse problem of simultaneously identifying a spatially dependent potential and the initial condition in a subdiffusion model using two terminal observations. The existence, uniqueness, and conditional stability of the solutions were established under weak regularity assumptions by employing the contraction mapping theorem. To address the mildly ill-posed nature of recovering the initial condition, the quasi-boundary value method was employed as a regularization technique.
A fully discrete numerical scheme was proposed, combining the finite element method for spatial discretization and convolution quadrature for temporal discretization. The iterative reconstruction of the initial condition and potential was achieved through a fixed-point algorithm, whose linear convergence was rigorously proven. Additionally, an \textit{a priori} error estimate was derived, offering practical guidance for selecting regularization parameters and discretization mesh sizes based on the noise level.
The theoretical results were grounded in novel error estimates for the direct and backward problems, as well as the established conditional stability. Numerical experiments were conducted to validate the theoretical findings, demonstrating the effectiveness and robustness of the proposed method.
For future work, we aim to explore more challenging scenarios involving boundary measurements or sparse internal measurements.

\section*{Acknowledgements}
The work of J. Yang is supported by the National Science Foundation of China (No.12271240, 12426312), the fund of the Guangdong Provincial Key Laboratory of Computational Science and Material Design, China (No.2019B030301001), and the Shenzhen Natural Science Fund (RCJC 20210609103819018).
The work of Z. Zhou is supported by by National Natural Science Foundation of China (Projects 12422117 and 12426312), Hong Kong Research Grants Council (15303122) and an internal grant of Hong Kong Polytechnic University (Project ID: P0038888, Work Programme: ZVX3). The work of Z. Zhou and X. Wu is also supported by the CAS AMSS-PolyU Joint Laboratory of Applied Mathematics.

\section*{Appendix}\label{apped}
\noindent\textbf{A. Proof of Lemma \ref{lem:Dal-uhn}} 
 \begin{proof}
    Let   $\omega_h^n= u_h^n(q_1,\bar v_1)- u_h^n(q_2,\bar v_2)$.   Then with $\bar \omega^n=(q_2-q_1)( u_h^n(q_2,\bar v_2)+D_h)$, $\omega_h^n\in X_h^0$ solves
    \begin{equation*}
\begin{aligned}
   \bDal \omega_h^n+A_{h}(q_1) \omega_h^n&=P_h  \bar \omega^n\quad \text{with}\quad \omega_h^0=\bar v_1-\bar v_2.
\end{aligned}
\end{equation*}
By means of Laplace transform, 
the fully discrete solution $\omega_h^n$ can be written as 
\begin{equation}\label{eq:repomega-fully}
  \omega_h^n=F_{h,\tau}^n(q_1)\omega_h^0+\tau \sum_{j=1}^n E_{h,\tau}^{n-j}(q_1)P_h\bar \omega^j.
\end{equation}
The governing equation of  $w_h^n$ and the identity $A_h(q)E_{h,\tau}^{n}(q) = - \partial_\tau F_{h,\tau}^{n+1}(q)$ lead to
\begin{align*}
     \bDal \omega_h^n&=-A_h(q_1) \left(F_{h,\tau}^n(q_1)\omega_h(0)+\tau \sum_{j=1}^n E_{h,\tau}^{n-j}(q_1)P_h\bar \omega^j\right)+P_h\bar \omega^n
   =-A_h(q_1)F_{h,\tau}^n(q_1) \omega_h(0)+\partial_\tau \phi_h^n,
\end{align*}
where  $\phi_h^n=\tau \sum_{j=1}^n F_{h,\tau}^{n+1-j}(q_1)P_h\bar \omega^j$.
The first term can be bounded by  Lemma~\ref{lem:op-fully}
\begin{align*}\left\|A_h(q_1)F_{h,\tau}^n(q_1)\omega_h(0)\right\|_{L^2\II}\le ct_n^{-\al}\|\omega_h(0)\|_{L^2\II}\le ct_n^{-\al}\|\bar v_1-\bar v_2\|_{L^2\II}.
\end{align*}
We now  analyze the bound of $(A_h(q_1)F_{h,q_1}(t_n)) ^{-1} \partial_\tau \phi_h^n $.
Lemmas \ref{lem:op-fully}--\ref{Lem:sobembdis} imply
   \begin{align*}
&\quad\left\| (A_h(q_1)F_{h,q_1}(t_n)) ^{-1} \phi_h^n \right\|_{L^2\II}\\&\le \tau \sum_{j=1}^n \left\|F_{h,q_1}(t_n)^{-1}F_{h,\tau}^{n+1-j}(q_1)A_h(q_1)^{-1}P_h\bar \omega^j\right\|_{L^2\II}\\
&\le \tau \sum_{j=1}^n \left\|F_{h,q_1}(t_n)^{-1}F_{h,\tau}^{n+1-j}(q_1)\right\|\left\|A_h(q_1)^{-1}P_h\bar \omega^j\right\|_{L^2\II}\\
&\le  c\|  q_1-q_2 \|_{L^2\II}\tau \sum_{j=1}^n\left( 1+t_{n+1-j}^{-\al}t_n^{\al} \right) \left \|   u_h^j(q_2,\bar v_2)+D_h\right \|_{L^2\II}
\le \bar c_{0,\bar v_2,f,D} t_n \|  q_1-q_2 \|_{L^2\II} .
\end{align*}     
Next, the identity
$t_{n+1} \phi_h^n =\tau \sum_{j=1}^n t_{n+1-j}F_{h,\tau}^{n+1-j}(q_1)(q_1)P_h\bar\omega^j +\tau \sum_{j=1}^n F_{h,\tau}^{j}(q_1)P_h[t_{n+1-j}\bar\omega^{n+1-j}]$ gives
\begin{align*}
   \partial_\tau ( t_{n+1} \phi_h^n)= &\tau \sum_{j=1}^n \partial_\tau \left(t_{n+1-j}F_{h,\tau}^{n+1-j}(q_1)\right)P_h\bar\omega^j+ \tau \sum_{j=1}^n  F_{h,\tau}^{j}(q_1)  P_h\partial_\tau(t_{n+1-j}\bar\omega^{n+1-j})\\= &\tau \sum_{j=1}^n   \left(t_{n-j} \partial_\tau F_{h,\tau}^{n+1-j}(q_1)+F_{h,\tau}^{n+1-j}(q_1)\right)P_h\bar\omega^j+ \tau \sum_{j=1}^n F_{h,\tau}^{j}(q_1)  P_h  \left[t_{n-j}\partial_\tau\bar\omega^{n+1-j}+ \bar\omega^{n+1-j}\right]\\=:&{\rm I}_{h,\tau}^1+ {\rm I}_{h,\tau}^2.
\end{align*}
We shall derive the bounds of $(A_h(q_1)F_{h,q_1}(t_n))^{-1}{\rm I}_{h,\tau}^1$ and $(A_h(q_1)F_{h,q_1}(t_n))^{-1}{\rm I}_{h,\tau}^2 $. First, we bound the term $(A_h(q_1)F_{h,\tau}^n(q_1))^{-1}{\rm I_{h,\tau}^1}$ by Lemmas \ref{lem:op-fully}--\ref{Lem:sobembdis}
\begin{align*}
    &\left\|(A_h(q_1)F_{h,q_1}(t_n))^{-1}{\rm I_{h,\tau}^1}\right\|_{L^2\II}\\
    \le& \tau \sum_{j=1}^n \left \|F_{h,q_1}(t_{n})^{-1} \left(t_{n-j} \partial_\tau F_{h,\tau}^{n+1-j}(q_1)+F_{h,\tau}^{n+1-j}(q_1)\right)\right\|\left\|A_h(q_1)^{-1}P_h\bar\omega^j\right\|_{L^2\II}
       \\\le& c\|q_2-q_1\|_{L^2\II}\tau \sum_{j=1}^n \left(1+t_{n+1-j}^{-\al}t_n^{\al}\right)\left \|   u_h^j(q_2,\bar v_2)+D_h\right \|_{L^2\II}\le\bar c_{0,\bar v_2,f,D}t_n\|q_2-q_1\|_{L^2\II}.
\end{align*}
Meanwhile, applying Lemmas \ref{lem:op-fully}--\ref{Lem:sobembdis} to the term $(A_h(q_1)F_{h,q_1}(t_{n}))^{-1}{\rm I}_{h,\tau}^2$  leads to
\begin{align*}
    &\left\|(A_h(q_1)F_{h,q_1}(t_n))^{-1}{\rm I}_{h,\tau}^2\right\|_{L^2\II}\\
    \le& \tau \sum_{j=1}^n  \left\|F_{h,q_1}(t_{n})^{-1} F_{h,\tau}^{j}(q_1)\right\|\left\|A_h(q_1)^{-1} P_h\left[t_{n-j}\partial_\tau\bar\omega^{n+1-j}+ \bar\omega^{n+1-j}\right]\right\|_{L^2\II}
       \\\le& c\|q_2-q_1\|_{L^2\II}\tau \sum_{j=1}^n \left(1+t_{j}^{-\al}t_n^{\al}\right)\left\|t_{n-j}\partial_\tau  u_h^{n+1-j}(q_2,\bar v_2)+  u_h^{n+1-j}(q_2,\bar v_2)+D_h\right\|_{L^2\II}\\ \le &\bar c_{0,\bar v_2,f,D}t_n\|q_2-q_1\|_{L^2\II}.
\end{align*}
Hence  for any $n>0$,  using the identity $\partial_\tau  (t_{n+1}\phi_h^n)=\phi^n_h+t_n\partial_\tau \phi_h^n $ along with the triangle inequality yields
\begin{align*}
    &t_n\left\| (A_h(q_1)F_{h,q_1}(t_n))^{-1} \partial_\tau  \phi_h^n\right \|_{L^2(\Omega)} \\
    & \le\left\|( A_h(q_1)F_{h,q_1}(t_n))^{-1}\partial_\tau(t_{n+1} \phi_h^n ) \right\|_{L^2(\Omega)} +\left \|\left (A_h(q_1)F_{h,q_1}(t_n)\right)^{-1} \phi_h^n \right\|_{L^2(\Omega)} \\
    &\le \bar c_{0,\bar v_2,f,D}t_n\| q_2-q_1\|_{L^2\II}.
\end{align*}
Therefore, we can derive the estimate of $ \partial_\tau  \phi_h^n$ as
\begin{align*}
  \left \| \partial_\tau  \phi_h^n \right\|_{L^2\II}
 & \le \left\|A_h(q_1)F_{h,q_1}(t_n)\right\|\left \| (A_h(q_1)F_{h,q_1}(t_n))^{-1} \partial_\tau  \phi_h^n \right\|_{L^2(\Omega)}\\
 & \le  \bar c_{0,\bar v_2,f,D}t_n^{-\al}\| q_2-q_1\|_{L^2\II}.
\end{align*}
 This completes the proof of the lemma.
\end{proof}

\noindent\textbf{B. Proof of Lemma \ref{lem:stab-0_reg_nois}} 
\begin{proof} From the representation \eqref{solu:ugamdel_fully}, there holds 
   \begin{align*}
     u_{\gamma,h}^{\delta,0}(q_1) - u_{\gamma,h}^{\delta,0}(q_2)
    = &  \left(F_{h,\tau}^{N_1}(q_2)-F_{h,\tau}^{N_1}(q_1)\right)\left(\gamma I+F_{h,\tau}^{N_1}(q_1)\right)^{-1} u_{\gamma,h}^{\delta,0}(q_2)
   \\& +\left(\gamma I+F_{h,\tau}^{N_1}(q_1)\right)^{-1}\bigg\{ \left(I- F_{h,\tau}^{N_1}(q_2)\right) A_h(q_2)^{-1} P_h \left[f -q_2D_h\right]\\&\quad\quad-\left(I- F_{h,\tau}^{N_1}(q_1)\right) A_h(q_1)^{-1}P_h \left[f -q_1D_h\right]\bigg\}=:{\rm II}_{h,\tau,1}^{N_1}+{\rm II}_{h,\tau,2}^{N_1}.
   \end{align*} 
 Let  ${\rm II}_{h,\tau,1}^{n,i}=F_{h,\tau}^{n}(q_i)\left(\gamma I+F_{h,\tau}^{N_1}(q_1)\right)^{-1} u_{\gamma,h}^{\delta,0}(q_2)$, $i=1,2$.
Then  ${\rm II}_{h,\tau,1}^{N_1}$ solves
 \begin{align*}
      \bDal {\rm II}_{h,\tau,1}^{n}+ A_h(q_2) {\rm II}_{h,\tau,1}^{n}=P_h\lsb (q_1-q_2){\rm II}_{h,\tau,1}^{n,1} \rsb\quad \text{with}\quad  {\rm II}_{h,\tau,1}^{0}=0.
 \end{align*}
 Employing solution representation   \eqref{eq:repomega-fully}  gives 
 \begin{align*}
     {\rm II}_{h,\tau,1}^{N_1}
      =&\tau \sum_{j=1}^{\lceil\frac{N_1}{2}\rceil-1}A_h(q_2)E_{h,\tau}^{N_1-j}(q_2)A_h(q_2)^{-1}P_h\left[(q_1-q_2){\rm II}_{h,\tau,1}^{j,1}\right ]\\&+\tau \sum_{j=\lceil\frac{N_1}{2}\rceil}^{N_1}A_h(q_2)E_{h,\tau}^{N_1-j}(q_2)A_h(q_2)^{-1}P_h\left[(q_1-q_2){\rm II}_{h,\tau,1}^{j,1}\right ]=:  {\rm II}_{h,\tau,1,1}^{N_1}+{\rm II}_{h,\tau,1,2}^{N_1}.
 \end{align*}
 Lemma~\ref{lem:op-fully} and Lemma~\ref{Lem:sobembdis} imply
 \begin{align*}
     \lnorm {\rm II}_{h,\tau,1,1}^{N_1}\rnorm_{L^2\II}\le&\tau \sum_{j=1}^{\lceil\frac{N_1}{2}\rceil-1} \left\|A_h(q_2)E_{h,\tau}^{N_1-j}(q_2)\right\|\left\|A_h(q_2)^{-1}P_h\left[(q_1-q_2){\rm II}_{h,\tau,1}^{j,1}\right ]\right\|_{L^2\II}\\
     \le &c\|q_1-q_2\|_{L^2\II}\tau \sum_{j=1}^{\lceil\frac{N_1}{2}\rceil-1}t_{N_1-j+1}^{-1}\left\|{\rm II}_{h,\tau,1}^{j,1}\right\|_{L^2\II}\\
      \le& c\|q_1-q_2\|_{L^2\II}\tau \sum_{j=1}^{\lceil\frac{N_1}{2}\rceil-1}t_{N_1-j+1}^{-1}(1+t_j^{-\al}T_1^\al)\left\| u_{\gamma,h}^{\delta,0}(q_2)\right\|_{L^2\II}\\
     \le &c\left\| u_{\gamma,h}^{\delta,0}(q_2)\right\|_{L^2\II}\|q_1-q_2\|_{L^2\II}.
 \end{align*}
 Applying the identity  $A_h(q)E_{h,\tau}^n(q)=-\partial_\tau F_{h,\tau}^{n+1}(q)$ to $ {\rm II}_{h,\tau,1,2}^{N_1}$ yields
 \begin{align*}
     {\rm II}_{h,\tau,1,2}^{N_1}
     =&\tau \sum_{j=\lceil\frac{N_1}{2}\rceil}^{N_1} F_{h,\tau}^{N_1-j+1}(q_2)A_h(q_2)^{-1}P_h\left[(q_1-q_2)\partial_\tau {\rm II}_{h,\tau,1}^{j,1}\right ]\\&+ F_{h,\tau}^{\lfloor\frac{N_1}{2}\rfloor}(q_2)A_h(q_2)^{-1}P_h\left[(q_1-q_2){\rm II}_{h,\tau,1}^{{\lceil\frac{N_1}{2}\rceil},1}\right ]-A_h(q_2)^{-1}P_h\left[(q_1-q_2) {\rm II}_{h,\tau,1}^{N_1,1}\right ].
 \end{align*}
 Therefore, from  Lemma~\ref{lem:op-fully}, we can derive that  
 \begin{align*}
      \lnorm{\rm II}_{h,\tau,1,2}^{N_1}\rnorm_{L^2\II}\le& c\|q_1-q_2\|_{L^2\II}(1+\tau \sum_{j=\lceil\frac{N_1}{2}\rceil}^{N_1}t_j^{-1}(1+t_j^{-\al}T_1^{\al}))\left\| u_{\gamma,h}^{\delta,0}(q_2)\right\|_{L^2\II}\\\le&  c\left\| u_{\gamma,h}^{\delta,0}(q_2)\right\|_{L^2\II}\|q_1-q_2\|_{L^2\II}.
 \end{align*}
 Similarly, let $ {\rm   II}_{h,\tau,2}^{n,i}=(I- F_{h,\tau}^{n}(q_i)) A_h(q_i)^{-1}  P_h [f -q_iD_h]$, $i=1,2$. Then   ${\rm   \bar {II}}_{h,\tau,2}^{N_1}={\rm   II}_{h,\tau,2}^{N_1,1}-{\rm   II}_{h,\tau,2}^{N_1,2}$ solves
 \begin{align*}
     \bDal {\rm   \bar {II}}_{h,\tau,2}^{n}+A_h(q_1) {\rm   \bar {II}}_{h,\tau,2}^{n}= P_h[ (q_2-q_1)({\rm   II}_{h,\tau,2}^{n,2}+D_h)]\quad \text{with}\quad   {\rm   \bar {II}}_{h,\tau,2}^{0}=0.
 \end{align*}
Using solution representation and  the identities  $  {\rm   II}_{h,\tau,2}^{N_1}=-\lb \gamma I+F_{h,\tau}^{N_1}(q_1)\rb^{-1} {\rm   \bar {II}}_{h,\tau,2}^{N_1}$ and $A_h(q)E_{h,\tau}^{n}(q) = - \partial_\tau F_{h,\tau}^{n+1}(q)$   leads to 
 \begin{align*}
    {\rm   II}_{h,\tau,2}^{N_1}
    =&\left(A_h(q_1)\lb \gamma I+F_{h,\tau}^{N_1}(q_1)\rb\right)^{-1}\tau \sum_{j=1}^{N_1} \left(\partial_\tau F_{h,\tau}^{N_1-j+1}(q_1)P_h {\rm \bar {II}}^j\right)\\=&\left(A_h(q_1)\lb \gamma I+F_{h,\tau}^{N_1}(q_1)\rb\right)^{-1}\partial_\tau\left(\tau \sum_{j=1}^{N_1}  F_{h,\tau}^{N_1-j+1}(q_1)P_h {\rm \bar {II}}^j\right)-\left(A_h(q_1)\lb \gamma I+F_{h,\tau}^{N_1}(q_1)\rb\right)^{-1}P_h {\rm \bar {II}}^{N_1}\\=:& {\rm   II}_{h,\tau,2,1}^{N_1}+ {\rm   II}_{h,\tau,2,2}^{N_1}, 
 \end{align*}
 where $ {\rm \bar {II}}^j=(q_2-q_1)( {\rm   II}_{h,\tau,2}^{j,2}+D_h)$. 
 From Lemma~\ref{lem:Dal-uhn}, it holds that
 \begin{align*}
     \lnorm{\rm   II}_{h,\tau,2,1}^{N_1}\rnorm_{L^2\II}&\le\left\|\left(A_h(q_1)F_{h,\tau}^{N_1}(q_1)\right)^{-1}\partial_\tau\left(\tau \sum_{j=1}^{N_1}  F_{h,\tau}^{N_1-j+1}(q_1)P_h{\rm \bar {II}}^j\right)\right\|_{L^2\II}\le  c_{f,D}\|q_1-q_2\|_{L^2\II}.
 \end{align*}
 Additionally,      Lemma~\ref{lem:op-fully} and Lemma~\ref{Lem:sobembdis} give
 \begin{align*}
    &\quad\lnorm{\rm   II}_{h,\tau,2,2}^{N_1}\rnorm_{L^2\II}\\&\le \left\|\left(A_h(q_1) F_{h,\tau}^{N_1}(q_1)\right)^{-1}P_h{\rm \bar {II}}^{N_1}\right\|_{L^2\II}\le c(1+T_1^{\al})\|q_2-q_1\|_{L^2\II}\left(\left\|{\rm   II}_{h,\tau,2}^{N_1,2}\right\|_{L^\infty\II}+\left\|D_h\right\|_{L^\infty\II}\right)
\\&\le c(1+T_1^{\al})\|q_2-q_1\|_{L^2\II}\left(\left\|A_h(q_2){\rm  II}_{h,\tau,2}^{N_1,2}\right\|_{L^2\II}+\left\|D_h\right\|_{L^\infty\II}\right)\le  c_{f,D}(1+T_1^{\al})\|q_2-q_1\|_{L^2\II}.
 \end{align*}
 This completes the proof of the lemma from the   following \textsl{a priori} estimate  of  $\left\| u_{\gamma,h}^{\delta,0}(q_2)\right\|_{L^2\II}$ in Lemma~\ref{lem:pfopri}.
\end{proof}
 \begin{lemma}\label{lem:pfopri}  
For  given $f, q, b$, $g_1$ and any $q\in \A$, suppose that  $\gamma^{-1}(\delta+h^2)\le c$ for some constant $c$.  Then it holds that $$ \lnorm u _{\gamma,h}^{\delta,0}(q)\rnorm_{L^2\II}\le c_{g_1,f,D}(1+T_1^\al).$$
\end{lemma}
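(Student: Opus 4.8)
The plan is to bound $u_{\gamma,h}^{\delta,0}(q)$ directly from its closed form \eqref{solu:ugamdel_fully}. Writing $F := F_{h,\tau}^{N_1}(q)$ and $A := A_h(q)$ for brevity, I would split
\[
u_{\gamma,h}^{\delta,0}(q) = (\gamma I + F)^{-1}P_h(g_1^\delta - D_h) - (\gamma I + F)^{-1}(I-F)A^{-1}P_h[f - qD_h] =: {\rm J}_a + {\rm J}_b .
\]
The one ingredient that produces the advertised factor $(1+T_1^\al)$ is the discrete smoothing estimate $\lnorm(\gamma I + F)^{-1}v_h\rnorm_{L^2\II} \le c_0(1 + T_1^\al)\lnorm v_h\rnorm_{\dot H^2\II}$ for $v_h \in X_h^0$, which I would establish first; the rest follows by splitting the data into a smooth part plus noise and mesh perturbations, each of which is controlled by the hypothesis $\gamma^{-1}(\delta+h^2)\le c$.

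For the smoothing estimate, recall from the proof of Lemma~\ref{lem:op-fully} that $F$ is positive and commutes with $A$, since both act through the common eigenbasis $\{\fy_n^h(q)\}$ with eigenvalues $F_\tau^{N_1}(\lambda)>0$. Hence for $\gamma\ge0$ the termwise inequality $(\gamma + F_\tau^{N_1}(\lambda))^{-1}\le F_\tau^{N_1}(\lambda)^{-1}$ gives $\lnorm(\gamma I + F)^{-1}v_h\rnorm_{L^2\II} \le \lnorm F^{-1}v_h\rnorm_{L^2\II} = \lnorm (AF)^{-1}Av_h\rnorm_{L^2\II} \le \lnorm(AF)^{-1}\rnorm\,\lnorm A v_h\rnorm_{L^2\II}$, and the operator bound $\lnorm(A_h(q)F_{h,\tau}^{N_1}(q))^{-1}\rnorm \le c_0(1+T_1^\al)$ from Lemma~\ref{lem:op-fully}(i) closes it. I would also record the crude bound $\lnorm(\gamma I + F)^{-1}\rnorm \le \gamma^{-1}$ (the smallest eigenvalue of $\gamma I + F$ is $\ge\gamma$) for the rough contributions.

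For ${\rm J}_a$ I would split $g_1^\delta - D_h = (g_1^\delta - g_1) + \bar g_1 + (D - D_h)$, with $\bar g_1 = g_1 - D$. The noise piece is bounded by $\gamma^{-1}\lnorm g_1^\delta - g_1\rnorm_{L^2\II}\le\gamma^{-1}\delta\le c$ and the geometry piece by $\gamma^{-1}\lnorm D - D_h\rnorm_{L^2\II}\le c\gamma^{-1}h^2\le c$. For the smooth piece I would further write $P_h\bar g_1 = R_h(q)\bar g_1 + (P_h - R_h(q))\bar g_1$; the identity $A_h(q)R_h(q)\bar g_1 = P_h A_q \bar g_1$ yields $\lnorm R_h(q)\bar g_1\rnorm_{\dot H^2\II}\le\lnorm\bar g_1\rnorm_{\dot H^2\II}$, so the smoothing estimate controls it by $c(1+T_1^\al)\lnorm\bar g_1\rnorm_{\dot H^2\II}$ (here $\bar g_1 = g_1 - D\in\dot H^2\II\cap H_0^1\II$), while the remainder is bounded by $\gamma^{-1}\lnorm(P_h - R_h(q))\bar g_1\rnorm_{L^2\II}\le c\gamma^{-1}h^2\le c$ via \eqref{ph-bound}. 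For ${\rm J}_b$, I would set $\eta_h = A^{-1}P_h[f - qD_h]\in X_h^0$; then $\lnorm\eta_h\rnorm_{\dot H^2\II}=\lnorm P_h[f - qD_h]\rnorm_{L^2\II}\le c_{f,D}$ (using $0\le q\le m$) and $\lnorm\eta_h\rnorm_{L^2\II}\le c_{f,D}$ by Lemma~\ref{Lem:sobembdis}. Writing $(I-F)\eta_h = \eta_h - F\eta_h$, the smoothing estimate handles the first term and $\lnorm(\gamma I + F)^{-1}F\rnorm\le1$ the second, giving ${\rm J}_b\le c_{f,D}(1+T_1^\al)$.

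The main obstacle is the bookkeeping of the factor $(1+T_1^\al)$: every potentially unbounded contribution must be routed through the sharp operator norm $\lnorm(A_h(q)F_{h,\tau}^{N_1}(q))^{-1}\rnorm\le c_0(1+T_1^\al)$ rather than through a generic $\gamma^{-1}$, and the use of $R_h(q)$ (not $P_h$) in splitting $\bar g_1$ is precisely what keeps the smooth piece in the discrete $\dot H^2\II$ norm with a $q$-independent constant while avoiding any inverse inequality. Collecting the five contributions then gives $\lnorm u_{\gamma,h}^{\delta,0}(q)\rnorm_{L^2\II}\le c_{g_1,f,D}(1+T_1^\al)$, as claimed.
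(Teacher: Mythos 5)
Your proposal is correct and follows essentially the same route as the paper: the paper likewise splits the representation \eqref{solu:ugamdel_fully} into a noise/geometry piece controlled by $\gamma^{-1}(\delta+h^2)\le c$, a Ritz-projection remainder $(P_h-R_h(q))\bar g_1$ of size $\gamma^{-1}h^2\|\bar g_1\|_{\dot H^2\II}$, and the smooth data and source contributions routed through $\|(A_h(q)F_{h,\tau}^{N_1}(q))^{-1}\|\le c_0(1+T_1^\alpha)$ together with $A_h(q)R_h(q)\bar g_1=P_hA_q\bar g_1$. Your write-up merely makes explicit the spectral facts ($(\gamma I+F)^{-1}\preceq F^{-1}$, $\|(\gamma I+F)^{-1}F\|\le 1$) that the paper uses implicitly.
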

\begin{proof} Form equation \eqref{eqn:udisdeelayh}, there holds
   \begin{equation*}
   \begin{aligned}
  \lnorm u _{\gamma,h}^{\delta,0}(q)\rnorm_{L^2\II}\le &\lnorm(\gamma I+F_{h,\tau}^{N_1}(q))^{-1}(P_h( g_1^\delta-D_h)-P_h (g_1-D))\rnorm_{L^2\II}  \\& +\lnorm(\gamma I+F_{h,\tau}^{N_1}(q))^{-1}(P_h-R_h(q))\bar g_1\rnorm_{L^2\II} + \lnorm\big(A_h(q)(\gamma I+F_{h,\tau}^{N_1}(q))\big)^{-1}A_h(q)R_h(q)\bar g_1\rnorm_{L^2\II}\\&+ \lnorm(A_h(q)(\gamma I+F_{h,\tau}^{N_1}(q)))^{-1}(I-F_{h,\tau}^{N_1}(q)) P_h [f - qD_h]\rnorm_{L^2\II}\\\le&c (\gamma^{-1}\delta+\gamma^{-1}h^2)+c\gamma^{-1}h^2\|\bar g_1\|_{\dot H^2\II}+c(1+T_1^\al)(\| \bar  g_1\|_{\dot H^2\II}+\|f\|_{L^2\II}+\|D\|_{L^2\II})\\
  \le& c_{g_1,f,D}(1+T_1^\al),
\end{aligned}   
\end{equation*}
where we use the fact that
$$ \|P_h( g_1^\delta-D_h)-P_h (g_1-D) \|_{L^2\II}\le \|g_1^\delta-g\|_{L^2\II}+\|D_h-D\|_{L^2\II}\le \delta+ch^2.$$
\end{proof}


\bibliographystyle{abbrv}
\bibliography{bibfile}
\end{document}